\newcommand{\eps}{\varepsilon}
\newcommand{\bc}{\begin{center}}
\newcommand{\ec}{\end{center}}
\newtheorem{thm}{Theorem}[section]
\newtheorem{lem}[thm]{Lemma}
\newtheorem{prop}[thm]{Proposition}
\newtheorem{cor}[thm]{Corollary}
\newtheorem{fact}[thm]{Fact}
\theoremstyle{definition}
\newtheorem{que}[thm]{Question}
\newtheorem{prob}[thm]{Problem}
\newtheorem{exa}[thm]{Example}
\newtheorem{rem}[thm]{Remark}
\title[On sequences of homomorphisms into measure algebras]{On sequences of homomorphisms into measure algebras and the Efimov problem}
\author{Piotr Borodulin--Nadzieja}
\address[Piotr Borodulin-Nadzieja]{Instytut Matematyczny, Uniwersytet Wroc\l awski, \ \ \  pl. Grunwaldzki 2/4, 50-384 Wroc\l aw, Poland}
\email{pborod@math.uni.wroc.pl}
\author{Damian Sobota}
\address[Damian Sobota]{Kurt G\"odel Research Center for Mathematical Logic, Departament of Mathematics, University of Vienna, Kolingasse 14-16, 1090 Vienna, Austria}
\email{damian.sobota@univie.ac.at}
\thanks{The first author was supported by 
 National Science Center project no. 2018/29/B/ST1/00223. The second author was supported by the Austrian Science Fund, grant M2500-N35.}
\subjclass[2020]{Primary: 28A20, 28A60, 03E40. Secondary: 28E15, 03E75, 28A33}
\keywords{Efimov problem, measure algebras, weak convergence of measures, algebraic convergence, random forcing, perfect Hamming codes}
\begin{document}

\maketitle

\begin{abstract}
For given Boolean algebras $\mathbb{A}$ and $\mathbb{B}$ we endow the space $\mathcal{H}(\mathbb{A},\mathbb{B})$ of all Boolean homomorphisms from $\mathbb{A}$ to $\mathbb{B}$ with various topologies and study convergence properties of sequences in $\mathcal{H}(\mathbb{A},\mathbb{B})$. We are in particular interested in the situation when $\mathbb{B}$ is a measure algebra as in this case we obtain a natural tool for studying topological convergence properties of sequences of ultrafilters on $\mathbb{A}$ in random extensions of the set-theoretical universe. This appears to have strong connections with Dow and Fremlin's result stating that there are Efimov spaces in the random model. We also investigate relations between topologies on $\mathcal{H}(\mathbb{A},\mathbb{B})$ for a Boolean algebra $\mathbb{B}$ carrying a strictly positive measure and convergence properties of sequences of measures on $\mathbb{A}$.

\end{abstract}

\section{Introduction}

It is a common issue in analysis and topology to compare convergence properties of sequences in topological spaces, in particular, to study when a sequence convergent with respect to one topology converges also with respect to another one. For
instance, one can ask what properties a Banach space must have so that its every weakly convergent sequence is also norm convergent (studying so called Schur property), or investigate when a given topological space does not contain non-trivial convergent sequences, i.e. when every convergent sequence is eventually constant. In this paper we also deal with this kind of problems---our setting are spaces of homomorphisms between Boolean algebras, especially measure algebras, endowed with several miscellaneous topologies. 

There are several motivations standing behind our research, but before we present them, together with our main results, we introduce some basic notations. 
For a cardinal $\kappa$ let $\mathbb{M}_\kappa$ denote the measure algebra of the standard product measure $\lambda_\kappa$ on the product space $2^\kappa$. If $\kappa=1$, then put $\mathbbm{2}=\mathbb{M}_1$. If $\mathbb{A}$ is a Boolean algebra, then by $\mathcal{H}(\mathbb{A},\mathbb{M}_\kappa)$ we denote the family of all Boolean homomorphisms from $\mathbb{A}$ into $\mathbb{M}_\kappa$. The measure $\lambda_\kappa$ induces the so-called Fr\'echet--Nikodym metric $d_{\lambda_\kappa}$ on $\mathbb{M}_\kappa$, and thus $\mathbb{M}_\kappa$ may be treated as a metric topological space. Since $\mathcal{H}(\mathbb{A},\mathbb{M}_\kappa)$ is simply a subset of the family of all functions from $\mathbb{A}$ into $\mathbb{M}_\kappa$, this further leads to endowing $\mathcal{H}(\mathbb{A},\mathbb{M}_\kappa)$ with two natural topologies---\textit{the pointwise topology} and \textit{the uniform topology}. 
In this article we will investigate these structures and their variations as well as the relations between them.

Our first motivation for this research is the observation that for $\kappa>1$ the space $\mathcal{H}(\mathbb{A},\mathbb{M}_\kappa)$ with the pointwise topology may be treated as a natural generalization of the Stone space of the Boolean algebra
$\mathbb{A}$, or a Stone space of $\mathbb{A}$ of \textit{higher} order. Indeed, recall that every ultrafilter on the Boolean algebra $\mathbb{A}$ induces a homomorphism from $\mathbb{A}$ into the two-element algebra $\mathbbm{2}$, and \textit{vice
versa}, and thus the Stone space $St(\mathbb{A})$ of $\mathbb{A}$ may be seen as the space $\mathcal{H}(\mathbb{A},\mathbbm{2})$, endowed with the pointwise topology originating from the Fr\'echet--Nikodym metric $d_{\lambda_1}$, induced by the
trivial measure $\lambda_1$ on $\mathbbm{2}$. As the literature concerning convergence properties of sequences in Stone spaces of Boolean algebras, or more generally in totally disconnected topological spaces, is quite vast, cf. e.g. \cite{BrianDow}, \cite{KPHart}, \cite{HruShi}, we were motivated to conduct similar research in the setting of the spaces $\mathcal{H}(\mathbb{A},\mathbb{M}_\kappa)$ for general $\kappa$'s and various topologies. Note that similar generalizations of topological spaces are quite common in analysis, e.g. one can think of Radon measures of norm $1$ on compact spaces as generalizations of points in those spaces, or, as it is done in non-commutative topology, of projections in general C*-algebras as being analogous to characteristic functions of clopen subsets of locally compact spaces.

For every $\kappa$ the uniform topology on the space $\mathcal{H}(\mathbb{A},\mathbb{M}_\kappa)$ is of course finer than the pointwise topology, hence every uniformly convergent sequence in $\mathcal{H}(\mathbb{A},\mathbb{M}_\kappa)$ is pointwise
metric convergent. As one can suspect, the converse in general does not hold---cf. Section \ref{sec-examples} for relevant counterexamples (especially interesting is the example described in Section \ref{ex_borel_not_unif_alg}, to obtain which we used tools from information theory such as perfect Hamming codes). In Section \ref{sec-algebraic_conv} we introduce and study a strengthening of pointwise metric
convergence---called by us \textit{the pointwise algebraic convergence}---that implies under certain conditions put on the Boolean algebra $\mathbb{A}$ (such as $\sigma$-completeness) also the uniform convergence (Corollary \ref{alg_impl_unif}). This
mode of convergence is related to the well-known notion of the algebraic convergence in $\sigma$-complete Boolean algebras, studied e.g. in \cite{Glowczynski}, \cite{BalFraHru}, \cite{JechSub}.

In Section \ref{sec-borel_conv}, using the dual functions to homomorphisms from $\mathbb{A}$ into $\mathbb{M}_\kappa$ and Borel subsets of the Stone space $\mathbb{A}$, we endow the space $\mathcal{H}(\mathbb{A},\mathbb{M}_\kappa)$ with yet another
topology, which we call \textit{the pointwise Borel metric topology}. This topology is finer than the topology of pointwise convergence and has turned out to be useful in investigations related to our second motivation which is the fact that each homomorphism $\varphi\in \mathcal{H}(\mathbb{A},\mathbb{M}_\kappa)$ naturally induces a measure on $\mathbb{A}$ of the form $\lambda_\kappa \circ \varphi$, as well as, by the virtue of the celebrated Maharam theorem, for each strictly positive measure $\mu$ on $\mathbb{A}$ there are a cardinal $\gamma$ and a (usually not unique) homomorphism $\psi\in\mathcal{H}(\mathbb{A},\mathbb{M}_\gamma)$  such that $\mu=\psi\circ\lambda_\gamma$. This correspondence has inspired us to study relations between sequences $(\varphi_n)$ of homomorphisms in $\mathcal{H}(\mathbb{A},\mathbb{M}_\kappa)$ and sequences of measures of the form $\lambda_\kappa\circ\varphi_n$. In particular, we show in Corollary \ref{convergences_measures_implications} connections between the uniform convergence, pointwise Borel metric convergence, and pointwise metric convergence of a sequence $(\varphi_n)$ in $\mathcal{H}(\mathbb{A},\mathbb{M}_\kappa)$ and the norm convergence, weak convergence, and weak* convergence of the sequence $(\lambda_\kappa\circ\varphi_n)$ of measures on $\mathbb{A}$, respectively. In Section \ref{measures} we prove also that whenever $\mathbb{A}$ has the Grothendieck property (see the end of Section \ref{sec-top_measures} for the definition), then every pointwise metric convergent sequence in $\mathcal{H}(\mathbb{A},\mathbb{M}_\kappa)$ is pointwise Borel metric convergent---a fact that does not hold for general Boolean algebras $\mathbb{A}$ (cf. the example in Section \ref{ex_alg_not_borel}).

Our last main motivation lies in the theory of forcing, where actually our interest in studying the subject of this article started. Imagine that we force with a Boolean algebra $\mathbb{B}$ over the ground model $V$ and that we are interested in the behaviour of sequences of ultrafilters in the forcing extension $V^\mathbb{B}$ which are defined on a fixed ground model Boolean algebra $\mathbb{A}$. This is actually a quite common situation---e.g. reals in the extension can be seen as ultrafilters on the Cantor algebra $\mathbb{C}$, i.e. the free countable Boolean algebra. 
The usual problem is that the access to objects in forcing extensions is quite \emph{remote}, through $\mathbb{B}$-names only. However, 
the  $\mathbb{B}$-names for ultrafilters on $\mathbb{A}$ naturally correspond to homomorphisms $\varphi\colon \mathbb{A} \to \mathbb{B}$ (see Section \ref{sec-homo-names}). 
 Then, the convergence of ultrafilters (treated as elements of the Stone space of $\mathbb{A}$ in $V^\mathbb{B}$) appears to be strongly connected to certain convergence properties of sequences of homomorphisms in the ground model. Namely, we prove
 in Proposition \ref{convergence} that for a given sequence $(\dot{\mathcal{U}}_n)$ of $\mathbb{M}_\kappa$-names for ultrafilters on $\mathbb{A}$ the sequence of the corresponding homomorphisms $(\varphi_{\dot{\mathcal{U}}_n})$ in
 $\mathcal{H}(\mathbb{A},\mathbb{M}_\kappa)$ is algebraically convergent if and only if the sequence $((\dot{\mathcal{U}}_n)_G)$ of the interpretations converges in $V[G]$ for every $\mathbb{M}_\kappa$-generic filter $G$. It turns out that this
 proposition has also variants regarding uniform convergence of sequences of homomorphisms---Proposition \ref{uniform-convergence} asserts that if $((\dot{\mathcal{U}}_n)_G)$ is eventually constant in every generic extension $V[G]$, then the sequence
 $(\varphi_{\dot{\mathcal{U}}_n})$ is uniformly convergent. The converse holds partially---in Theorem \ref{uniform-convergence2} we show that if a sequence of homomorphisms $(\varphi_n)$ in $\mathcal{H}(\mathbb{A},\mathbb{M}_\kappa)$ is uniformly
 convergent to some $\varphi\in\mathcal{H}(\mathbb{A},\mathbb{M}_\kappa)$, then for almost all $n\in\omega$ and the corresponding names $\dot{\mathcal{U}}_{\varphi_n}$ and $\dot{\mathcal{U}}_\varphi$ for ultrafilters on $\mathbb{A}$ there is a
 condition $p_n\in\mathbb{M}_\kappa$ forcing that $\dot{\mathcal{U}}_{\varphi_n}=\dot{\mathcal{U}}_\varphi$. This latter theorem is proved with the aid of Theorem \ref{distinct} asserting that if for two $\mathbb{M}_\kappa$-names $\dot{\mathcal{U}}$
 and $\dot{\mathcal{V}}$ for ultrafilters on $\mathbb{A}$ it holds $\Vdash_{\mathbb{M}_\kappa}\dot{\mathcal{U}}\neq\dot{\mathcal{V}}$, then there exists a \textit{large} condition $p\in\mathbb{M}_\kappa$, where \textit{large} means \textit{of measure
 as close to $1/4$ as one wishes}, and an element $A\in\mathbb{A}$ such that $p\Vdash A\in\dot{\mathcal{U}}\triangle\dot{\mathcal{V}}$---a result interesting on its own, since a typical argument based on the countable chain condition of
 $\mathbb{M}_\kappa$ provides us only with a countable antichain $\{p_n\colon\ n\in\omega\}$ of conditions witnessing that the two ultrafilters are different, but with no control over the value of $\lambda_\kappa(p_n)$ for any $n\in\omega$. The proof of Theorem \ref{distinct} is also interesting, because it boils down to purely combinatorial Proposition \ref{malowanie-plotu-new} having a nice real-life interpretation, see Remark \ref{rem-malowanie-plotu}.

The aforementioned results have connection with the famous long-standing open question, called \textit{the Efimov problem}, asking whether there exists \textit{an Efimov space}, that is, an infinite compact Hausdorff space which does not contain
copies of $\beta\omega$, the \v{C}ech--Stone compactification of $\omega$, nor non-trivial convergent sequences. No ZFC example of an Efimov space is known, however several consistent examples have been obtained either under some additional
set-theoretic assumptions, e.g. by Fedorchuk \cite{Fedorchuk76} (under Jensen's diamond principle), Dow and Pichardo-Mendoza \cite{DowPichardo} (under the Continuum Hypothesis), Dow and Shelah \cite{Dow-Shelah} (under Martin's axiom), or using forcing, see e.g. Sobota and Zdomskyy
\cite{Sobota-Zdomsky} or Dow and
Fremlin \cite{DowFremlin}. Results in the latter paper are of particular interest to us as they concern forcing with $\mathbb{M}_\kappa$; namely, Dow and Fremlin proved that if $\mathbb{A}$ is a ground model Boolean algebra such that its Stone space is an F-space, which
occurs e.g. in the case of $\sigma$-complete Boolean algebras or the algebra $\mathcal{P}(\omega)/Fin$, then, in $V^{\mathbb{M}_\kappa}$, the Stone space $St(\mathbb{A})$ does not have any non-trivial convergent sequences (which yields, e.g., that if
$V$ is a model of set theory satisfying the Continuum Hypothesis, then $St(\mathcal{P}(\omega)\cap V)$ is an Efimov space in any $\mathbb{M}_{\omega_2}$-generic extension of $V$). In Section \ref{secEfimov} we prove that $\mathbb{M}_\kappa$ forces
that the Stone space of a given ground model Boolean algebra $\mathbb{A}$ does not contain any non-trivial convergent sequences if and only if in $V$ every pointwise algebraically convergent sequence in $\mathcal{H}(\mathbb{A},\mathbb{M}_\kappa)$ is
uniformly convergent (Theorem \ref{translation}). This, together with Dow and Fremlin's theorem, implies that if the Stone space of an infinite Boolean algebra $\mathbb{A}$ is an F-space, then every pointwise algebraically convergent sequence in
$\mathcal{H}(\mathbb{A},\mathbb{M}_\kappa)$ is uniformly convergent---note that this result does not hold for 
'simple' Boolean algebras such as the Cantor algebra, see the example in Section \ref{ex_borel_alg_not_unif}.



\medskip

The structure of the paper is as follows. In the next section we present basic notations, terminology and facts used in the paper. In Section \ref{sec-convergence} we introduce four types of topologies and modes of convergence of sequences of homomorphisms, mostly into measure algebras. Section \ref{measures} is devoted to study relations between convergence properties of sequences $(\varphi_n)$ of homomorphisms into a Boolean algebra carrying a strictly positive measure $\mu$ and convergence properties of sequences of measures of the form $(\mu\circ\varphi_n)$. In Section \ref{sec-examples} we present a series of examples of sequences of homomorphisms between Boolean algebras being convergent with respect to one topology but not with respect to another one. In Section \ref{sec-homo-names} we study relations between various types of convergence of sequences of homomorphisms into measure algebras and convergence of corresponding ultrafilters in random generic extensions of the ground model. This study is continued in Section \ref{secEfimov}, where we characterize those Boolean algebras whose Stone spaces contain no non-trivial convergent sequences in random extensions with the aid of convergence properties of sequences of homomorphisms in the ground model. The last section provides several open questions and problems. 

\section{Acknowledgements}

We would like to thank Grzegorz Plebanek and Krzysztof Majcher for valuable discussions concerning homomorphisms, Hamming codes and painting fences. We also thank Lyubomyr Zdomskyy for reading the first draft of the paper and sharing with us many relevant comments.

\section{Notations and terminology}\label{preliminaries}

All \textit{compact} spaces considered in the paper are assumed to be Hausdorff. If $K$ is a compact space, then by $Clopen(K)$ and $Bor(K)$ we denote the Boolean algebra of clopen subsets of $K$ and the $\sigma$-field of all Borel subsets of $K$, respectively. If $\mathbb{A}$ is a Boolean algebra, then by $St(\mathbb{A})$ we denote its Stone space. Note that $\mathbb{A}$ and $Clopen(St(\mathbb{A}))$ are isomorphic.

Unless otherwise stated, all \textit{measures} considered by us are \textit{probability} measures. \textit{A measure on a Boolean algebra} is always meant to be finitely additive. On the other hand, \textit{a measure on a compact space} is always a Radon measure, i.e. it is countably additive, Borel and inner regular with respect to compact subsets. Note that every measure $\mu$ on a Boolean algebra $\mathbb{A}$ has a unique extension to a Radon measure $\widehat{\mu}$ on the Stone space $St(\mathbb{A})$, i.e. $\mu=\widehat{\mu}\restriction Clopen(St(\mathbb{A}))$, where $\mathbb{A}$ is identified with $Clopen(St(\mathbb{A}))$. When there should be no confusion, we will usully drop $\ \widehat{ }\ $ and write simply $\mu$ for $\widehat{\mu}$.

If $\mathbb{A}$ and $\mathbb{B}$ are two Boolean algebras, then $\mathcal{H}(\mathbb{A},\mathbb{B})$ denotes the family of all homomorphisms from $\mathbb{A}$ to $\mathbb{B}$. In Section \ref{sec-convergence} we will endow $\mathcal{H}(\mathbb{A},\mathbb{B})$ with various topologies and consider various types of convergences of sequences in $\mathcal{H}(\mathbb{A},\mathbb{B})$. If $\varphi\in\mathcal{H}(\mathbb{A},\mathbb{B})$, then the function $f_\varphi\colon St(\mathbb{B})\to St(\mathbb{A})$ defined as $f_\varphi(x)=\varphi^{-1}[x]$ is continuous (here $x$ is an ultrafilter on $\mathbb{B}$). On the other hand, if $f\colon St(\mathbb{B})\to St(\mathbb{A})$ is a continuous function, then the function $\varphi_f\colon\mathbb{A}\to\mathbb{B}$ defined by the formula $\varphi_f(A)=f^{-1}[A]$ is a homomorphism between Boolean algebras $\mathbb{A}$ and $\mathbb{B}$. It follows that $f=f_{(\varphi_f)}$ and $\varphi=\varphi_{(f_\varphi)}$.

\subsection{Topologies on spaces of measures.}\label{sec-top_measures} Let $\mathbb{A}$ be a Boolean algebra and let $K$ be a compact space. 
\textit{The space of all measures} on $K$ is denoted by $P(K)$. By $C(K)$ we denote the Banach space of continuous real-valued functions on $K$ endowed with the supremum topology. If $\mu\in P(K)$ and $f\in C(K)$, then $\mu(f)$ is defined as the integral $\mu(f)=\int_Kfd\mu$. 

 We endow $P(K)$ with three topologies: the norm topology, the weak topology and the weak* topology. \textit{The norm topology} on $P(K)$ is induced by the variation metric $d_{var}$ on $P(K)$ defined by the formula
\[d_{var}(\mu,\nu)=\sup_{\substack{A,B\in Bor(K)\\A\cap B=\emptyset}}\big(|\mu(A)-\nu(A)|+|\mu(B)-\nu(B)|\big)\]
for every $\mu,\nu\in P(K)$. Note that if $K$ is totally-disconnected, then in the above formula for $d_{var}(\mu,\nu)$ we may confine ourselves only to pairs of disjoint clopen subsets of $K$. 

\textit{The weak topology} on the dual space $C(K)^*$ (and hence on $P(K)$) is the weakest topology which makes all functionals from $C(K)^*$ continuous and so it is induced by the subbase given by sets of the form
\[V(\mu,\varphi,\eps)=\big\{\nu\in P(K)\colon\ |\varphi(\mu) - \varphi(\nu)|<\eps\big\},\]
where $\mu\in P(K)$, $\varphi\in C(K)^{**}$ and $\eps>0$. Similarly, \textit{the weak* topology} on $P(K)$ is defined by sets of the form
\[V(\mu,f,\eps)=\big\{\nu\in P(K)\colon\ |\mu(f)-\nu(f)|<\eps\big\},\]
where $\mu\in P(K)$, $f\in C(K)$ and $\eps>0$. Recall that $P(K)$ with the weak* topology is a compact space. Of course, the weak* topology on $P(K)$ is weaker than the weak topology which is on the other hand weaker than the norm topology.

The following facts are well-known.

\begin{prop}Let $K$ be a totally disconnected compact space. Let $\mathcal{V}$ be the collection of the sets of the form: 
	\[ V(\mu, C, \varepsilon) = \big\{\nu \in P(K)\colon |\nu(C) - \mu(C)|<\varepsilon\big\}, \]
		where $\mu \in P(K)$, $C$ is a clopen subset of $K$ and $\varepsilon>0$. Then $\mathcal{V}$ is a subbase of the weak* topology on $P(K)$.
\end{prop}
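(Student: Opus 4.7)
The plan is to verify the two inclusions needed for $\mathcal{V}$ to be a subbase of the weak* topology on $P(K)$: namely, every set in $\mathcal{V}$ is weak* open, and every basic weak* open neighborhood of a point $\mu\in P(K)$ contains a finite intersection of sets from $\mathcal{V}$ around $\mu$.

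The first direction is essentially immediate. Since $K$ is totally disconnected, any clopen $C\subseteq K$ has continuous characteristic function $\mathbbm{1}_C\in C(K)$, and $\nu(C)=\nu(\mathbbm{1}_C)$ for every $\nu\in P(K)$. Hence $V(\mu,C,\varepsilon)=V(\mu,\mathbbm{1}_C,\varepsilon)$ is a subbasic weak* open set.

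For the converse, I will show that given $\mu\in P(K)$, $f\in C(K)$ and $\varepsilon>0$, one can find clopen sets $C_1,\ldots,C_n\subseteq K$ and $\delta>0$ such that $\bigcap_{i=1}^n V(\mu,C_i,\delta)\subseteq V(\mu,f,\varepsilon)$. The key ingredient is that continuous functions on a totally disconnected compact space are uniformly approximable by simple functions with clopen supports: using that the clopen sets form a base for $K$ and a standard compactness argument, one can partition $K$ into finitely many pairwise disjoint clopen pieces $C_1,\ldots,C_n$ such that $\mathrm{osc}(f\!\upharpoonright\! C_i)<\varepsilon/3$ for each $i$. Picking $a_i\in f[C_i]$ and setting $g=\sum_{i=1}^n a_i\mathbbm{1}_{C_i}$ yields $\|f-g\|_\infty<\varepsilon/3$, so for every $\nu\in P(K)$,
\[
|\nu(f)-\mu(f)|\le |\nu(f)-\nu(g)|+|\nu(g)-\mu(g)|+|\mu(g)-\mu(f)|<\tfrac{2\varepsilon}{3}+\sum_{i=1}^n|a_i|\cdot|\nu(C_i)-\mu(C_i)|.
\]
Taking $\delta=\varepsilon\big/\big(3\sum_{i=1}^n(|a_i|+1)\big)$, any $\nu$ belonging to $\bigcap_{i=1}^n V(\mu,C_i,\delta)$ satisfies $|\nu(f)-\mu(f)|<\varepsilon$, which gives the desired inclusion and completes the argument.

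I do not anticipate any genuine obstacle here; the only point that requires care is the existence of a clopen partition refining the oscillation condition on $f$, but this follows routinely from uniform continuity of $f$ on the compact space $K$ together with the fact that clopen sets form a base of the topology on $K$.
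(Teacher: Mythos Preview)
Your proof is correct. The paper itself does not supply a proof of this proposition: it is simply stated as one of ``the following facts [which] are well-known'' and left without argument. Your approach---observing that $\mathbbm{1}_C\in C(K)$ for clopen $C$, and then using uniform approximation of an arbitrary $f\in C(K)$ by a clopen-step function to trap $V(\mu,f,\varepsilon)$ between finite intersections of the sets $V(\mu,C_i,\delta)$---is exactly the standard argument one would expect, and all the estimates are handled carefully (including the ``$+1$'' in the denominator of $\delta$ to avoid division by zero).
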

%

\begin{prop}\cite[Theorem 11 in Section VII]{Diestel}\label{diestel-weak} Let $K$ be a compact space. Then the sequence $(\mu_n)$ of measures on $K$ converges weakly to some $\mu$ if and only if $\mu_n(B)$ converges to $\mu(B)$ for each Borel subset $B\subseteq K$.
\end{prop}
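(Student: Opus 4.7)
I would prove the two implications separately, treating the forward direction as essentially a definition-chase and the reverse direction as the substantial content.

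For the forward direction, fix a Borel set $B\subseteq K$ and consider the functional
\[F_B\colon M(K)\to\mathbb{R},\qquad F_B(\nu)=\nu(B).\]
Since $|F_B(\nu)|\leq \|\nu\|$, this is a bounded linear functional on $M(K)=C(K)^*$, hence $F_B\in C(K)^{**}$. Weak convergence $\mu_n\to\mu$ in $M(K)$ therefore gives $F_B(\mu_n)\to F_B(\mu)$, i.e.\ $\mu_n(B)\to\mu(B)$.

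For the reverse direction, assume $\mu_n(B)\to\mu(B)$ for every Borel $B\subseteq K$. The first step is to invoke the Vitali--Hahn--Saks theorem: setwise convergence on all Borel sets forces the sequence $(\mu_n)$ to be uniformly countably additive, i.e.\ for every decreasing sequence $B_k\downarrow\emptyset$ of Borel subsets of $K$, $\sup_n\mu_n(B_k)\to 0$. The second step is to show that this uniform countable additivity, combined with setwise convergence, yields $\int f\,d\mu_n\to\int f\,d\mu$ for every bounded Borel function $f$: approximate $f$ uniformly by simple Borel functions, apply setwise convergence on each level set, and use the uniform countable additivity to control the error uniformly in $n$. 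Finally, one uses that every $F\in C(K)^{**}$ can be represented as integration against a bounded (universally) measurable function, against which $\mu_n\to\mu$ by the previous step, so that $F(\mu_n)\to F(\mu)$, giving weak convergence.

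The main obstacle is the reverse direction, and within it, the step of going beyond convergence against continuous functions (which would yield only weak$^*$ convergence) to convergence against every functional in $C(K)^{**}$. The key device is the Vitali--Hahn--Saks theorem, which converts mere setwise convergence into uniform countable additivity, thereby enabling one to pass limits inside integrals against bounded Borel functions. Without this uniformity, mass of the $\mu_n$ could conceivably escape along an ever-shrinking family of sets, and setwise convergence on individual Borel sets alone would not suffice to imply the stronger weak convergence.
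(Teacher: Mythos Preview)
The paper does not prove this proposition; it merely cites it from Diestel's book. So there is no argument in the paper to compare against, and I will assess your sketch on its own merits.

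Your forward direction is fine. Your reverse direction is sound through step~2 (Vitali--Hahn--Saks / Nikodym giving uniform countable additivity, and then convergence of integrals of bounded Borel functions), but step~3 contains a genuine gap: it is \emph{not} true that every $F\in C(K)^{**}=M(K)^*$ is represented by integration against a bounded universally measurable function on $K$. The bidual $C(K)^{**}$ is a commutative $C^*$-algebra of the form $C(\widetilde K)$ for the hyperstonian envelope $\widetilde K$ of $K$, and the bounded Borel (or universally measurable) functions on $K$ embed into it as a proper subspace in general. So convergence against all bounded Borel functions does not by itself give convergence against every element of $C(K)^{**}$.

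The standard repair, which is essentially Diestel's argument, replaces your step~3 by a compactness step: uniform countable additivity (plus norm boundedness, which follows from Nikodym's boundedness theorem) makes $\{\mu_n:n\in\omega\}$ relatively weakly compact in $M(K)$ by the Bartle--Dunford--Schwartz / Dieudonn\'e--Grothendieck criterion. By Eberlein--\v{S}mulian, every subsequence has a further weakly convergent subsequence; the forward implication, together with your setwise hypothesis, forces the weak limit to be $\mu$. Since every subsequence has a further subsequence converging weakly to $\mu$, the whole sequence converges weakly to $\mu$.
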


We say that a Boolean algebra $\mathbb{A}$ has \textit{the Grothendieck property} if every weak* convergent sequence of (signed) Radon measures on $St(\mathbb{A})$ is weakly convergent. It is well-known that $\sigma$-complete Boolean algebras have the property, but countable ones do not (or more generally those algebras whose Stone spaces contain non-trivial convergent sequences).


\subsection{Measure algebras.} If $\mu$ is a measure on a Boolean algebra $\mathbb{A}$, then we put $\mathcal{N}_\mu=\big\{A\in\mathbb{A}\colon\ \mu(A)=0\big\}$. Similarly, if $\mu$ is a Radon measure on a compact space $K$, then we denote $\mathcal{N}_\mu=\big\{A\in Bor(K)\colon\ \mu(A)=0\big\}$. For an infinite cardinal number $\kappa$, by $\mathbb{M}_\kappa$ we will denote the (standard) measure algebra of Maharam type $\kappa$, i.e. $\mathbb{M}_\kappa=\mathrm{Bor}(2^\kappa)/\mathcal{N}_\kappa$, where $\mathcal{N}_\kappa$ is the $\sigma$-ideal of null sets with respect to the standard product measure $\lambda_\kappa$ on the space $2^\kappa$, i.e. $\mathcal{N}_\kappa=\mathcal{N}_{\lambda_\kappa}$. By $\lambda$ and $\mathbb{M}$ we mean simply $\lambda_\omega$ and $\mathbb{M}_\omega$, respectively.

By $\mathbbm{2}$ we mean the 2-point Boolean algebra $\{0,1\}$. Notice that $\mathbbm{2} = \mathbb{M}_1$, i.e. it is a measure algebra (with measure $\lambda_1$).

$\mathbb{C}$ will denote \textit{the Cantor algebra}, i.e. the free algebra generated by $\omega$ generators. Note that $\mathbb{C}$ is isomorphic to the algebra of clopen subsets of the Cantor space $2^\omega$ and hence $St(\mathbb{C})$ is homeomorphic to $2^\omega$. We will use this identification frequently.

In general we denote elements of Boolean algebras, including $\mathbb{M}_\kappa$'s, with capital letters. However, speaking about an element of $\mathbb{M}_\kappa$ sometimes we understand it as a condition of the forcing notion $\mathbb{M}_\kappa
\setminus \{0\}$. In this case, we will rather use the standard forcing notation: $p$, $q$ and so on.

\subsection{Metric Boolean algebras.} 
We will say, after Kolmogorov \cite{Kolmogorov}, that a Boolean algebra $\mathbb{B}$ is \emph{metric} if
it supports a strictly positive measure, i.e. there is a  measure $\mu$ on $\mathbb{B}$ such that $\mu(A)>0$ for every $A\in\mathbb{B}\setminus\{0\}$. Note that a metric algebra need not to be $\sigma$-complete, so it is not necessarily a measure algebra. The word \emph{metric} is explained by the following simple fact.

\begin{fact} If $\mu$ is a strictly positive measure on a Boolean algebra $\mathbb{B}$, then the function $d_\mu\colon \mathbb{B}\times\mathbb{B} \to \mathbb{R}$ defined for every $A,B\in\mathbb{B}$ by the formula
	\[ d_\mu(A,B) = \mu(A\triangle B) \]
is a metric on $\mathbb{B}$ (called \emph{Fr\'echet--Nikodym metric}).
\end{fact}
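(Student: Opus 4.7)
The plan is to verify the four axioms of a metric in turn: non-negativity, the identity of indiscernibles, symmetry, and the triangle inequality. Non-negativity is immediate since $\mu$ takes values in $[0,1]$, and symmetry follows at once from the fact that the symmetric difference is commutative, $A\triangle B=B\triangle A$, so that $d_\mu(A,B)=d_\mu(B,A)$.

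For the identity of indiscernibles, I would argue as follows. If $A=B$ then $A\triangle B=0$, so $d_\mu(A,B)=\mu(0)=0$. Conversely, if $d_\mu(A,B)=0$ then $\mu(A\triangle B)=0$, and here is exactly where strict positivity of $\mu$ enters: since every nonzero element of $\mathbb{B}$ has positive measure, we must have $A\triangle B=0$ in $\mathbb{B}$, which is equivalent to $A=B$. This is the only point in the argument where the hypothesis of strict positivity is used; for a general (non-strictly-positive) measure the formula still defines a pseudometric.

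The main step, though still routine, is the triangle inequality: for all $A,B,C\in\mathbb{B}$,
\[ \mu(A\triangle C)\le \mu(A\triangle B)+\mu(B\triangle C). \]
I would prove this by first establishing the Boolean identity
\[ A\triangle C=(A\triangle B)\triangle(B\triangle C), \]
which is immediate since $\triangle$ is associative with $B\triangle B=0$. From $X\triangle Y\le X\vee Y$ (valid for any $X,Y$ in a Boolean algebra, since symmetric difference is contained in the union) applied to $X=A\triangle B$, $Y=B\triangle C$, we obtain $A\triangle C\le (A\triangle B)\vee(B\triangle C)$. Monotonicity and finite subadditivity of $\mu$ then give $\mu(A\triangle C)\le\mu(A\triangle B)+\mu(B\triangle C)$, which is the desired inequality.

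I do not expect any genuine obstacle here; the only subtlety worth flagging is that strict positivity is essential precisely for the separation axiom, and that the triangle inequality rests on the Boolean identity $A\triangle C=(A\triangle B)\triangle(B\triangle C)$ together with subadditivity of $\mu$ — facts that hold for finitely additive measures and so do not require any countable additivity or completeness assumption on $\mathbb{B}$.
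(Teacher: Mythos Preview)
Your proof is correct and complete. The paper does not actually supply a proof of this fact; it is stated as a ``simple fact'' and left unproved, so your verification of the metric axioms (with the observation that strict positivity is used precisely for the separation axiom) is more than the paper provides.
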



Note that we may define easily a Radon version of the Fr\'echet--Nikodym metric. Namely, if $\mu$ is a strictly positive measure on a Boolean algebra $\mathbb{B}$, then the function $d_\mu^{Bor}\colon Bor(St(\mathbb{B}))\times Bor(St(\mathbb{B}))\to\mathbb{R}$ defined as
\[d_\mu^{Bor}(A,B) = \widehat{\mu}(A\triangle B),\]
where $A,B\in Bor(St(\mathbb{B}))$, is a pseudometric. Call $d_\mu^{Bor}$ \textit{the Borel Fr\'echet--Nikodym pseudometric} on $(\mathbb{B},\mu)$. 

Recall that there is a natural and well-studied notion of convergence in complete Boolean algebras.  Let $\mathbb{A}$ be a complete Boolean algebra and let $(A_n)$ be a sequence of its elements. We say that $(A_n)$ \emph{algebraically converges} to $A\in \mathbb{A}$ if 
\[\bigvee_n \bigwedge_{m>n} A_m = \bigwedge_n \bigvee_{m>n} A_m = A.\]
Such a notion was deeply studied e.g. in \cite{Glowczynski}, \cite{BalFraHru}, \cite{JechSub}. \emph{The sequential topology} on $\mathbb{A}$ is the largest topology with respect to which all the algebraically convergent sequences converge. If $\mathbb{A}$ is a measure algebra, then the sequential topology coincides with the topology introduced by the Fr\'echet--Nikodym metric (see \cite{Glowczynski}).

\section{Convergences and topologies in $\mathcal{H}(\mathbb{A},\mathbb{B})$}\label{sec-convergence}

In the next subsections we will see that for given Boolean algebras $\mathbb{A}$ and $\mathbb{B}$ we may endow the space $\mathcal{H}(\mathbb{A},\mathbb{B})$ with several topologies and convergences.

\subsection{Pointwise metric topology}

Let $(\mathbb{B},\mu)$ be a metric algebra. Then, there is a natural topology on $\mathcal{H}(\mathbb{A},\mathbb{B})$ given by the metric $d_\mu$, which we call \textit{the pointwise metric topology}. Namely, an element of the subbase is of the form
\[ V(\varphi,A,\varepsilon) = \big\{\psi\in \mathcal{H}(\mathbb{A},\mathbb{B})\colon d_\mu(\varphi(A),\psi(A))<\varepsilon\big\}\]
for $\varphi\in\mathcal{H}(\mathbb{A},\mathbb{B})$, $A\in \mathbb{A}$ and $\varepsilon>0$.

Let $(\varphi_n)$ be a sequence of homomorphisms from $\mathbb{A}$ to $\mathbb{B}$. It follows that $(\varphi_n)$ is convergent in the pointwise metric topology to some $\varphi\in\mathcal{H}(\mathbb{A},\mathbb{B})$ if and only if for every $A\in\mathbb{A}$ and every $\eps>0$ there is $N\in\omega$ such that $d_\mu\big(\varphi_n(A),\varphi(A)\big)<\eps$ for every $n>N$---in this case, we will say that $(\varphi_n)$ is \textit{pointwise metric convergent} to $\varphi$. Note that if $(\varphi_n)$ is not convergent to $\varphi$, then there are $A\in\mathbb{A}$, $\eps>0$ and a subsequence $(\varphi_{n_k})$ such that for every $k\in\omega$ we have
\[\eps\le d_\mu\big(\varphi_{n_k}(A),\varphi(A)\big)=\mu\big(\varphi_{n_k}(A)\setminus\varphi(A)\big)+\mu\big(\varphi(A)\setminus\varphi_{n_k}(A)\big),\]
so either $\mu\big(\varphi_{n_k}(A)\setminus\varphi(A)\big)\ge\eps/2$ for infinitely many $k\in\omega$, or $\mu\big(\varphi(A)\setminus\varphi_{n_k}(A)\big)\ge\eps/2$ for infinitely many $k\in\omega$. Since
\[\mu\big(\varphi(A)\setminus\varphi_{n_k}(A)\big)=\mu\big(\varphi_{n_k}(A^c)\setminus\varphi(A^c)\big),\]
it follows that if $(\varphi_n)$ does not converge pointwise metric to $\varphi$, then we may always find $A\in\mathbb{A}$, $\eps>0$ and a subsequence $(\varphi_{n_k})$ such that $\mu\big(\varphi_{n_k}(A)\setminus\varphi(A)\big)\ge\eps$. We will use this observation regularly.

\begin{rem}\label{hom_2_stone}
Using the natural bijection between the set of ultrafilters on a Boolean algebra $\mathbb{A}$ and the set of homomorphisms from
$\mathcal{H}(\mathbb{A},\mathbbm{2})$, we see that the pointwise metric topology coincides with the Stone topology on the set of	ultrafilters on $\mathbb{A}$, i.e. $St(\mathbb{A})$ and $\mathcal{H}(\mathbb{A},\mathbbm{2})$ are homeomorphic. 
Thus, since $\mathbbm{2}$ embeds into every (metric) Boolean algebra $\mathbb{B}$, the space $\mathcal{H}(\mathbb{A},\mathbb{B})$ with the pointwise metric topology always contains a closed copy of the Stone space of $\mathbb{A}$, which is immediately yielded by the following trivial fact. 
\end{rem}

\begin{lem}
Let $\mathbb{A}$, $\mathbb{B}$ and $\mathbb{C}$ be Boolean algebras. Assume that $(\mathbb{B},\mu)$ and $(\mathbb{C},\nu)$ are metric and such that $\mathbb{B}$ is a subalgebra of $\mathbb{C}$ with $\mu=\nu\restriction\mathbb{B}$. Endow $\mathcal{H}(\mathbb{A},\mathbb{B})$ and $\mathcal{H}(\mathbb{A},\mathbb{C})$ with the pointwise metric topologies. If for every $A\in\mathbb{C}\setminus\mathbb{B}$ there is $\eps>0$ such that $d_\nu(A,B)<\eps$ for no $B\in\mathbb{B}$, then $\mathcal{H}(\mathbb{A},\mathbb{B})$ is closed in $\mathcal{H}(\mathbb{A},\mathbb{C})$.
\end{lem}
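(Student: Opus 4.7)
The plan is to show that any $\varphi\in\mathcal{H}(\mathbb{A},\mathbb{C})$ lying in the closure of $\mathcal{H}(\mathbb{A},\mathbb{B})$ with respect to the pointwise metric topology must actually map $\mathbb{A}$ into the subalgebra $\mathbb{B}$; since $\mathbb{B}\leq\mathbb{C}$, this immediately yields $\varphi\in\mathcal{H}(\mathbb{A},\mathbb{B})$. The key observation is that the subbasic neighbourhoods $V(\varphi,A,\eps)$ involve only a single element $A\in\mathbb{A}$, so the problem reduces pointwise: for each $A$ we have to show that $\varphi(A)\in\mathbb{B}$, and this will be extracted from the hypothesis that $\mathbb{B}$ is uniformly separated from $\mathbb{C}\setminus\mathbb{B}$ in the metric $d_\nu$.

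Concretely, I would fix $\varphi$ in the closure of $\mathcal{H}(\mathbb{A},\mathbb{B})$ and an arbitrary $A\in\mathbb{A}$, and argue by contradiction. Assume $\varphi(A)\in\mathbb{C}\setminus\mathbb{B}$. By the hypothesis on the pair $(\mathbb{B},\mathbb{C})$, there is some $\eps>0$ such that $d_\nu\big(\varphi(A),B\big)\geq\eps$ for every $B\in\mathbb{B}$. The set $V(\varphi,A,\eps)$ is a subbasic open neighbourhood of $\varphi$ in $\mathcal{H}(\mathbb{A},\mathbb{C})$ with respect to the pointwise metric topology induced by $d_\nu$, so by the closure assumption it must meet $\mathcal{H}(\mathbb{A},\mathbb{B})$; pick any $\psi\in V(\varphi,A,\eps)\cap\mathcal{H}(\mathbb{A},\mathbb{B})$. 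Then $\psi(A)\in\mathbb{B}$ and $d_\nu\big(\varphi(A),\psi(A)\big)<\eps$, directly contradicting the choice of $\eps$. Hence $\varphi(A)\in\mathbb{B}$ for every $A\in\mathbb{A}$.

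It remains to observe that, since $\varphi$ was already a Boolean homomorphism from $\mathbb{A}$ into $\mathbb{C}$ whose range is contained in the subalgebra $\mathbb{B}$, it automatically defines an element of $\mathcal{H}(\mathbb{A},\mathbb{B})$; the compatibility $\mu=\nu\restriction\mathbb{B}$ ensures that the pointwise metric topology on $\mathcal{H}(\mathbb{A},\mathbb{B})$ coincides with its subspace topology inherited from $\mathcal{H}(\mathbb{A},\mathbb{C})$, so there is no loss of information when we pass between the two ambient spaces. There is no serious obstacle here, so I do not expect any delicate step; the whole content is the clean translation of the uniform separation hypothesis on $\mathbb{B}\subseteq\mathbb{C}$ into pointwise separation at each generator, which is exactly what the subbase of the pointwise metric topology sees.
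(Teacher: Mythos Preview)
Your argument is correct and is exactly the natural one; the paper itself gives no proof, calling the lemma a ``trivial fact'' and leaving it to the reader, so there is nothing to compare against beyond noting that your write-up fills in precisely the details the authors omitted.
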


\subsection{Uniform topology}

Let us again assume that $(\mathbb{B},\mu)$ is a metric algebra. We may define a metric $d_{hom}$ on the set $\mathcal{H}(\mathbb{A},\mathbb{B})$  in the following way:
\[d_{hom}(\varphi,\psi) = \sup\big\{d_\mu(\varphi(A),\psi(A))\colon A\in \mathbb{A}\big\},\]
where $\varphi,\psi\in\mathcal{H}(\mathbb{A},\mathbb{B})$. According to the topology induced by $d_{hom}$, which we call \textit{the uniform topology}, a sequence $(\varphi_n)$ converges to $\varphi$ iff it converges uniformly with respect to the metric $d_\mu$, i.e. for each $\varepsilon>0$ there is $N$ such that for every $n>N$ we have $d_\mu\big(\varphi_n(A),\varphi(A)\big)<\varepsilon$ for each $A\in \mathbb{A}$. We will say in this case that $(\varphi_n)$ \textit{converges uniformly} to $\varphi$. Of course, if a sequence of homomorphisms converges uniformly, then it converges pointwise---the converse however does not necessarily hold, see Sections \ref{ex_alg_not_borel}, \ref{ex_metric_not_borel_alg} and \ref{ex_borel_not_unif_alg} for appropriate examples.

\begin{rem}
Contrary to the pointwise metric topology (see Remark \ref{hom_2_stone}), for every Boolean algebra $\mathbb{A}$ the uniform topology on $\mathcal{H}(\mathbb{A},\mathbbm{2})$ is discrete.
\end{rem}

\subsection{Pointwise algebraic convergence}\label{sec-algebraic_conv}

This kind of convergence can be introduced in $\mathcal{H}(\mathbb{A},\mathbb{B})$ if $\mathbb{B}$ is $\sigma$-complete (but not necessarily metric). We say that a sequence of homomorphisms $(\varphi_n)$ from $\mathbb{A}$ to $\mathbb{B}$ converges \textit{pointwise algebraically} to $\varphi$ if for every $A\in \mathbb{A}$ we have
\[\bigvee_n \bigwedge_{m>n} \varphi_m(A) = \bigwedge_n \bigvee_{m>n} \varphi_m(A) = \varphi(A).\] 
In fact, by the de Morgan laws,  $(\varphi_n)$ converges pointwise algebraically to $\varphi$ if and only if any of the following two equivalent conditions hold:
\begin{itemize}
	\item for every $A\in \mathbb{A}$ we have $\bigvee_n \bigwedge_{m>n} \varphi_m(A) = \varphi(A)$;
	\item for every $A\in \mathbb{A}$ we have $\bigwedge_{n}\bigvee_{m>n}  \varphi_m(A) = \varphi(A)$.
\end{itemize}
If we assume that $\mathbb{B}$ is metric (with a strictly positive measure $\mu$), then if
$(\varphi_n)$ converges pointwise algebraically to $\varphi$, then for each $A\in \mathbb{A}$ we have
\[\lim_{n\to\infty} \mu\big(\bigwedge_{m>n} \varphi_m(A)\big) = \lim_{n\to\infty} \mu\big(\bigvee_{m>n} \varphi_m(A)\big) =  \mu(\varphi(A)).\]

The following fact binds the pointwise algebraic convergence with the pointwise metric convergence. Recall that for every $\kappa$ the measure $\lambda_\kappa$ is \textit{continuous}, i.e. for every decreasing sequence $(A_n)$ in $\mathbb{M}_\kappa$ we have $\lim_{n\to\infty}\lambda_\kappa(A_n)=\lambda_\kappa\big(\bigwedge_{n\in\omega}A_n\big)$.

\begin{prop}\label{algebraic_metric}
	Let $\mathbb{A}$ be a Boolean algebra and $\kappa$ a cardinal number. Let $(\varphi_n)$ be a sequence of homomorphisms from $\mathbb{A}$ to $\mathbb{M}_\kappa$. If $(\varphi_n)$ is pointwise algebrically convergent to some $\varphi\in\mathcal{H}(\mathbb{A},\mathbb{M}_\kappa)$, then $\varphi_n$ converges pointwise metric to $\varphi$.
\end{prop}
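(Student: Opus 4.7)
The plan is to exploit the sandwich structure that algebraic convergence provides and then invoke the countable additivity (equivalently, the continuity) of $\lambda_\kappa$ on the measure algebra $\mathbb{M}_\kappa$. Fix $A\in\mathbb{A}$ and set
\[ B_n = \bigwedge_{m>n}\varphi_m(A), \qquad C_n = \bigvee_{m>n}\varphi_m(A), \]
which exist because $\mathbb{M}_\kappa$ is complete. Note $(B_n)$ is increasing and $(C_n)$ is decreasing in $\mathbb{M}_\kappa$, and by the hypothesis of pointwise algebraic convergence, $\bigvee_n B_n = \bigwedge_n C_n = \varphi(A)$.

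The key observation is the following sandwich: for every $m>n$ both $\varphi_m(A)$ and $\varphi(A)$ lie between $B_n$ and $C_n$, since $B_n \leq \varphi_m(A) \leq C_n$ by definition and $B_n \leq \varphi(A) \leq C_n$ by the algebraic convergence. Consequently,
\[ \varphi_m(A) \triangle \varphi(A) \leq C_n \setminus B_n \qquad \text{for all } m>n,\]
and therefore
\[ d_{\lambda_\kappa}\bigl(\varphi_m(A), \varphi(A)\bigr) = \lambda_\kappa\bigl(\varphi_m(A)\triangle\varphi(A)\bigr) \leq \lambda_\kappa(C_n) - \lambda_\kappa(B_n).\]

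Finally I would invoke countable additivity of $\lambda_\kappa$ on $\mathbb{M}_\kappa$: since $(B_n)$ is increasing with supremum $\varphi(A)$ and $(C_n)$ is decreasing with infimum $\varphi(A)$, we have $\lambda_\kappa(B_n) \nearrow \lambda_\kappa(\varphi(A))$ and $\lambda_\kappa(C_n) \searrow \lambda_\kappa(\varphi(A))$, so $\lambda_\kappa(C_n)-\lambda_\kappa(B_n) \to 0$. Given $\varepsilon>0$, pick $n$ with $\lambda_\kappa(C_n)-\lambda_\kappa(B_n)<\varepsilon$; then $d_{\lambda_\kappa}(\varphi_m(A),\varphi(A))<\varepsilon$ for every $m>n$, which is the required pointwise metric convergence.

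There is no real obstacle here; the only thing that must be handled carefully is making sure the continuity of $\lambda_\kappa$ is applied to monotone \emph{countable} sequences (which is fine, as $(B_n)$ and $(C_n)$ are indexed by $\omega$) and that the inequality $\varphi_m(A)\triangle\varphi(A)\leq C_n\setminus B_n$ is justified from the two-sided bounds. The argument uses no property of $\mathbb{M}_\kappa$ beyond $\sigma$-completeness and continuity of the measure, so it would go through for any $\sigma$-complete metric algebra whose measure is continuous.
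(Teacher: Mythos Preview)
Your proof is correct. Both your argument and the paper's rest on the continuity of $\lambda_\kappa$ along monotone countable sequences in $\mathbb{M}_\kappa$, but the organization differs: the paper argues by contradiction, extracting a subsequence with $\lambda_\kappa\big(\varphi_{n_k}(A)\setminus\varphi(A)\big)\ge\eps$ and then showing that $\bigwedge_k\bigvee_{l>k}\big(\varphi_{n_l}(A)\setminus\varphi(A)\big)=0$ forces a contradiction via continuity, whereas you give a direct sandwich bound $\varphi_m(A)\triangle\varphi(A)\le C_n\setminus B_n$ using both the $\liminf$ and $\limsup$ sequences. Your route is slightly cleaner---no subsequences, no contradiction, and the quantitative bound $d_{\lambda_\kappa}(\varphi_m(A),\varphi(A))\le\lambda_\kappa(C_n)-\lambda_\kappa(B_n)$ is explicit---while the paper's version uses only one of the two equivalent characterizations of algebraic convergence. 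Your closing remark that the argument needs only $\sigma$-completeness of the target algebra and continuity of the measure is also accurate.
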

\begin{proof}
Let $A\in\mathbb{A}$. We will show that $\lim_{n\to\infty}\lambda_\kappa\big(\varphi_n(A)\triangle\varphi(A)\big)=0$. For the sake of contradiction assume that there is $\eps>0$ and a subsequence $(\varphi_{n_k})$ such that
$\lambda_\kappa(\varphi_{n_k}(A)\setminus\varphi(A))\ge\eps$ for every $k\in\omega$. Then, by the continuity of $\lambda_\kappa$,
\[\lambda_\kappa\Big(\bigwedge_{k\in\omega}\bigvee_{l>k}\big(\varphi_{n_l}(A)\setminus\varphi(A)\big)\Big) \geq \varepsilon. \]
However, since $\bigwedge_{k\in\omega}\bigvee_{l>k}\varphi_{n_l}(A)=\varphi(A)$, we get that $\bigwedge_{k\in\omega}\bigvee_{l>k}\big(\varphi_{n_l}(A)\setminus\varphi(A)\big)=0$, so
\[0=\lambda_\kappa\Big(\bigwedge_{k\in\omega}\bigvee_{l>k}\big(\varphi_{n_l}(A)\setminus\varphi(A)\big)\Big)\ge\eps>0,\]
a contradiction.
\end{proof}

As we will see in Sections \ref{ex_unif_not_alg} and \ref{ex_metric_not_borel_alg}, the converse to Proposition \ref{algebraic_metric} does not hold in general. However, for $\mathbb{B}=\mathbbm{2}$ we have the following corollary. 

\begin{cor}\label{alg_pointwise_coincide}
Let $\mathbb{A}$ be a Boolean algebra. Then, in $\mathcal{H}(\mathbb{A},\mathbbm{2})$ the pointwise algebraic
	convergence coincides with the pointwise metric convergence.
\end{cor}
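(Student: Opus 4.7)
The plan is to combine Proposition \ref{algebraic_metric}, which already hands us pointwise algebraic $\Rightarrow$ pointwise metric convergence (this direction works for any $\mathbb{M}_\kappa$, in particular $\mathbbm{2}=\mathbb{M}_1$), with the observation that in $\mathbbm{2}$ both modes of convergence degenerate to the same elementary condition: eventual coordinatewise equality.

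First I would unpack pointwise metric convergence in $\mathbbm{2}$. The measure $\lambda_1$ takes only the values $0$ and $1$, so for $x,y\in\mathbbm{2}$ we have $d_{\lambda_1}(x,y)=\lambda_1(x\triangle y)\in\{0,1\}$, with value $0$ exactly when $x=y$. Therefore a sequence $(\varphi_n)$ in $\mathcal{H}(\mathbb{A},\mathbbm{2})$ converges pointwise metric to $\varphi$ if and only if for every $A\in\mathbb{A}$ there exists $N\in\omega$ such that $\varphi_n(A)=\varphi(A)$ for all $n>N$.

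Next I would observe the analogous reformulation for algebraic convergence in $\mathbbm{2}$. Since $\mathbbm{2}$ is finite, the infinite joins and meets collapse: for any $A\in\mathbb{A}$,
\[\bigvee_{n}\bigwedge_{m>n}\varphi_m(A)=1\iff(\exists n)(\forall m>n)\ \varphi_m(A)=1,\]
\[\bigwedge_{n}\bigvee_{m>n}\varphi_m(A)=0\iff(\exists n)(\forall m>n)\ \varphi_m(A)=0.\]
If both expressions equal the common value $\varphi(A)$, then in the case $\varphi(A)=1$ the first equivalence forces the sequence $(\varphi_m(A))$ to be eventually $1$, and in the case $\varphi(A)=0$ the second equivalence forces it to be eventually $0$. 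Conversely, if $\varphi_m(A)$ is eventually equal to $\varphi(A)$, then both the limsup and the liminf in $\mathbbm{2}$ equal $\varphi(A)$. Hence pointwise algebraic convergence in $\mathcal{H}(\mathbb{A},\mathbbm{2})$ is also equivalent to: for every $A\in\mathbb{A}$, $\varphi_n(A)=\varphi(A)$ for all sufficiently large $n$.

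The two conditions now coincide verbatim, which finishes the proof. There is no real obstacle here: the whole argument hinges on the trivial fact that $\mathbbm{2}$ has only two elements, so both the Fr\'echet--Nikodym metric and the algebraic limsup/liminf detect nothing finer than eventual equality. Alternatively one could simply cite Proposition \ref{algebraic_metric} for one implication and derive the reverse implication from the discreteness of $d_{\lambda_1}$ together with the observation above.
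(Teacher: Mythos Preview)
Your proof is correct and follows essentially the same approach as the paper: cite Proposition \ref{algebraic_metric} for one direction, and for the other use that the Fr\'echet--Nikodym metric on $\mathbbm{2}$ is $\{0,1\}$-valued so pointwise metric convergence reduces to eventual coordinatewise equality, which immediately gives the algebraic limit. Your version is slightly more explicit in spelling out that algebraic convergence in $\mathbbm{2}$ is also characterized by eventual equality, but the substance is identical.
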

\begin{proof}
Let $(\varphi_n)$ be a sequence in $\mathcal{H}(\mathbb{A},\mathbbm{2})$. If $(\varphi_n)$ is pointwise algebraically convergent, then by Proposition \ref{algebraic_metric} it is also pointwise metric convergence.

Conversely, if $(\varphi_n)$ is pointwise metric convergent to $\varphi\in\mathcal{H}(\mathbb{A},\mathbbm{2})$, then for every $A\in\mathbb{A}$ there is $N\in\omega$ such that $\varphi_n(A)=\varphi(A)$ for every $n>N$. It follows that $\bigwedge_{n>N}\varphi_n(A)=\varphi(A)$, so in particular $\bigvee_{N\in\omega}\bigwedge_{n>N}\varphi_n(A)=\varphi(A)$. Hence, $(\varphi_n)$ converges pointwise algebraically to $\varphi$.
\end{proof}

\subsection{Pointwise Borel metric topology and uniform Borel topology}\label{sec-borel_conv}

Let $\mathbb{A}$ be a Boolean algebra and $(\mathbb{B},\mu)$ a metric Boolean algebra. To introduce yet another topology on $\mathcal{H}(\mathbb{A},\mathbb{B})$, called by us \textit{pointwise Borel metric topology}, we need to appeal to dual continuous functions and the pseudometric $d_\mu^{Bor}$. Namely, we define the subbase of the topology to be given by the sets of the form
\[V^{Bor}(\varphi,A,\eps)=\big\{\psi\in \mathcal{H}(\mathbb{A},\mathbb{B})\colon d_\mu^{Bor}\big(f_\varphi^{-1}[A],f_\psi^{-1}[A]\big)<\varepsilon\big\}\]
for $\varphi\in\mathcal{H}(\mathbb{A},\mathbb{B})$, $A\in Bor(St(\mathbb{A}))$ and $\varepsilon>0$.

Let $(\varphi_n)$ be a sequence of homomorphisms from $\mathbb{A}$ to $\mathbb{B}$. For each $n\in\omega$ let $f_n=f_{\varphi_n}$ and let $f=f_\varphi$. We say that $(\varphi_n)$ \textit{converges pointwise Borel metric} to $\varphi$ if $\lim_{n\to\infty}\widehat{\mu}\big(f_n^{-1}[B]\triangle f^{-1}[B]\big)=0$ for every $B\in Bor(St(\mathbb{A}))$. Obviously, if a sequence of homomorphisms converges pointwise Borel metric, then it converges pointwise metric, however the converse may not hold---we provide relevant counterexamples in Sections \ref{ex_alg_not_borel} and \ref{ex_metric_not_borel_alg} as well as some positive results in Section \ref{measures} (see, in particular, Corollary \ref{positive_grothendieck}). 

Similarly to the uniform topology on $\mathcal{H}(\mathbb{A},\mathbb{B})$, we may define \textit{the uniform Borel topology}. Namely, we define a Borel version $d_{hom}^{Bor}$ of the metric $d_{hom}$:
\[d_{hom}^{Bor}(\varphi,\psi) = \sup\big\{d_\mu^{Bor}\big(f_\varphi^{-1}[A],f_\psi^{-1}[A]\big)\colon A\in Bor(St(\mathbb{A}))\big\}, \]
where $\varphi,\psi\in\mathcal{H}(\mathbb{A},\mathbb{B})$. Note that, since $\mu$ is strictly positive on $\mathbb{A}$, $d_{hom}^{Bor}$ is a metric, even though $d_\mu^{Bor}$ is only a pseudometric. As before, we will say that a sequence $(\varphi_n)$ \textit{converges uniformly Borel} to $\varphi$ if for every $\eps>0$ there is $N\in\omega$ such that $d_{hom}^{Bor}(\varphi_n,\varphi)<\eps$ for every $n>N$, i.e. $\widehat{\mu}\big(f_n^{-1}[B]\triangle f^{-1}[B]\big)<\eps$ for every $n>N$ and every Borel subset $B\subseteq St(\mathbb{A})$. Of course, if a sequence converges uniformly Borel, then it converges pointwise Borel metric and uniformly. It appears that, conversely, the uniform convergence easily implies the Borel uniform convergence, because, in fact, the uniform topology and uniform Borel topology coincide.

\begin{lem}\label{uniform_metrics_are_equal}
Let $\mathbb{A}$ be a Boolean algebra and $(\mathbb{B},\mu)$ a metric Boolan algebra. Then, for the metrics $d_{hom}$ and $d_{hom}^{Bor}$ on $\mathcal{H}(\mathbb{A},\mathbb{B})$ we have $d_{hom}=d_{hom}^{Bor}$.
\end{lem}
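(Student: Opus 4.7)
My plan is to prove the equality by showing both inequalities separately. The inequality $d_{hom} \le d_{hom}^{Bor}$ is immediate: identifying $\mathbb{A}$ with $Clopen(St(\mathbb{A}))$, every $A \in \mathbb{A}$ corresponds to a clopen subset of $St(\mathbb{A})$, and by the very construction of $f_\varphi$ and $f_\psi$ we have $f_\varphi^{-1}[A] = \varphi(A)$ and $f_\psi^{-1}[A] = \psi(A)$ as clopen subsets of $St(\mathbb{B})$. Hence $d_\mu^{Bor}(f_\varphi^{-1}[A], f_\psi^{-1}[A]) = d_\mu(\varphi(A), \psi(A))$, so the supremum in the definition of $d_{hom}^{Bor}$, ranging over all Borel sets, dominates the one in the definition of $d_{hom}$, ranging over $\mathbb{A}$ alone.

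For the nontrivial inequality $d_{hom}^{Bor} \le d_{hom}$, I set $c := d_{hom}(\varphi, \psi)$ and consider
\[ \mathcal{D} = \big\{B \in Bor(St(\mathbb{A})) : \widehat{\mu}\big(f_\varphi^{-1}[B] \triangle f_\psi^{-1}[B]\big) \le c\big\}. \]
The plan is to apply the monotone class theorem to conclude that $\mathcal{D} = Bor(St(\mathbb{A}))$. By the computation above, $\mathcal{D}$ already contains the Boolean algebra $Clopen(St(\mathbb{A}))$, which generates $Bor(St(\mathbb{A}))$ because clopens form a base for the topology on $St(\mathbb{A})$. Trivially $\mathcal{D}$ is closed under complementation, since $X \triangle Y = X^c \triangle Y^c$ and $f_\varphi^{-1}$, $f_\psi^{-1}$ commute with complements.

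The crucial step is closure of $\mathcal{D}$ under monotone limits. If $B_n \in \mathcal{D}$ with $B_n \uparrow B$ or $B_n \downarrow B$, then $\chi_{B_n} \to \chi_B$ pointwise on $St(\mathbb{A})$, so $\chi_{f_\varphi^{-1}[B_n]} = \chi_{B_n} \circ f_\varphi$ converges pointwise to $\chi_{f_\varphi^{-1}[B]}$ on $St(\mathbb{B})$, and the same for $\psi$. Consequently
\[ \chi_{f_\varphi^{-1}[B_n] \triangle f_\psi^{-1}[B_n]} = \bigl|\chi_{f_\varphi^{-1}[B_n]} - \chi_{f_\psi^{-1}[B_n]}\bigr| \]
converges pointwise to $\chi_{f_\varphi^{-1}[B] \triangle f_\psi^{-1}[B]}$ and is dominated by the constant function $1$. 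The dominated convergence theorem, applied to the finite Radon measure $\widehat{\mu}$, then yields
\[ \widehat{\mu}\big(f_\varphi^{-1}[B_n] \triangle f_\psi^{-1}[B_n]\big) \longrightarrow \widehat{\mu}\big(f_\varphi^{-1}[B] \triangle f_\psi^{-1}[B]\big), \]
and since each term is at most $c$, the limit is at most $c$, so $B \in \mathcal{D}$. The monotone class theorem then forces $\mathcal{D} \supseteq \sigma(Clopen(St(\mathbb{A}))) = Bor(St(\mathbb{A}))$; taking the supremum over $B$ gives $d_{hom}^{Bor}(\varphi, \psi) \le c = d_{hom}(\varphi, \psi)$.

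The only step I expect to require genuine attention is the continuity of the symmetric-difference measure along monotone limits, because symmetric differences are not themselves monotone even when the underlying sequences are. The observation that defuses this obstacle is that $\widehat{\mu}(X \triangle Y)$ is precisely the $L^1$-distance $\|\chi_X - \chi_Y\|_1$, at which point dominated convergence handles both increasing and decreasing sequences uniformly, and no additional regularity of $\widehat{\mu}$ beyond finiteness is needed.
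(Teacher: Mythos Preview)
Your argument has a genuine gap when $\mathbb{A}$ is uncountable. The monotone class theorem gives $\mathcal{D} \supseteq \sigma\big(Clopen(St(\mathbb{A}))\big)$, but this $\sigma$-algebra---the Baire $\sigma$-algebra of $St(\mathbb{A})$---is in general strictly smaller than $Bor(St(\mathbb{A}))$. The assertion that clopens generate the Borel $\sigma$-algebra ``because clopens form a base for the topology'' fails without second countability: being a base only guarantees that every open set is a \emph{union} of clopens, not a \emph{countable} union. For example, if $\mathbb{A}$ is the free algebra on $\omega_1$ generators then $St(\mathbb{A}) = 2^{\omega_1}$; every set in $\sigma(Clopen)$ depends on only countably many coordinates, whereas a singleton $\{x\}$ is closed (hence Borel) but not in $\sigma(Clopen)$. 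Since $d_{hom}^{Bor}$ is defined as a supremum over all of $Bor(St(\mathbb{A}))$, your argument only establishes the desired inequality over Baire sets.

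The paper closes this gap by invoking the regularity of the pushforward Radon measures $\widehat{\mu}\circ f_\varphi^{-1}$ and $\widehat{\mu}\circ f_\psi^{-1}$ on $St(\mathbb{A})$: for every Borel $B$ and every $\eps>0$ one finds a clopen $A_B$ with both $\widehat{\mu}\big(f_\varphi^{-1}[A_B\triangle B]\big)$ and $\widehat{\mu}\big(f_\psi^{-1}[A_B\triangle B]\big)$ below $\eps/2$, after which a triangle-inequality estimate against $d_\mu\big(\varphi(A_B),\psi(A_B)\big)\le d_{hom}(\varphi,\psi)$ finishes. This regularity-based approximation is precisely the ingredient missing from your argument; once it is in hand the monotone class machinery becomes redundant. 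Your proof is correct as written when $\mathbb{A}$ is countable (then $St(\mathbb{A})$ is metrizable and Baire${}={}$Borel), but the lemma is applied in the paper to uncountable algebras such as $\sigma$-complete ones.
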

\begin{proof}
Fix $\varphi,\psi\in\mathcal{H}(\mathbb{A},\mathbb{B})$. Since every clopen set in $St(\mathbb{A})$ is Borel, we have $d_{hom}(\varphi,\psi)\le d_{hom}^{Bor}(\varphi,\psi)$. Let $\eps>0$---we will show that $d_{hom}^{Bor}(\varphi,\psi)\le d_{hom}(\varphi,\psi)+\eps$, which will prove that $d_{hom}^{Bor}(\varphi,\psi)\le d_{hom}(\varphi,\psi)$. By the regularity of $\widehat{\mu}\circ f_\varphi^{-1}$ and $\widehat{\mu}\circ f_\psi^{-1}$, for every $B\in Bor(St(\mathbb{A}))$ there is $A_B\in\mathbb{A}$ such that $B\subseteq A_B$, $\widehat{\mu}\big(f_\varphi^{-1}[A_B\setminus B]\big)<\eps/2$ and $\widehat{\mu}\big(f_\psi^{-1}[A_B\setminus B]\big)<\eps/2$. Thus, for every Borel $B\in Bor(St(\mathbb{A}))$ we have:
\[\widehat{\mu}\big(f_\varphi^{-1}[B]\triangle f_\psi^{-1}[B]\big)\le\]
\[\widehat{\mu}\big(f_\varphi^{-1}[B]\triangle f_\varphi^{-1}\big[A_B\big]\big)+\widehat{\mu}\big(f_\varphi^{-1}\big[A_B\big]\triangle f_\psi^{-1}\big[A_B\big]\big)+\widehat{\mu}\big(f_\psi^{-1}\big[A_B\big]\triangle f_\psi^{-1}[B]\big)<\]
\[\eps/2+d_\mu\big(\varphi\big(A_B\big),\psi\big(A_B\big)\big)+\eps/2\le d_{hom}(\varphi,\psi)+\eps,\]
so $d_{hom}^{Bor}(\varphi,\psi)\le d_{hom}(\varphi,\psi)+\eps$.
\end{proof}

\begin{cor}\label{uniform_metrics_convergence}
Let $\mathbb{A}$ be a Boolean algebra and $(\mathbb{B},\mu)$ a metric Boolan algebra. Then:
\begin{enumerate}
	\item the uniform topology and the uniform Borel topology coincide;
	\item a sequence of homomorphisms from $\mathbb{A}$ to $\mathbb{B}$ converges uniformly if and only if it converges uniformly Borel.
\end{enumerate}
\end{cor}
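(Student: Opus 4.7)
The corollary is essentially a direct formal consequence of Lemma \ref{uniform_metrics_are_equal}, so the plan is to reduce both statements to the equality $d_{hom}=d_{hom}^{Bor}$ that has just been proved.

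First I would unwind the definitions. The uniform topology on $\mathcal{H}(\mathbb{A},\mathbb{B})$ is by definition the topology induced by the metric $d_{hom}$, and the uniform Borel topology is by definition the topology induced by the metric $d_{hom}^{Bor}$. Two metrics on the same set that are literally equal as functions $\mathcal{H}(\mathbb{A},\mathbb{B})\times\mathcal{H}(\mathbb{A},\mathbb{B})\to\mathbb{R}$ generate, via open balls, exactly the same basis, and hence the same topology. Thus part (1) is immediate from Lemma \ref{uniform_metrics_are_equal}.

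For part (2), I would just rewrite the convergence conditions in terms of the two metrics. A sequence $(\varphi_n)$ converges uniformly to $\varphi$ iff $d_{hom}(\varphi_n,\varphi)\to 0$, and it converges uniformly Borel to $\varphi$ iff $d_{hom}^{Bor}(\varphi_n,\varphi)\to 0$. Since the two metrics agree pointwise by Lemma \ref{uniform_metrics_are_equal}, the two numerical sequences are identical, and therefore one tends to $0$ if and only if the other does. This gives the equivalence, with the limiting homomorphism $\varphi$ being the same in both cases.

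There is essentially no obstacle here: the entire content was already absorbed in Lemma \ref{uniform_metrics_are_equal}, whose one nontrivial ingredient was the Radon regularity of $\widehat{\mu}\circ f_\varphi^{-1}$ and $\widehat{\mu}\circ f_\psi^{-1}$ used to approximate an arbitrary Borel set $B\subseteq St(\mathbb{A})$ from above by a clopen set $A_B\in\mathbb{A}$. The corollary itself is just the translation of that metric equality into the language of topologies and sequential convergence, so I would keep the write-up to a couple of sentences.
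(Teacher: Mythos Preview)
Your proposal is correct and matches the paper's approach: the corollary is stated without proof there, as an immediate consequence of Lemma~\ref{uniform_metrics_are_equal}, and your write-up simply spells out why equal metrics give the same topology and the same convergent sequences.
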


\section{Homomorphisms and measures}\label{measures}


By the Maharam theorem for every measure $\mu$ on a Boolean algebra $\mathbb{A}$ there are a cardinal number $\kappa$ and an injective homomorphism $\varphi\colon\mathbb{A}\to\mathbb{M}_\kappa$ such that $\mu = \lambda_\kappa \circ   \varphi $. On the other hand, if $\varphi\colon \mathbb{A} \to \mathbb{M}_\kappa$ is a homomorphism, then it is plain to check that the function $\lambda_\kappa \circ \varphi \colon \mathbb{A} \to \mathbb{R}$ is a (finitely additive) measure. It follows that there is a natural mapping from the
family of homomorphisms $\mathcal{H}(\mathbb{A},\mathbb{M}_\kappa)$ into the family of measures on $\mathbb{A}$. 
This observation allows us to use notions from the theory of measures on Boolean algebras to study $\mathcal{H}(\mathbb{A},\mathbb{M}_\kappa)$.

\begin{prop}\label{weakstar_cont}
Let $\mathbb{A}$ be a Boolean algebra and $(\mathbb{B},\mu)$ a metric algebra. Consider the space $\mathcal{H}(\mathbb{A},\mathbb{B})$ with the topology $\tau$ and the space $P(St(\mathbb{A}))$ with topology $\tau'$, both given below. Then, the function $F_\mu\colon \mathcal{H}(\mathbb{A},\mathbb{B}) \to P(St(\mathbb{A}))$, given by $F_\mu(\varphi) =\widehat{\mu}\circ f_\varphi^{-1}$, is continuous in each of the following cases: 
	\begin{enumerate}
		\item $\tau$ is the uniform topology and $\tau'$ is the norm topology;
		\item $\tau$ is the pointwise metric topology and $\tau'$ is the weak* topology.
	\end{enumerate}
\end{prop}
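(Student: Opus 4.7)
The plan is to reduce both parts of the proposition to the single elementary estimate $|\mu(X) - \mu(Y)| \le \mu(X \triangle Y) = d_\mu(X,Y)$, valid in any metric algebra, and its Borel counterpart with $\widehat{\mu}$ in place of $\mu$. The key measure-theoretic bridge between the two sides of $F_\mu$ is the Stone-duality identity: for a clopen $C \subseteq St(\mathbb{A})$ corresponding to $A \in \mathbb{A}$, unfolding $f_\varphi(y) = \varphi^{-1}[y]$ yields $f_\varphi^{-1}[C] = \{y \in St(\mathbb{B}) : \varphi(A) \in y\}$, so $F_\mu(\varphi)(C) = \mu(\varphi(A))$. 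This ties the value of $F_\mu(\varphi)$ on a clopen of $St(\mathbb{A})$ directly to the value of $\mu$ on the image $\varphi(A)$.

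For part (1), I would show that $F_\mu$ is in fact Lipschitz. Fix $\varphi, \psi \in \mathcal{H}(\mathbb{A},\mathbb{B})$ and a pair of disjoint Borel sets $A, B \subseteq St(\mathbb{A})$. The Borel version of the elementary estimate gives $|F_\mu(\varphi)(A) - F_\mu(\psi)(A)| \le \widehat{\mu}(f_\varphi^{-1}[A] \triangle f_\psi^{-1}[A]) \le d_{hom}^{Bor}(\varphi, \psi)$, and analogously for $B$; summing and taking the supremum over disjoint pairs yields $d_{var}(F_\mu(\varphi), F_\mu(\psi)) \le 2 d_{hom}^{Bor}(\varphi, \psi)$. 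By Lemma~\ref{uniform_metrics_are_equal} this right-hand side equals $2 d_{hom}(\varphi, \psi)$, so $F_\mu$ is $2$-Lipschitz from $(\mathcal{H}(\mathbb{A},\mathbb{B}), d_{hom})$ into $(P(St(\mathbb{A})), d_{var})$, and in particular continuous.

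For part (2), I plan to check continuity on a subbase. Since $St(\mathbb{A})$ is totally disconnected, the weak* topology on $P(St(\mathbb{A}))$ admits a subbase of sets $V(\nu_0, C, \eps) = \{\nu : |\nu(C) - \nu_0(C)| < \eps\}$ with $C \subseteq St(\mathbb{A})$ clopen (as recorded in Section~\ref{sec-top_measures}), so it suffices to show that each preimage $F_\mu^{-1}[V(\nu_0, C, \eps)]$ is open. Using the identity recorded above, if $C$ corresponds to $A \in \mathbb{A}$, this preimage is $\{\varphi : |\mu(\varphi(A)) - \nu_0(C)| < \eps\}$. For any $\varphi$ in this set, set $\delta = \eps - |\mu(\varphi(A)) - \nu_0(C)| > 0$ and consider the pointwise-metric subbasic neighbourhood $V(\varphi, A, \delta)$. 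For $\psi$ in it, the elementary estimate gives $|\mu(\psi(A)) - \mu(\varphi(A))| \le d_\mu(\psi(A), \varphi(A)) < \delta$, and the triangle inequality places $\psi$ in the preimage. Hence the preimage is open, so $F_\mu$ is continuous.

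I do not anticipate any serious obstacle, since both parts rest on essentially the same one-line estimate. The only point requiring a little care is in part (1), where one must pass through the Borel pseudometric and Lemma~\ref{uniform_metrics_are_equal} to bridge between $d_{hom}^{Bor}$, which naturally controls differences of Radon measures on arbitrary Borel sets, and $d_{hom}$, which a priori only sees clopens; once this bridge is in place the variation-norm bound is immediate.
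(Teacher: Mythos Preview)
Your proof is correct and, for part (2), essentially identical to the paper's. For part (1) there is a small difference in route: you bound the variation distance via arbitrary Borel pairs and then invoke Lemma~\ref{uniform_metrics_are_equal} to pass from $d_{hom}^{Bor}$ to $d_{hom}$, whereas the paper uses instead the remark (recorded in Section~\ref{sec-top_measures}) that for a totally disconnected $K$ the variation metric $d_{var}$ may already be computed on disjoint \emph{clopen} pairs, so the estimate $|\mu(\varphi(A))-\mu(\psi(A))|\le d_\mu(\varphi(A),\psi(A))$ for $A\in\mathbb{A}$ yields the $2$-Lipschitz bound directly, without any detour through $d_{hom}^{Bor}$. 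The two bridges are equivalent in strength---indeed Lemma~\ref{uniform_metrics_are_equal} and the clopen reduction for $d_{var}$ both encode the same regularity argument---so this is a matter of taste, but the paper's version avoids the extra citation you flagged as ``the only point requiring a little care''.
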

\begin{proof} 
We will use below the fact that $|\mu(C) - \mu(D)|\leq\mu(C\triangle D)$ for every 	$C,D\in \mathbb{B}$.

We first prove (1). Using the above inequality, for every $A\in\mathbb{A}$ we have
\[\big|\mu(\varphi(A)) - \mu(\psi(A))\big| \leq \mu\big(\varphi(A) \triangle \psi(A)\big),\] 
and thus, by the definition of the variation metric on $P(St(\mathbb{A}))$, it holds:
\[d_{var}\big(F_\mu(\varphi),F_\mu(\psi)\big)=\sup_{\substack{A,B\in\mathbb{A}\\A\wedge B=0_\mathbb{A}}}\Big(\big|F_\mu(\varphi)(A)-F_\mu(\psi)(A)\big|+\big|F_\mu(\varphi)(B)-F_\mu(\psi)(B)\big|\Big)\leq\]
\[2\sup_{A\in\mathbb{A}}\big|F_\mu(\varphi)(A)-F_\mu(\psi)(A)\big|\leq 2\sup_{A\in \mathbb{A}} d_\mu(\varphi(A),\psi(A)) = 2d_{hom}(\varphi,\psi), \]
which actually shows  that $F_\mu$ is $2$-Lipschitz.


To prove (2) let $\varphi\in\mathcal{H}(\mathbb{A},\mathbb{B})$ and fix an element $V$ of the subbase of the weak* topology on $P(St(\mathbb{A}))$ of the form $V=V(F_\mu(\varphi),A,\eps) = \big\{\nu\colon\big|F_\mu(\varphi)(A)-\nu(A)\big|<\varepsilon\big\}$, where $A\in \mathbb{A}$ and $\varepsilon>0$. 
	Put:
	\[U=\big\{\psi\in\mathcal{H}(\mathbb{A},\mathbb{B})\colon d_\mu(\varphi(A),\psi(A))<\eps\big\}.\]
	Then, $U$ is an open set in $\mathcal{H}(\mathbb{A},\mathbb{B})$. For every $\psi\in U$ we have:
	\[\big|F_\mu(\varphi)(A)-F_\mu(\psi)(A)\big|\leq d_\mu\big(\varphi(A),\psi(A)\big)<\eps,\]
	so $\varphi\in F_\mu[U]\subseteq V$, which proves that $F_\mu$ is continuous.

\end{proof}

Consequently, the equivalence relation on $\mathcal{H}(\mathbb{A},\mathbb{B})$ defined by the formula
\[\varphi \sim \psi \iff F_\mu(\varphi) = F_\mu(\psi),\]
has closed equivalence classes, provided that $\mathcal{H}(\mathbb{A},\mathbb{B})$ is endowed with any of the topologies $\tau$ mentioned in the proposition. 

\begin{cor}\label{convergences_measures_implications}
Let $\mathbb{A}$ be a Boolean algebra and $(\mathbb{B},\mu)$ a metric algebra. Let $\varphi_n\in\mathcal{H}(\mathbb{A},\mathbb{B})$, $n\in\omega$, and $\varphi\in\mathcal{H}(\mathbb{A},\mathbb{B})$. The following hold:
\begin{enumerate}
	\item if $(\varphi_n)$ converges uniformly to $\varphi$, then $\big(\widehat{\mu}\circ f_{\varphi_n}^{-1}\big)$ converges to $\widehat{\mu}\circ f_{\varphi}^{-1}$ in norm;

	\item if $(\varphi_n)$ converges pointwise Borel metric to $\varphi$, then $\big(\widehat{\mu}\circ f_{\varphi_n}^{-1}\big)$ converges weakly to $\widehat{\mu}\circ f_{\varphi}^{-1}$;
	\item if $(\varphi_n)$ converges pointwise metric to $\varphi$, then $\big(\widehat{\mu}\circ f_{\varphi_n}^{-1}\big)$ converges weakly* to $\widehat{\mu}\circ f_{\varphi}^{-1}$.
\end{enumerate}
\end{cor}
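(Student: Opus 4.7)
\medskip

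My plan is to observe that parts (1) and (3) are essentially immediate from Proposition \ref{weakstar_cont}, while part (2) requires invoking Proposition \ref{diestel-weak} (Diestel's characterization of weak convergence of measures).

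For part (1), the function $F_\mu\colon(\mathcal{H}(\mathbb{A},\mathbb{B}),d_{hom})\to(P(St(\mathbb{A})),d_{var})$ is continuous by Proposition \ref{weakstar_cont}(1) (in fact $2$-Lipschitz), so uniform convergence $\varphi_n\to\varphi$ transfers to norm convergence of the images $F_\mu(\varphi_n)=\widehat{\mu}\circ f_{\varphi_n}^{-1}$. Part (3) is the analogous statement, this time using the continuity of $F_\mu$ with respect to the pointwise metric topology on the domain and the weak* topology on the codomain, given by Proposition \ref{weakstar_cont}(2): pointwise metric convergence therefore implies weak* convergence of the induced measures.

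For part (2), assume $(\varphi_n)$ converges pointwise Borel metric to $\varphi$. By Proposition \ref{diestel-weak}, to show weak convergence of $(\widehat{\mu}\circ f_{\varphi_n}^{-1})$ to $\widehat{\mu}\circ f_{\varphi}^{-1}$, it suffices to verify that for every Borel set $B\subseteq St(\mathbb{A})$,
\[ \widehat{\mu}\bigl(f_{\varphi_n}^{-1}[B]\bigr)\longrightarrow\widehat{\mu}\bigl(f_{\varphi}^{-1}[B]\bigr).\]
But this is straightforward: for any $B\in Bor(St(\mathbb{A}))$ the estimate
\[ \bigl|\widehat{\mu}\bigl(f_{\varphi_n}^{-1}[B]\bigr)-\widehat{\mu}\bigl(f_{\varphi}^{-1}[B]\bigr)\bigr|\le\widehat{\mu}\bigl(f_{\varphi_n}^{-1}[B]\triangle f_{\varphi}^{-1}[B]\bigr)=d_\mu^{Bor}\bigl(f_{\varphi_n}^{-1}[B],f_{\varphi}^{-1}[B]\bigr)\]
holds, and the right-hand side tends to $0$ by the definition of pointwise Borel metric convergence applied to $B$. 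Hence the weak convergence follows.

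None of the three parts presents a real obstacle; the only nuance is that part (2) is not subsumed by Proposition \ref{weakstar_cont} as stated, so one genuinely has to combine the defining estimate of $d_\mu^{Bor}$ with Diestel's theorem rather than cite continuity of $F_\mu$ directly. In particular, one should note that this argument does \emph{not} give an analogue of the Lipschitz estimate of part (1)---weak convergence is obtained only via the sequential criterion of Proposition \ref{diestel-weak}.
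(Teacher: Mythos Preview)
Your proof is correct and follows exactly the same approach as the paper: parts (1) and (3) are read off from Proposition \ref{weakstar_cont}, and part (2) is obtained from Proposition \ref{diestel-weak} via the elementary estimate $|\widehat{\mu}(C)-\widehat{\mu}(D)|\le\widehat{\mu}(C\triangle D)$. You have simply written out the details that the paper leaves implicit.
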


\begin{proof}
	(1) and (3) are direct consequences of Proposition \ref{weakstar_cont}. (2) follows from Proposition \ref{diestel-weak}.
\end{proof}

\begin{rem}
Note here that the converse to Corollary \ref{convergences_measures_implications} may not hold even in the simplest case when all homomorphisms $\varphi_n$'s are the same. Indeed, let $(\mathbb{B},\mu)$ be a metric algebra and let each $\varphi_n$ be the identity homomorphism on $\mathbb{B}$. If $\varphi\in\mathcal{H}(\mathbb{B},\mathbb{B})$ is a homomorphism such that $\mu(\varphi(A))=\mu(A)$ and $\varphi(A)\neq A$ for some $A\in\mathbb{B}$, then the sequence $(\varphi_n)$ does not converge to $\varphi$ in any of the considered topologies, even though $\mu(\varphi_n(B))=\mu(\varphi(B))$ for every $n\in\omega$ and $B\in\mathbb{B}$.
\end{rem}

We now prove that if a Boolean algebra $\mathbb{A}$ has the Grothendieck property, then every pointwise metric convergent sequence of homomorphisms from $\mathbb{A}$ into a metric algebra is also pointwise Borel metric convergent. Recall that a sequence $(\mu_k)$ of Radon measures on a compact Hausdorff space $K$ is \textit{uniformly countably additive} (also, \textit{uniformly exhaustive}) if for every descending sequence $(E_n)$ of Borel subsets of $K$ such that $\bigcap_nE_n=\emptyset$ and every $\eps>0$ there is $N\in\omega$ such that $\big|\mu_k(E_n)\big|<\eps$ for every $n\ge N$ and $k\in\omega$. Equivalently, there is $N\in\omega$ such that $\big|\mu_k(E_m)-\mu_k(E_n)\big|<\eps$ for every $n,m\ge N$ and $k\in\omega$ (see \cite[Chapter VII, Theorem 10]{Diestel} for other equivalent definitions). We also say that $(\mu_n)$ is \textit{weakly convergent} to a Radon measure $\mu$ on $K$ if $\lim_{n\to\infty}\mu_n(B)=\mu(B)$ for every Borel subset $B$ of $K$. The Nikodym Convergence Theorem (\cite[Chapter VII, page 90]{Diestel}) asserts that every weakly convergent sequence of Radon measures is uniformly countably additive.

\begin{prop}\label{unif_ctbl_add_point_borel}
Let $\mathbb{A}$ be a Boolean algebra and $(\mathbb{B},\mu)$ be a metric Boolean algebra. Let $(\varphi_n)$ be a sequence in $\mathcal{H}(\mathbb{A},\mathbb{B})$ pointwise metric convergent to a homomorphism $\varphi\in\mathcal{H}(\mathbb{A},\mathbb{B})$. If the sequence $\big(\widehat{\mu}\circ f_{\varphi_n}^{-1}\big)$ is uniformly countably additive, then $(\varphi_n)$ converges pointwise Borel metric to $\varphi$. 
\end{prop}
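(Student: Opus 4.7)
The plan is to show that the family
\[\mathcal{F}=\bigl\{B\in Bor(St(\mathbb{A}))\colon \widehat{\mu}\bigl(f_{\varphi_n}^{-1}[B]\triangle f_\varphi^{-1}[B]\bigr)\to 0\bigr\}\]
coincides with $Bor(St(\mathbb{A}))$. For any clopen subset of $St(\mathbb{A})$ corresponding to $A\in\mathbb{A}$, one has $f_{\varphi_n}^{-1}[\widehat{A}]=\widehat{\varphi_n(A)}$ and $f_\varphi^{-1}[\widehat{A}]=\widehat{\varphi(A)}$, so $\widehat{\mu}\bigl(f_{\varphi_n}^{-1}[\widehat{A}]\triangle f_\varphi^{-1}[\widehat{A}]\bigr)=\mu\bigl(\varphi_n(A)\triangle\varphi(A)\bigr)\to 0$ by the assumed pointwise metric convergence. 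Since preimages commute with Boolean operations, $\mathcal{F}$ is also closed under complements and finite unions, hence is an algebra containing every clopen of $St(\mathbb{A})$.

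The next step is to upgrade $\mathcal{F}$ to a $\sigma$-algebra by establishing closure under countable increasing unions---this is where the uniform countable additivity hypothesis is used. Given $B_k\uparrow B$ with $B_k\in\mathcal{F}$ for every $k$, the sequence $(B\setminus B_k)_k$ is a decreasing sequence of Borel subsets of $St(\mathbb{A})$ with empty intersection, so the uniform countable additivity of $\bigl(\widehat{\mu}\circ f_{\varphi_n}^{-1}\bigr)$, together with the (automatic) countable additivity of the single Radon measure $\widehat{\mu}\circ f_\varphi^{-1}$, yields, for every $\varepsilon>0$, an index $K$ such that
\[\widehat{\mu}\bigl(f_{\varphi_n}^{-1}[B\setminus B_K]\bigr)<\varepsilon\quad\text{and}\quad\widehat{\mu}\bigl(f_\varphi^{-1}[B\setminus B_K]\bigr)<\varepsilon\]
for every $n$. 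Combined with $B_K\in\mathcal{F}$, the triangle inequality for $d_{\widehat{\mu}}^{Bor}$ then forces $B\in\mathcal{F}$. Hence $\mathcal{F}$ is a $\sigma$-algebra, and, containing every clopen, it contains the $\sigma$-algebra $\mathcal{B}_0$ generated by the clopens of $St(\mathbb{A})$.

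The remaining difficulty is that for a general (non-metrizable) Stone space $\mathcal{B}_0$ may be strictly smaller than $Bor(St(\mathbb{A}))$, so the monotone class argument alone does not finish the job. To bridge this gap I would introduce the auxiliary Radon probability measure
\[\nu=\frac{1}{2}\bigl(\widehat{\mu}\circ f_\varphi^{-1}\bigr)+\sum_{n=0}^\infty 2^{-n-2}\bigl(\widehat{\mu}\circ f_{\varphi_n}^{-1}\bigr)\]
on $St(\mathbb{A})$, which dominates every measure in the family $\{\widehat{\mu}\circ f_{\varphi_n}^{-1}\colon n\in\omega\}\cup\{\widehat{\mu}\circ f_\varphi^{-1}\}$. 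By standard inner/outer Radon regularity on a Stone space, clopens are dense in $Bor(St(\mathbb{A}))$ with respect to $d_\nu$---one sandwiches a Borel set between a compact set and an open set with small $\nu$-gap, and then covers the compact set by a finite union of clopens contained in the open set. Picking clopens $C_k$ with $\nu(B\triangle C_k)<2^{-k}$ and setting $B'=\liminf_k C_k\in\mathcal{B}_0$, a Borel--Cantelli argument gives $\nu(B\triangle B')=0$, and consequently $B\triangle B'$ is null for every measure in the family. Therefore
\[\widehat{\mu}\bigl(f_{\varphi_n}^{-1}[B]\triangle f_\varphi^{-1}[B]\bigr)=\widehat{\mu}\bigl(f_{\varphi_n}^{-1}[B']\triangle f_\varphi^{-1}[B']\bigr)\to 0,\]
using $B'\in\mathcal{B}_0\subseteq\mathcal{F}$. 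The main obstacle in this plan is precisely this Baire-versus-Borel step: uniform countable additivity is used only to close $\mathcal{F}$ under monotone limits, whereas passing from $\mathcal{B}_0$ to the full Borel $\sigma$-algebra relies on the master-measure construction and standard Radon regularity.
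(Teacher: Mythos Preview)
Your proof is correct and follows a genuinely different route from the paper's. The paper argues directly: given an arbitrary Borel set $B$ and $\varepsilon>0$, it uses outer regularity of each measure $\widehat{\mu}\circ f_{\varphi_k}^{-1}$ to choose clopen sets $A_k\supseteq B$ with $\widehat{\mu}\bigl(f_{\varphi_k}^{-1}[A_k\setminus B]\bigr)$ small, arranges the $A_k$ to be decreasing, then applies uniform countable additivity to the tail differences $A_l\setminus A_k$ together with pointwise metric convergence on the single clopen $A_l$ to bound $\widehat{\mu}\bigl(f_\varphi^{-1}[B]\triangle f_{\varphi_k}^{-1}[B]\bigr)$. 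No $\sigma$-algebra bookkeeping or auxiliary measure is needed; the Baire-versus-Borel issue never appears because regularity is invoked at the outset for the fixed set $B$.

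Your approach is more structural: you first show that the family $\mathcal{F}$ of ``good'' Borel sets is a $\sigma$-algebra containing the clopens---uniform countable additivity enters exactly to close $\mathcal{F}$ under monotone limits---and then use a dominating Radon measure $\nu$ and a Borel--Cantelli argument to reduce an arbitrary Borel set to a $\nu$-equivalent Baire set. This is slightly longer but has the merit of isolating the two ingredients (uniform countable additivity for the monotone closure, Radon regularity for the Borel-to-Baire reduction) as separate, reusable steps; the paper's telescoping argument is shorter but more ad hoc.
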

\begin{proof}
Write simply $f_n$ for $f_{\varphi_n}$ and $f$ for $f_\varphi$. Fix $\eps>0$ and a Borel subset $B$ of $St(\mathbb{A})$. Let $\delta=\eps/2$. We will show first that $\widehat{\mu}\big(f^{-1}[B]\setminus f_k^{-1}[B]\big)<\delta$ for almost all $k\in\omega$.

Since each measure $\widehat{\mu}\circ f_k^{-1}$ is (outer) regular on $St(\mathbb{A})$, for each $k\in\omega$ we can find  $A_k\in\mathbb{A}$ such that $B\subseteq A_k$ and
\[\tag{1}\widehat{\mu}\big(f_k^{-1}[A_k\setminus B]\big)<\delta/4.\]
By taking intersections, we may assume that $A_{k+1}\le A_k$ for every $k\in\omega$. By the uniform countable additiveness of $\big(\widehat{\mu}\circ f_n^{-1}\big)$, there is $k_0\in\omega$ such that 
\[\tag{2}\widehat{\mu}\big(f_k^{-1}[A_l\setminus A_k]\big)<\delta/4\]
for every $k>l>k_0$. Since obviously the measure $\widehat{\mu}\circ f^{-1}$ is also countably additive, there is $k_1\ge k_0$ such that
\[\tag{3}\widehat{\mu}\big(f^{-1}[A_l\setminus A_k]\big)<\delta/4\]
for every $k>l>k_1$. Now, fix $l=k_1+1$. By the pointwise metric convergence of $(\varphi_k)$ to $\varphi$, there is $k_2\ge l$ such that
\[\tag{4}\widehat{\mu}\big(f^{-1}[A_l]\triangle f_k^{-1}[A_l]\big)<\delta/4\]
for every $k>k_2$. Using (3), (4) and (2), for every $k>k_2$ we have:
\[\tag{5}\widehat{\mu}\big(f^{-1}[A_k]\triangle f_k^{-1}[A_k]\big)\le\]
\[\widehat{\mu}\big(f^{-1}[A_k]\triangle f^{-1}[A_l]\big)+\widehat{\mu}\big(f^{-1}[A_l]\triangle f_k^{-1}[A_l]\big)+\widehat{\mu}\big(f_k^{-1}[A_l]\triangle f_k^{-1}[A_k]\big)<\frac{3\delta}{4}.\]
(5) and (1) yield that for every $k>k_2$ it holds:
\[\widehat{\mu}\big(f^{-1}[B]\setminus f_k^{-1}[B]\big)\le\widehat{\mu}\big(f^{-1}[A_k]\setminus f_k^{-1}[B]\big)\le\widehat{\mu}\big(f^{-1}[A_k]\triangle f_k^{-1}[B]\big)\le\]
\[\widehat{\mu}\big(f^{-1}[A_k]\triangle f_k^{-1}[A_k]\big)+\widehat{\mu}\big(f_k^{-1}[A_k]\triangle f_k^{-1}[B]\big)\le\frac{3\delta}{4}+\frac{\delta}{4}=\delta,\]
so
\[\tag{6}\widehat{\mu}\big(f^{-1}[B]\setminus f_k^{-1}[B]\big)<\eps/2.\]
Changing $B$ to $B^c$ and doing exactly the same computations that led us to (6), we get that there is $k_3\ge k_2$ such that
\[\tag{7}\widehat{\mu}\big(f_k^{-1}[B]\setminus f^{-1}[B]\big)=\widehat{\mu}\big(f^{-1}[B^c]\setminus f_k^{-1}[B^c]\big)<\eps/2\]
for every $k>k_3$. Thus, by (6) and (7), for every $k>k_3$ we get:
\[\widehat{\mu}\big(f^{-1}[B]\triangle f_k^{-1}[B]\big)<\eps.\]
This proves that $(\varphi_n)$ converges pointwise Borel metric to $\varphi$.
\end{proof}


\begin{cor}\label{grothendieck}
Let $\mathbb{A}$ be a Boolean algebra and $(\mathbb{B},\mu)$ be a metric Boolean algebra. Let $(\varphi_n)$ be a sequence in $\mathcal{H}(\mathbb{A},\mathbb{B})$ pointwise metric convergent to a homomorphism $\varphi\in\mathcal{H}(\mathbb{A},\mathbb{B})$. If $\mathbb{A}$ has the Grothendieck property, then $(\varphi_n)$ converges pointwise Borel metric to $\varphi$.
\end{cor}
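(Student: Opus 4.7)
The plan is to chain together three results already established in the excerpt: Corollary \ref{convergences_measures_implications}(3), the Grothendieck property of $\mathbb{A}$, the Nikodym Convergence Theorem, and finally Proposition \ref{unif_ctbl_add_point_borel}. The goal is to verify the hypothesis of the latter proposition, namely that the sequence of induced Radon measures $\big(\widehat{\mu}\circ f_{\varphi_n}^{-1}\big)$ on $St(\mathbb{A})$ is uniformly countably additive.

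First I would observe that by Corollary \ref{convergences_measures_implications}(3), the pointwise metric convergence of $(\varphi_n)$ to $\varphi$ implies that the sequence $\big(\widehat{\mu}\circ f_{\varphi_n}^{-1}\big)$ converges in the weak* topology of $P(St(\mathbb{A}))$ to $\widehat{\mu}\circ f_\varphi^{-1}$. Since $\mathbb{A}$ has the Grothendieck property, every weak* convergent sequence of (signed) Radon measures on $St(\mathbb{A})$ is in fact weakly convergent, so $\big(\widehat{\mu}\circ f_{\varphi_n}^{-1}\big)$ converges weakly to $\widehat{\mu}\circ f_\varphi^{-1}$; equivalently, by Proposition \ref{diestel-weak}, $\big(\widehat{\mu}\circ f_{\varphi_n}^{-1}\big)(B)\to\big(\widehat{\mu}\circ f_\varphi^{-1}\big)(B)$ for every Borel subset $B\subseteq St(\mathbb{A})$.

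Next I would invoke the Nikodym Convergence Theorem (cited just before Proposition \ref{unif_ctbl_add_point_borel}), which guarantees that every weakly convergent sequence of Radon measures on a compact Hausdorff space is uniformly countably additive. Applied to $\big(\widehat{\mu}\circ f_{\varphi_n}^{-1}\big)$ on $St(\mathbb{A})$, this furnishes exactly the uniform countable additivity hypothesis required by Proposition \ref{unif_ctbl_add_point_borel}. That proposition then yields pointwise Borel metric convergence of $(\varphi_n)$ to $\varphi$, which is the conclusion.

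There is essentially no obstacle here beyond citing the right results in the correct order; the conceptual content of the corollary is already absorbed by Proposition \ref{unif_ctbl_add_point_borel}, and the role of the Grothendieck property is merely to bridge between the weak* convergence delivered automatically by pointwise metric convergence and the weak convergence needed to trigger Nikodym's theorem. The proof will therefore be a short paragraph with no computation.
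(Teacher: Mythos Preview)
Your proposal is correct and follows essentially the same argument as the paper: establish weak* convergence of the induced measures (the paper does this directly from convergence on clopens plus uniform boundedness, you cite Corollary~\ref{convergences_measures_implications}(3)), upgrade to weak convergence via the Grothendieck property, apply the Nikodym Convergence Theorem to get uniform countable additivity, and finish with Proposition~\ref{unif_ctbl_add_point_borel}.
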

\begin{proof}
Assume that $\mathbb{A}$ has the Grothendieck property. Let $(f_k)$ and $f$ be the continuous functions dual to $(\varphi_k)$ and $\varphi$, respectively. 
%
%
Since $\lim_{k\to\infty}\widehat{\mu}\big(f_k^{-1}[A]\big)=\widehat{\mu}\big(f^{-1}[A]\big)$ for every $A\in\mathbb{A}$, the sequence $\big(\widehat{\mu}\circ f_k^{-1}\big)$ is weakly* convergent to $\widehat{\mu}\circ f^{-1}$ (as it is uniformly bounded). The Grothendieck property of $\mathbb{A}$ implies that $\big(\widehat{\mu}\circ f_k^{-1}\big)$ is weakly convergent to $\widehat{\mu}\circ f^{-1}$. By the Nikodym Convergence Theorem, the sequence is uniformly countably additive, so the conclusion follows by Proposition \ref{unif_ctbl_add_point_borel}.
\end{proof}

Notice here that the measures $\widehat{\mu}\circ f_k^{-1}$ considered in the proof of Corollary \ref{grothendieck} are \textit{non-negative}, i.e. $\widehat{\mu}\circ f_k^{-1}(B)\ge0$ for every $B\in Bor(St(\mathbb{A}))$. It follows that in the proof we do not need the \textit{full} Grothendieck property of the algebra $\mathbb{A}$, but only the property for non-negative measures, i.e. if a sequence of non-negative measures on $\mathbb{A}$ is weakly* convergent, then it is weakly convergent. Such a variant of the Grothendieck property was introduced in Koszmider and Shelah \cite{KoszShel}, where it was called \textit{the positive Grothendieck property}. 
%
%

\begin{cor}\label{positive_grothendieck}
Let $\mathbb{A}$ be a Boolean algebra with the positive Grothendieck property and $(\mathbb{B},\mu)$ be a metric Boolean algebra. Let $(\varphi_n)$ be a sequence in $\mathcal{H}(\mathbb{A},\mathbb{B})$ pointwise metric convergent to a homomorphism $\varphi\in\mathcal{H}(\mathbb{A},\mathbb{B})$. Then, $(\varphi_n)$ converges pointwise Borel metric to $\varphi$.
\end{cor}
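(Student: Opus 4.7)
The plan is to essentially repeat the proof of Corollary \ref{grothendieck}, being careful to note at each step that only the positivity-restricted version of the Grothendieck property is actually invoked. The key observation (already made by the authors in the remark following Corollary \ref{grothendieck}) is that all measures arising from the dual functions of homomorphisms into a metric algebra are non-negative, so the full Grothendieck property is an overkill.

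Concretely, I would proceed as follows. Write $f_n = f_{\varphi_n}$ and $f = f_\varphi$ for the dual continuous maps, and consider the sequence $\bigl(\widehat{\mu}\circ f_n^{-1}\bigr)$ of Radon probability measures on $St(\mathbb{A})$. By Corollary \ref{convergences_measures_implications}(3) applied to the pointwise metric convergence of $(\varphi_n)$ to $\varphi$, this sequence converges weakly* to $\widehat{\mu}\circ f^{-1}$. Since each measure $\widehat{\mu}\circ f_n^{-1}$ is non-negative, the positive Grothendieck property of $\mathbb{A}$ applies directly and upgrades weak* convergence to weak convergence.

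Once weak convergence is established, the Nikodym Convergence Theorem (as recalled in the text preceding Proposition \ref{unif_ctbl_add_point_borel}) yields that $\bigl(\widehat{\mu}\circ f_n^{-1}\bigr)$ is uniformly countably additive. Together with the hypothesis that $(\varphi_n)$ is pointwise metric convergent to $\varphi$, this is exactly the input needed for Proposition \ref{unif_ctbl_add_point_borel}, whose conclusion is that $(\varphi_n)$ converges pointwise Borel metric to $\varphi$, which is precisely the desired statement.

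I do not anticipate any genuine obstacle here: the entire argument is a bookkeeping exercise verifying that the hypotheses of Propositions \ref{weakstar_cont} (via Corollary \ref{convergences_measures_implications}), the Nikodym Convergence Theorem, and Proposition \ref{unif_ctbl_add_point_borel} chain together correctly. The only point worth emphasising in the write-up is the justification that the positive (rather than full) Grothendieck property suffices, since this is the one place where the current corollary differs substantively from Corollary \ref{grothendieck}.
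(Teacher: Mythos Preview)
Your proposal is correct and matches the paper's approach exactly: the paper does not give a separate proof of Corollary \ref{positive_grothendieck}, but derives it from the observation (made in the paragraph immediately preceding it) that the measures $\widehat{\mu}\circ f_k^{-1}$ in the proof of Corollary \ref{grothendieck} are non-negative, so only the positive Grothendieck property is needed. Your write-up spells out precisely this chain of reasoning.
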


The following weaker form of the $\sigma$-completeness was also introduced in \cite{KoszShel}: a Boolean algebra $\mathbb{A}$ is said to have \textit{the Weak Subsequential Separation Property} (\textit{the WSSP}) if for every sequence $(A_n)$ of pairwise disjoint elements of $\mathbb{A}$ there is an element $A\in\mathbb{A}$ such that both of the sets $\big\{n\in\omega\colon A_n\le A\big\}$ and $\big\{n\in\omega\colon A_n\wedge A=0\big\}$ are infinite. \cite[Proposition 2.4]{KoszShel} asserts that every Boolean algebra with the WSSP has the positive Grothendieck property. Note that there exists a Boolean algebra with the WSSP (and hence with the positive Grothendieck property), but without the Grothendieck property, see \cite[Proposition 2.5]{KoszShel}.

Recall that by the Eberlein--\v{S}mulian theorem and the Dieudonn\'e--Grothendieck characterization of weakly compact sets of Radon measures on compact spaces (see \cite[Chapter VII, Theorem 14]{Diestel}), a weakly* convergent sequence $(\mu_n)$ of measures on a given compact space $K$ is weakly convergent if and only if there is no antichain $(U_k)$ of open subsets of $K$, $\eps>0$ and subsequence $(\mu_{n_k})$ such that $|\mu_{n_k}(U_k)|\ge\eps$ for every $k\in\omega$. 

\begin{prop}
Let $\mathbb{A}$ be a Boolean algebra with the positive Grothendieck property, $(\mathbb{B},\mu)$ a metric Boolean algebra, and $\varphi,\varphi_n\in\mathcal{H}(\mathbb{A},\mathbb{B})$, $n\in\omega$. Let $(A_n)$ be an antichain in $\mathbb{A}$ such that for some $\eps>0$ and every $n\in\omega$ we have $d_\mu\big(\varphi_n(A_n),\varphi(A_n)\big)\ge\eps$. Then, $(\varphi_n)$ is not pointwise metric convergent to $\varphi$.
\end{prop}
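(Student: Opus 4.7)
I will argue by contradiction, combining the positive Grothendieck property (to upgrade the assumed pointwise metric convergence to weak convergence of the induced measures on $St(\mathbb{A})$) with the Dieudonn\'e--Grothendieck/Eberlein--\v{S}mulian characterization recalled just before the statement (to locate the obstruction on the antichain $(A_n)$). Suppose, for contradiction, that $(\varphi_n)$ converges pointwise metric to $\varphi$. By Corollary \ref{convergences_measures_implications}(3) the induced Radon probability measures $\mu_n:=\widehat{\mu}\circ f_{\varphi_n}^{-1}$ on $St(\mathbb{A})$ converge weakly$^*$ to $\mu_\infty:=\widehat{\mu}\circ f_\varphi^{-1}$, and since these are non-negative and $\mathbb{A}$ has the positive Grothendieck property, the convergence is in fact weak.

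Next I establish the quantitative estimate $\mu(\varphi_n(A_n))\ge\eps/2$ for all sufficiently large $n$. Since $\varphi$ is a homomorphism and $(A_n)$ is an antichain in $\mathbb{A}$, the family $(\varphi(A_n))$ is an antichain in $\mathbb{B}$, and since $\mu$ is a probability measure, $\sum_n\mu(\varphi(A_n))\le 1$, so $\mu(\varphi(A_n))\to 0$. The elementary estimate $\mu(C\triangle D)\le\mu(C)+\mu(D)$ applied to $C=\varphi_n(A_n)$ and $D=\varphi(A_n)$ then yields
\[\mu(\varphi_n(A_n))\ge d_\mu(\varphi_n(A_n),\varphi(A_n))-\mu(\varphi(A_n))\ge\eps-\mu(\varphi(A_n))\ge\eps/2\]
for all $n\ge N$, for some $N\in\omega$.

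Identifying each $A_n\in\mathbb{A}$ with the corresponding clopen subset of $St(\mathbb{A})$, the tail $(A_n)_{n\ge N}$ is then an antichain of non-empty open subsets of $St(\mathbb{A})$ satisfying $|\mu_n(A_n)|=\mu(\varphi_n(A_n))\ge\eps/2$. By the characterization cited just before the proposition, such an antichain prevents the weakly$^*$ convergent sequence $(\mu_n)$ from being weakly convergent---a direct contradiction with the conclusion of the first paragraph. I expect the main obstacle to be precisely the middle estimate: the hypothesis $d_\mu(\varphi_n(A_n),\varphi(A_n))\ge\eps$ in isolation provides no lower bound on $\mu(\varphi_n(A_n))$, and one must exploit the antichain condition on $(A_n)$ in order to force $\mu(\varphi(A_n))\to 0$ and so transfer the mass of the symmetric difference onto $\varphi_n(A_n)$ itself.
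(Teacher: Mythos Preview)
Your proof is correct and follows essentially the same route as the paper's: assume pointwise metric convergence, pass to the induced measures $\widehat{\mu}\circ f_{\varphi_n}^{-1}$, use the antichain condition to force $\mu(\varphi(A_n))\to 0$ and hence $\mu(\varphi_n(A_n))\ge\eps/2$ eventually, and then invoke the Dieudonn\'e--Grothendieck criterion together with the positive Grothendieck property to obtain the contradiction. The only cosmetic difference is the order in which you apply the positive Grothendieck property (you upgrade to weak convergence first and contradict it at the end, whereas the paper derives non-weak convergence first and then contradicts weak* convergence via positive Grothendieck).
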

\begin{proof}
For the sake of contradiction assume that $(\varphi_n)$ is pointwise metric convergent to $\varphi$. Then, by Corollary \ref{convergences_measures_implications}.(3), the sequence $\big(\widehat{\mu}\circ f^{-1}_{\varphi_n}\big)$ is weakly* convergent to $\widehat{\mu}\circ f^{-1}_\varphi$. Since $(A_n)$ is an antichain in $\mathbb{A}$, the sequence $(\varphi(A_n))$ is an antichain in $\mathbb{B}$ and thus $\lim_{n\to\infty}\mu(\varphi(A_n))=0$. From the assumption that $d_\mu\big(\varphi_n(A_n),\varphi(A_n)\big)\ge\eps$ for every $n\in\omega$ it follows that there is $N\in\omega$ such that $\mu(\varphi_n(A_n))\ge\eps/2$ for every $n>N$, so $\big(\widehat{\mu}\circ f_{\varphi_n}^{-1}\big)(A_n)\ge\eps/2$ for every $n>N$. By the aforementioned characterisation of weakly convergent sequences of measures on compact spaces, the sequence $\big(\widehat{\mu}\circ f^{-1}_{\varphi_n}\big)$ is not weakly convergent to $\widehat{\mu}\circ f_\varphi^{-1}$, and hence, by the positive Grothendieck property of $\mathbb{A}$, not weakly* convergent, which is a contradiction.
\end{proof}

\section{Examples of sequences of homomorphisms}\label{sec-examples}

In this section we will present certain examples distinguishing different types of convergence. First, we summarize in the following self-explanatory diagram all the implications which we have already proved or which we are going to prove.\\[5pt]


\begin{center}
	\begin{tikzcd}[arrows=Rightarrow]
		\mbox{uniform}  \arrow[r, "1"]   & \mbox{pointwise Borel metric} \arrow[r, "2"]  & \arrow[l, dotted,  "\mbox{\scriptsize{ the positive Grothendieck property}}" swap, "4", bend right] \mbox{pointwise metric}  \\
		& \arrow[lu, dotted, "\mbox{ \footnotesize{Seever's interpolation property}}", "5" swap] \mbox{pointwise algebraic} \arrow[ru, "3"] &  
\end{tikzcd}
\end{center}
\label{diagram}
.\\[5pt]

Of those, (2) is trivial, (1) follows from Corollary \ref{uniform_metrics_convergence} and (3) is proved in Proposition \ref{algebraic_metric}. No other implication holds in general. (4) holds for Boolean algebras with the positive Grothendieck
property and hence, in particular, for Boolean algebras with the WSSP (Corollary \ref{positive_grothendieck}). (5) is true for Boolean algebras with the interpolation property (or, the property (I)) introduced by Seever (see Section \ref{secEfimov}). Corollary \ref{alg_pointwise_coincide} implies that (3) may be reversed for $\mathbb{B}=\mathbbm{2}$, but we skipped that in the diagram, since it is trivial. 

Note that it is easy to obtain a sequence which satisfies simultaneously all of the above types of convergence (e.g. a trivial sequence). Below we present examples of sequences of homomorphisms satisfying various other combinations of types of convergence and witnessing that the above diagram is complete.

In what follows we will often abuse the notation identifying equivalence classes (particularly elements of $\mathbb{M}$) with their representatives.

\subsection{A pointwise algebraically convergent sequence which is not pointwise Borel metric convergent}\label{ex_alg_not_borel}

To see that the pointwise algebraic convergence does not imply the pointwise Borel metric convergence, consider the Cantor algebra $\mathbb{C}$ and a non-trivial sequence $(x_n)$ of points in the Cantor space $St(\mathbb{C})=2^\omega$ convergent to some $x\in 2^\omega$. For each
$n\in\omega$ define $\varphi_n\in\mathcal{H}(\mathbb{C},\mathbbm{2})$ by the condition: $\varphi_n(A) = 1$ iff $x_n\in A$, where $A\in\mathbb{C}$. Similarly, define $\varphi\in\mathcal{H}(\mathbb{C},\mathbbm{2})$ by the condition: $\varphi(A) = 1$ iff $x\in A$, where $A\in\mathbb{C}$. Since $\lim_{n\to\infty}x_n=x$ in $St(\mathbb{C})$, $(\varphi_n)$ is pointwise algebraically convergent. On the other hand, $(\varphi_n)$ does not converge pointwise Borel metric to $\varphi$, since for the Borel set $\{x\}$ and every $n\in\omega$ we have:
\[\widehat{\lambda}_1\big(f_{\varphi_n}^{-1}\big[\{x\}\big]\triangle f_{\varphi}^{-1}\big[\{x\}\big]\big)=\lambda_1(0\triangle 1)=1.\]

Of course, in the above example instead of $\mathbb{C}$ we may use any Boolean algebra such that its Stone space contains a non-trivial convergent sequence.

\subsection{A uniformly convergent sequence which is not pointwise algebraically convergent}\label{ex_unif_not_alg}

Here we will present an example of a sequence of homomorphisms in $\mathcal{H}(\mathbb{C},\mathbb{M})$ which converges uniformly but not pointwise algebraically.

Let $\{s_n\colon n\in \omega\}$ be any enumeration of $2^{<\omega}$. For $x\in 2^\omega$ define $\overline{x} \in 2^\omega$ by 
$$
\overline{x}(k) = 
\begin{cases}
	1-x(k), \mbox{ if }k=0,\\
	x(k), \mbox{ otherwise,}
\end{cases}
$$

so $\overline{x}$ is just $x$ but with the $0$-th bit flipped. For $s\in 2^{<\omega}$ and $x\in 2^\omega$ we will say that $x$ \emph{agrees with} $s$, if $x(k+1) = s(k)$ for each $k< |s|$
(so, in other words, $s$ is a segment of $x$ starting with the $1$-st bit).

Now for $n\in \omega$ define $g_n\colon 2^\omega \to 2^\omega$ by
$$
g_n(x) = 
\begin{cases}
	\overline{x}, \mbox{ if } x \mbox{ agrees with }s_n,\\
	x, \mbox{ otherwise.}
\end{cases}
$$

It is easy to check that $\psi_n\colon \mathbb{C} \to \mathbb{M}$ defined by $\psi_n(A) = g_n[A] = g_n^{-1}[A]$ is a homomorphism. Let $\psi$ be the natural embedding of $\mathbb{C}$ into $\mathbb{M}$, i.e. $\psi(A) = A$ for every $A\in\mathbb{C}$. (Note that here we treat $\mathbb{C}$ as the algebra of clopen subsets of $2^\omega$ and writing $g_n[A]$ we mean the appropriate equivalence class, that is, an element of $\mathbb{M}$.)

\begin{prop} $(\psi_n)$ converges uniformly to $\psi$ but it does not converge pointwise algebraically.
\end{prop}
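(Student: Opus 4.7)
The claim naturally splits into establishing uniform convergence of $(\psi_n)$ to $\psi$, and refuting pointwise algebraic convergence.

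For uniform convergence, the key observation is that $g_n$ is the identity outside the clopen set $E_n = \{x \in 2^\omega : x \text{ agrees with } s_n\}$, which has $\lambda(E_n) = 2^{-|s_n|}$ since agreement with $s_n$ fixes the coordinates $x(1),\dots,x(|s_n|)$. Consequently, for every $A \in \mathbb{C}$,
\[\psi_n(A) \triangle \psi(A) = g_n^{-1}[A] \triangle A \subseteq E_n,\]
so $d_{hom}(\psi_n, \psi) \leq 2^{-|s_n|}$. Since $\{s_n : n \in \omega\}$ bijectively enumerates the countable set $2^{<\omega}$ and for each $k$ there are only finitely many strings of length at most $k$, we have $|s_n| \to \infty$, and uniform convergence follows.

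To refute pointwise algebraic convergence I would first observe that the only candidate limit is $\psi$: by Proposition \ref{algebraic_metric} any pointwise algebraic limit coincides with the pointwise metric limit, which by the uniform convergence above is $\psi$. Testing the definition at the generator $A = [0] = \{x \in 2^\omega : x(0) = 0\}$, I would show that $\bigwedge_{m > n} \psi_m([0]) = 0$ in $\mathbb{M}$ for every $n$, which suffices, since then $\bigvee_n \bigwedge_{m > n} \psi_m([0]) = 0 \neq [0] = \psi([0])$. Because the countable meet in $\mathbb{M}$ is represented by the set-theoretic intersection $\bigcap_{m > n} g_m^{-1}[A]$, the task reduces to showing this intersection is $\lambda$-null (in fact, empty). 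Splitting by whether $x \in A$ or $x \notin A$ and unfolding the definition of $g_m$ (using $g_m(x) = x$ off $E_m$ and $g_m(x) = \overline{x}$ on $E_m$), one sees that $x \in A$ lies in the intersection iff $x$ does not agree with $s_m$ for any $m > n$, while $x \notin A$ lies in the intersection iff $x$ agrees with $s_m$ for every $m > n$.

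I claim both conditions are vacuous. For the first, the constraint forces every initial segment of $(x(1), x(2), \ldots)$ to belong to the finite collection $\{s_0, \ldots, s_n\}$, whose lengths are bounded---impossible, since the initial segment of length exceeding $\max_{i \le n}|s_i|$ cannot appear there. For the second, picking any length $k$ large enough that at least two strings of length $k$ lie outside $\{s_0, \ldots, s_n\}$, the constraint would force those two distinct strings both to be the unique length-$k$ initial segment of $(x(1), x(2), \ldots)$, also impossible. In my view the heart of the argument is precisely this last combinatorial step: although each $g_m$ differs from the identity on a set of vanishingly small measure, the sets $E_m$ collectively saturate $2^\omega$ in the sense that every point belongs to $E_m$ for infinitely many $m$, and this is what makes the tail meet collapse to zero in $\mathbb{M}$, separating pointwise algebraic convergence from the much coarser uniform convergence.
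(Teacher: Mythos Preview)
Your proof is correct and follows essentially the same route as the paper's: the uniform convergence is obtained from $\psi_n(A)\triangle\psi(A)\subseteq E_n$ with $\lambda(E_n)=2^{-|s_n|}\to 0$, and the failure of pointwise algebraic convergence is witnessed at the generator $A=[0]$ by showing $\bigcap_{m>n}g_m^{-1}[A]=\emptyset$ via the same case split on $x\in A$ versus $x\notin A$. Your treatment is slightly more explicit in a couple of places (you spell out why $|s_n|\to\infty$ and why the pointwise algebraic limit, if it existed, would have to be $\psi$), but the underlying argument is the same.
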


\begin{proof}
	For each $s\in 2^{<\omega}$ let $A_s = \{x\in 2^\omega\colon x\mbox{ agrees with }s\}$. Clearly, $\lambda(A_s) = 1/2^{|s|}$ and so $\lim_{n\to\infty}\lambda(A_{s_n})=0$. But $\psi_n$ coincides with $\psi$ on $2^\omega \setminus A_{s_n}$ and so $(\psi_n)$
	converges uniformly to $\psi$.

	Let $A=\{x\in 2^\omega\colon x(0)=0\}$. 
	Fix $x\in 2^\omega$ and $n\in\omega$. If $x\in A$, then there is $m>n$ such that $x$ agrees with $s_m$ and so $x\not\in g_m[A]$. Hence, $A\cap\bigcap_{m>n}g_m[A]=\emptyset$. If $x\not\in A$, then there is $m>n$ such that $x$ does not agree with $s_m$ and so $x\not\in g_m[A]$. Hence, $A^c\cap\bigcap_{m>n}g_m[A]=\emptyset$, and so $\bigcap_{m>n}g_m[A]=\emptyset$. Since $n$ was arbitrary, we get that $\bigcup_n\bigcap_{m>n}g_m[A]=\emptyset$ and hence $\bigvee_n \bigwedge_{m>n} \psi_n(A) = 0 \ne A=\psi(A)$, which implies that $(\varphi_n)$ does not converge pointwise algebraically to $\psi$.
\end{proof}

Note that since the pointwise algebraic convergence implies the pointwise metric convergence, it is not possible that $(\psi_n)$ converges pointwise algebraically to a homomorphism other than $\psi$.

\subsection{A pointwise metric convergent sequence which is neither pointwise Borel metric convergent nor pointwise algebraically convergent}\label{ex_metric_not_borel_alg}

Combining methods used in Sections \ref{ex_alg_not_borel} and \ref{ex_unif_not_alg}, we may easily obtain an example of a sequence of homomorphisms in $\mathcal{H}(\mathbb{C},\mathbb{M})$ 
which is only pointwise metric convergent. 

For each $n\in\omega$, let $\varphi_n \colon \mathbb{C} \to \mathbbm{2}$ be defined as in Section \ref{ex_alg_not_borel} and let $\psi_n\colon \mathbb{C} \to \mathbb{M}$ be defined as in Section \ref{ex_unif_not_alg}. Define $\rho_n\colon \mathbb{C} \to \mathbbm{2}\times \mathbb{M}$ by the formula $\rho_n(A) = (\varphi_n(A),\psi_n(A))$ for every $A\in\mathbb{C}$. 

By the same arguments as in Sections \ref{ex_alg_not_borel} and \ref{ex_unif_not_alg}, the sequence $(\rho_n)$ is pointwise metric convergent to the homomorphism $\rho=(\varphi,\psi)$, yet, again by the same arguments, it is not pointwise Borel
metric convergent nor pointwise algebraically convergent. Of course, $\mathbbm{2}\times \mathbb{M}$ can be embedded into $\mathbb{M}$, so $(\rho_n)$ can be treated as a sequence in $\mathcal{H}(\mathbb{C},\mathbb{M})$.

\subsection{A pointwise Borel metric convergent sequence which is neither uniformly convergent nor pointwise algebraically convergent}\label{ex_borel_not_unif_alg}



For every $n\in\omega$ define $flip_n\colon 2^\omega \to 2^\omega$ by $flip_n(x) = x + \chi_{\{n\}}$, where $+$ is pointwise modulo $2$ (so, in short, $flip_n$ flips the $n$-th coordinate of $x$). In the analogous way we define $flip_n$ also on $2^{<\omega}\setminus 2^{<n}$
(of course, $flip_n(s)$ makes sense only if $n<|s|$). 
Let $\varphi_n\colon \mathbb{M} \to \mathbb{M}$ be the
	homomorphism induced by $flip_n$, i.e. the homomorphism defined by $\varphi_n(B) = flip_n[B]$. 
	Then $(\varphi_n)$ is convergent to the identity on $\mathbb{M}$ in the pointwise metric topology. Indeed, first notice that $(\varphi_n(A))$ converges trivially to $A$ if $A$ is a clopen subset of $2^\omega$. Also, for every $\varepsilon>0$ and every measurable set $B$ there is a clopen set $A$ such that $d_\lambda(B,A)<\varepsilon$. Since $\varphi_n$ is an isometry for each $n$, we get that $\varphi_n(B)$ converges with respect to the pointwise metric topology for every Borel subset $B$ of $2^\omega$.  

	As $\mathbb{M}$ has the Grothendieck property, $(\varphi_n)$ is automatically pointwise Borel metric convergent.
 On the other hand, $d_\lambda\big( \varphi_n(X_n), X_n\big)=1$ for each $n$, where $X_n = \{x\in 2^\omega\colon x(n)=0\}$, so $(\varphi_n)$ does not converge uniformly.

	The rest of this section will be devoted to the proof that $(\varphi_n)$ does not converge pointwise algebraically. The crucial fact is
	the following.

\begin{prop}\label{strange} There is a closed subset $B$ of $2^\omega$ such that $\lambda(B)>1/2$ and for each $x\in B$ we have $flip_n(x)\notin B$ for infinitely many $n$'s.
\end{prop}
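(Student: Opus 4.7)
The plan is to construct $B$ as a product of subsets of successive finite blocks of coordinates, using perfect binary Hamming codes to control the boundary structure. Partition $\omega$ into consecutive finite intervals $I_1, I_2, \ldots$ with $|I_k| = m_k := 2^{r_k}-1$, where the integers $r_k\geq 2$ grow fast enough (e.g.\ $r_k = k+2$). For each $k$ pick a perfect $[m_k, m_k-r_k, 3]$-Hamming code $C_k\subseteq 2^{I_k}$; the decisive property is that the Hamming balls of radius $1$ about elements of $C_k$ partition $2^{I_k}$, so $|C_k| = 2^{m_k}/(m_k+1)$ and, moreover, every non-codeword has exactly one single-bit flip landing in $C_k$ while every codeword has all its $m_k$ flip-neighbours outside $C_k$. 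Set $B_k := 2^{I_k}\setminus C_k$ and
\[ B := \bigl\{x\in 2^\omega : x\clrest I_k \in B_k \text{ for every } k\in\omega\bigr\}. \]

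The set $B$ is closed, since each condition ``$x\clrest I_k \in B_k$'' is clopen in $2^\omega$ and $B$ is their intersection. Independence of the blocks under $\lambda$ gives
\[ \lambda(B) = \prod_k \frac{|B_k|}{2^{m_k}} = \prod_k \frac{m_k}{m_k+1} = \prod_k \bigl(1 - 2^{-r_k}\bigr), \]
and using the elementary inequality $\prod_k (1-t_k) \geq 1 - \sum_k t_k$ for $t_k\in[0,1]$, the choice $r_k = k+2$ yields $\lambda(B) \geq 1 - \sum_{k\geq 1} 2^{-(k+2)} = 3/4 > 1/2$.

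For the flipping property, fix $x\in B$ and $k\in\omega$. Since $x\clrest I_k \in B_k = 2^{I_k}\setminus C_k$, perfectness of $C_k$ provides a (unique) coordinate $n_k\in I_k$ such that flipping the $n_k$-th bit of $x\clrest I_k$ produces an element of $C_k$; in particular $flip_{n_k}(x)\clrest I_k \in C_k$, and hence $flip_{n_k}(x)\notin B$. As the blocks $I_k$ are pairwise disjoint, the integers $n_k$, $k\in\omega$, are distinct, yielding infinitely many $n$ with $flip_n(x)\notin B$, as required.

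The only delicate point is pairing the measure estimate with the boundary (domination) structure: one needs the complement of $C_k$ to be simultaneously large (so that $\lambda(B)>1/2$) and dominated by $C_k$ in the Hamming graph on $2^{I_k}$ (so that every element of $B_k$ has a neighbour outside $B_k$). This is exactly the balance provided by perfect Hamming codes, which is why they enter the argument as signalled in the introduction; a non-perfect code would force one either to shrink $B_k$ (losing measure) or to lose the guaranteed neighbour in $C_k$ for every point of $B_k$.
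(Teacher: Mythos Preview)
Your proof is correct, and in fact cleaner than the paper's own argument. Both proofs use perfect Hamming codes as the key combinatorial ingredient, but the constructions differ.

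You take a direct product: partition $\omega$ into blocks $I_k$ of lengths $2^{r_k}-1$, set $B_k=2^{I_k}\setminus C_k$ with $C_k$ a perfect code, and let $B=\prod_k B_k$. The flipping property then follows immediately block-by-block, and the measure is an explicit product giving $\lambda(B)\ge 3/4$. The paper instead builds an increasing sequence of ``forbidden'' prefix sets $C_n\subseteq 2^{d_n}$ by induction: at stage $n$ it extends $C_n$ to $C_{n+1}$ by keeping all prolongations of old forbidden prefixes and appending a code on the new block to the remaining prefixes, while tracking the density to keep it below $1/2$; then $B$ is the complement of $\bigcup_n\bigcup_{c\in C_n}[c]$. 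This requires verifying compatibility conditions (extensions of $C_n$ lie in $C_{n+1}$; every non-forbidden prefix has a flip in the new block landing in $C_{n+1}$) and a somewhat delicate density estimate.

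Your product construction avoids all of this bookkeeping and yields a better measure bound out of the box. The only feature the paper's construction makes marginally more visible is that the target measure can be pushed arbitrarily close to $1$ (their Remark after the proof), but your argument does this just as easily by taking $r_k$ larger.
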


Perhaps quite unexpectedly, to prove it we will use several notions from coding theory. For $n\in \omega$ the function $\rho_n\colon \{0,1\}^n \to \omega$ defined by
\[\rho_n(x,y) = \big|\{i\leq n\colon x(i)\ne y(i)\}\big|\] 
is clearly a metric (called  
\emph{the Hamming metric}). We will call a subset $A\subseteq \{0,1\}^n$ \emph{a code} if for every $x\in \{0,1\}^n$ there is $y\in A$ such that $\rho_n(x,y)\leq 1$. In coding theory this property is one of the conditions defining so-called 1-error correcting
codes.

In what follows we will use the fact that there is a perfect sphere packing in $\{0,1\}^n$ for each $n$ of the form $n=2^m-1$, $m\in \omega$, i.e. $\{0,1\}^n$ is a disjoint union of the closed balls of radius $1$ with respect to the Hamming metric (see \cite{Hamming}). The set of centers of those
balls is a code, which we will call \emph{a perfect code} (in the coding theory terminology this is the same as a perfect 1-error-correcting Hamming code).
There is a rich literature concerning Hamming codes and it is quite easy to learn \emph{how} to find perfect codes. It seems, however, that it is not so easy to find a proof \emph{why} those algorithms work, so we sketch here the argument, using
the algorithm for creating perfect codes presented e.g. in \cite[Section 2.5]{Designs}. 

\begin{prop}\label{perfect_code_exists} If $n  = 2^m -1$, for $m\in \mathbb{N}$, then $\{0,1\}^n$ is a disjoint union of the closed balls of radius $1$ with respect to the Hamming metric.
\end{prop}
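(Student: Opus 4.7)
My plan is to prove Proposition~\ref{perfect_code_exists} by exhibiting an explicit code $C \subseteq \{0,1\}^n$, namely the binary Hamming code, and then verifying that the closed $1$-balls centred at its elements tile $\{0,1\}^n$. Set $n = 2^m - 1$ and regard $\{0,1\}$ as the field $\mathbb{F}_2$. The construction is to let $H$ be the $m \times n$ matrix over $\mathbb{F}_2$ whose columns are exactly the $n = 2^m - 1$ distinct nonzero vectors of $\{0,1\}^m$, listed in some arbitrary order, and take
\[ C := \{x \in \{0,1\}^n : Hx = 0\}, \]
with the product $Hx$ computed coordinatewise modulo $2$.

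The argument then splits into three short, essentially independent checks. First, since the $m$ standard basis vectors of $\{0,1\}^m$ appear among the columns of $H$, $H$ has rank $m$, so $C$ is an $(n-m)$-dimensional $\mathbb{F}_2$-subspace and $|C| = 2^{n-m}$. Second, the closed balls $B(c,1) = \{x \in \{0,1\}^n : \rho_n(x,c) \le 1\}$ for $c \in C$ are pairwise disjoint: because $C$ is a linear subspace and $\rho_n(c,c') = \rho_n(0, c+c')$, this reduces to showing that every nonzero element of $C$ has Hamming weight at least $3$. But a codeword of weight $1$ would have $Hc$ equal to a single column of $H$, which is nonzero by construction, while a codeword of weight $2$ would witness that two \emph{distinct} columns of $H$ sum to $0$ in $\mathbb{F}_2$, which is impossible since in characteristic $2$ the sum of two distinct vectors is nonzero. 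Third, each closed $1$-ball has exactly $1 + n = 2^m$ elements, so a direct count gives
\[ \Big| \bigsqcup_{c \in C} B(c,1) \Big| = |C| \cdot 2^m = 2^{n-m} \cdot 2^m = 2^n = \big|\{0,1\}^n\big|. \]
Combined with the disjointness from the previous step, this forces the balls to cover the whole cube, and so they form the desired partition.

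The only genuinely inventive step is the first one---choosing the parity-check matrix $H$---after which the argument is driven purely by elementary linear algebra over $\mathbb{F}_2$ together with the sphere-packing coincidence $n + 1 = 2^m$ that makes the Hamming bound tight. I do not expect any substantive obstacle beyond recognising this construction, which is why in the paper itself it is reasonable for the authors merely to sketch the argument and refer the reader to a standard reference in coding theory.
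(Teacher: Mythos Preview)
Your proof is correct and follows essentially the same approach as the paper: construct the Hamming code $C$ as the kernel of the parity-check matrix $H$ whose columns are the nonzero vectors of $\mathbb{F}_2^m$, verify that distinct codewords are at Hamming distance at least $3$ (hence the radius-$1$ balls are disjoint), compute $|C|=2^{n-m}$, and finish by counting. The only cosmetic differences are that you obtain $|C|$ via rank--nullity while the paper argues directly that each $(n-m)$-tuple extends uniquely to a codeword, and you reduce disjointness to the minimum-weight statement via linearity whereas the paper checks the distance-$1$ and distance-$2$ cases for an arbitrary pair $(v,w)$.
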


\begin{proof}
	Let $n = 2^m - 1$. We endow the set $\{0,1\}^n$ with the algebraic structure and see it as $\mathbb{F}^n_2$ (i.e. $n$-dimensional vector space over $\mathbb{F}_2$, the field consisting of two elements). Let $H$ be a matrix created by putting all the
	non-zero elements of $\mathbb{F}^m_2$ as its columns. Then $H$ is an $(n\times m)$-matrix. Define 
	\[ C = \big\{ v\in\mathbb{F}^n_2\colon Hv = 0_{\mathbb{F}^n_2} \big\}. \]
	In other words, $v\in C$ if and only if $\sum_{l<n} H(k,l) \cdot v(l) = 0$ for each $k<m$.

	We claim that $C$ is a perfect code, i.e. the collection $\mathcal{K} = \{K_1(c)\colon c\in C\}$, where $K_r(x)$ is a closed ball of radius $r$ centered at $x$, forms a partition of $\mathbb{F}^n_2$.

First, notice that if $v \in C$ and $1 \leq \rho_n(v, w) \leq 2$ for some $w\in\mathbb{F}_2^n$, then $w \notin C$. Indeed, suppose first that there is exactly one $i<n$ such that $v(i) \ne w(i)$. There is $k< m$ such that $H(k,i)=1$ and then $\sum_{l<n} H(k,l)\cdot
w(l) = 1$ and so
$w\notin C$. If there are exactly two indices $i<j< n$ such that $v(i) \ne w(i)$ and $v(j) \ne w(j)$, then for $k<m$ such that $H(k,i) \ne H(k,j)$ we again have $\sum_{l<n} H(k,l)
\cdot w(l) = 1$, and $w\notin C$.

So, by the above, the family $\mathcal{K}$ is pairwise disjoint. Furthermore, we may arrange columns of $H$ as we wish, so we may suppose that the last $m$ columns of $H$ consist of vectors with only one $1$. In this way it
is easy to see that every vector in $\mathbb{F}_2^{n-m}$ has exactly one extension to the vector in $\mathbb{F}_2^n$ which belongs to $C$ and so $|C| = 2^{n-m}$. As every element of $\mathcal{K}$ consists of $n+1$ points and every point is covered by
at most one element of $\mathcal{K}$, we see that $\mathcal{K}$
covers $(n+1) \cdot 2^{n-m} = 2^n$ points, i.e. the whole space $\mathbb{F}_2^n$. 
\end{proof}

Let $c_n$ be
the smallest size of a code in $\{0,1\}^n$ and let $a_n = c_n/2^n$ (so $a_n$ measures how big portion of whole $\{0,1\}^n$ we have to take to obtain a code). The following fact is a consequence of the existence of perfect codes.

\begin{prop}\label{Hamming} $\lim_{n\to\infty} a_n = 0$.
\end{prop}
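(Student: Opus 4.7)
The plan is to reduce to the special case $n = 2^m-1$ covered by Proposition \ref{perfect_code_exists} via a simple product construction, and then note that every integer is close (in logarithmic scale) to such an $n$.

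First, for $n$ of the form $n = 2^m-1$, Proposition \ref{perfect_code_exists} yields a perfect code $C\subseteq\{0,1\}^n$ whose closed balls of radius $1$ partition $\{0,1\}^n$; since each such ball has exactly $n+1$ elements, $|C| = 2^n/(n+1) = 2^{n-m}$, and in particular $c_n\leq 2^{n-m}$, so $a_n\leq 1/(n+1)$.

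Next, I would show monotonicity of $a_n$ under the product construction: if $N\leq n$ and $C\subseteq\{0,1\}^N$ is a code, then $C\times\{0,1\}^{n-N}$ is a code in $\{0,1\}^n$ (given $(x,y)\in\{0,1\}^N\times\{0,1\}^{n-N}$, pick $c\in C$ with $\rho_N(c,x)\leq 1$; then $\rho_n((c,y),(x,y))\leq 1$). This gives $c_n\leq c_N\cdot 2^{n-N}$, hence
\[ a_n \;=\; \frac{c_n}{2^n} \;\leq\; \frac{c_N\cdot 2^{n-N}}{2^n} \;=\; \frac{c_N}{2^N} \;=\; a_N. \]

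Finally, given an arbitrary $n$, let $m$ be the largest integer with $N := 2^m-1\leq n$. Then $2^{m+1} - 1 > n$, so $m\to\infty$ as $n\to\infty$. Combining the two observations,
\[ a_n \;\leq\; a_N \;\leq\; \frac{1}{N+1} \;=\; \frac{1}{2^m}, \]
which tends to $0$, proving $\lim_{n\to\infty}a_n = 0$.

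The only non-trivial step is the existence of perfect Hamming codes, which has already been established in Proposition \ref{perfect_code_exists}; everything else is an elementary product and monotonicity argument.
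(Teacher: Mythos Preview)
Your proof is correct and essentially identical to the paper's: both establish that $(a_n)$ is non-increasing via the product construction $C\mapsto C\times\{0,1\}^{n-N}$ and then invoke the perfect codes at $n=2^m-1$ from Proposition~\ref{perfect_code_exists} to get $a_{2^m-1}\le 1/2^m$. The only cosmetic difference is that the paper phrases the monotonicity one step at a time ($a_{n+1}\le a_n$) rather than jumping directly from $N$ to $n$.
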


\begin{proof}
First, notice that $(a_n)$ is non-increasing. Indeed, if $A$ is a code in $\{0,1\}^n$, then the family $A' = \{x^\frown 0\colon x\in A\} \cup \{x^\frown 1\colon x\in A\}$ is a code in $\{0,1\}^{n+1}$ and $|A'| = 2|A|$.

To show that $(a_n)$ can be arbitrarily close to $0$, fix $m\in \omega$ and let $n = 2^m -1$. By Proposition \ref{perfect_code_exists}, there is a perfect code $C$ in $\{0,1\}^n$, and from the proof we know that $|C| = 2^{n-m} = 2^n/(n+1)$ and so $c_n \leq 2^n/(n+1)$ and hence $a_n \leq 1/(n+1)$. As $(a_n)$ is non-increasing, $\lim_{n\to \infty} a_n = 0$.
\end{proof}

%

\begin{proof}[\textbf{Proof of Proposition \ref{strange}}] We are going to construct inductively an increasing sequence $(d_n)$ of natural numbers and a
	sequence $(C_n)$ of sets such that for every $n\in \omega$ the following conditions hold:
\begin{enumerate}
	\item $C_n \subseteq \{0,1\}^{d_n}$,
	\item $|C_n|<2^{d_n - 1}$,
	\item for every $c\in C_n$ and $t\in\{0,1\}^{d_{n+1}-d_n}$ we have $c^\frown t\in C_{n+1}$,
	\item For each $s \in \{0,1\}^{d_{n+1}}\setminus C_{n+1}$ there is $d_n\le m<d_{n+1}$ such that $flip_m(s)\in C_{n+1}$. 
\end{enumerate}

Let $d_0 = 3$ and $C_0$ be a perfect code in $\{0,1\}^3$ (e.g. $C_0$ may consist of two constant sequences). Suppose now that we have constructed $d_n$ and $C_n$. Let
\[ \alpha = \frac{2^{d_n-1}-|C_n|}{2^{d_n}-|C_n|}. \]
By (2) $\alpha>0$ and so, using Proposition \ref{Hamming}, we may find $k$ so that $a_{k} < \alpha$.
Let $C$ be a code in $\{0,1\}^{k}$ such that $|C| = a_k2^k$. Define $C_{n+1}$ in the following way:
\[ C_{n+1} = \big\{t^\frown x\colon t\in C_n, x\in \{0,1\}^k\big\} \cup \big\{t^\frown c\colon t\in \{0,1\}^{d_n}\setminus C_n,c\in C\big\}. \]
Thus, $C_{n+1}$ consists of two parts: all the possible extensions of elements of $C_n$ to $\{0,1\}^{d_n+k}$ and all the extensions of the rest of $\{0,1\}^{d_n}$ by elements of $C$. Put $d_{n+1} = d_n + k$. We claim that $C_{n+1}$ and $d_{n+1}$ satisfy all the desired properties. Indeed, (1) and (3) are clear, and concerning (2) we have:
\[ |C_{n+1}| = 2^k|C_n| + (2^{d_n}-|C_n|)\cdot a_k2^k, \] 
so
\begin{multline*}
	\frac{|C_{n+1}|}{2^{d_{n+1}}} = \frac{|C_{n+1}|}{2^{d_n+k}} = \frac{|C_n|}{2^{d_n}} + \frac{(2^{d_n}-|C_n|)\cdot a_k}{2^{d_n}} < \\ < \frac{|C_n|}{2^{d_n}} + \frac{(2^{d_n}-|C_n|)}{2^{d_n}}\cdot \frac{(2^{d_n-1}-|C_n|)}{(2^{d_n}-|C_n|)} = \frac{2^{d_n-1}}{2^{d_n}} = \frac{1}{2}, 
\end{multline*}
hence $|C_{n+1}|<2^{d_{n+1}-1}$.

Finally, to check (4) let $s\in \{0,1\}^{d_{n+1}}\setminus C_{n+1}$ and $t = s\restriction d_n$. If $t\in C_n$, then $flip_m(s)\in C_{n+1}$ for every $d_n\le m<d_{n+1}$. If $t\not\in C_n$, then, as $C$ is a code, there is $c\in C$ such that $\rho_{d_{n+1}}(t^\frown c,s)\leq 1$, which means that there is $d_n\le m<d_{n+1}$ 
 such that $flip_m(s) = t^\frown c$ and so $flip_m(s) \in C_{n+1}$.


\medskip

Let $B_n = 2^\omega \setminus \bigcup_{c\in C_n} [c]$. By (3), 
 the sequence $(B_n)$ is $\subseteq$-decreasing. Let $B = \bigcap_n B_n$. Because
of (2) we have that $\lambda(B_n)>1/2$ for each $n$ and so $\lambda(B)\geq 1/2$. 

Let $x\in 2^\omega$. If $x\notin B$, then $c = x\restriction d_n \in C_n$ for some $n$. Then $flip_m(x) \in [c]$ for every $m>d_n$ and so $flip_m(x)\notin B$ for every $m>d_n$. Now, assume that $x\in B$, so $x\restriction d_n\not\in C_n$ for any $n\in\omega$. Fix $N\in\omega$. Find $n$ such that $d_n > N$ and let $s = x\restriction
	d_{n+1}$, so that $s\in\{0,1\}^{d_n+1}\setminus C_{n+1}$.  By (4), there is $m\geq d_n$ such that $flip_m(s)\in C_{n+1}$. This means that $flip_m(x) \notin B_{n+1}$ and so $flip_m(x)\notin B$. Since $N$ was arbitrary, we are done.
\end{proof}

\begin{cor} $(\varphi_n)$ does not converge pointwise algebraically.
\end{cor}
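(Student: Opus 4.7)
The plan is to apply Proposition \ref{strange} with $A$ taken to be (the equivalence class of) the closed set $B\subseteq 2^\omega$ produced there, and to argue that $\bigvee_n\bigwedge_{m>n}\varphi_m(B)$ cannot equal $[B]$ in $\mathbb{M}$. First observe that the only candidate for a pointwise algebraic limit of $(\varphi_n)$ is the identity $\mathrm{id}_{\mathbb{M}}$: by Proposition \ref{algebraic_metric}, pointwise algebraic convergence implies pointwise metric convergence, which in a metric topology has a unique limit; we have already seen that $(\varphi_n)$ converges pointwise metric to $\mathrm{id}_{\mathbb{M}}$. So it is enough to show that $\bigvee_n\bigwedge_{m>n}\varphi_m(B)\ne [B]$.

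Since each $flip_m$ is an involution, $\varphi_m(B)$ is represented by $flip_m[B]=flip_m^{-1}[B]$, and for any $x\in 2^\omega$, $x\in flip_m[B]$ if and only if $flip_m(x)\in B$. By Proposition \ref{strange}, for every $x\in B$ and every $n\in\omega$ there exists $m>n$ with $flip_m(x)\notin B$, i.e.\ $x\notin flip_m[B]$. Hence
\[
B\cap\bigcap_{m>n}flip_m[B]=\emptyset\qquad\text{for every }n\in\omega.
\]
Because $\mathbb{M}$ is the Borel algebra modulo null sets and the sets $flip_m[B]$ are Borel, the countable meet in $\mathbb{M}$ is represented by the intersection, so $[B]\wedge\bigwedge_{m>n}\varphi_m(B)=0$ for every $n$, and therefore
\[
[B]\wedge\bigvee_{n\in\omega}\bigwedge_{m>n}\varphi_m(B)=0.
\]

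Since $\lambda(B)>1/2$, we have $[B]\ne 0$, so $\bigvee_n\bigwedge_{m>n}\varphi_m(B)\ne[B]=\mathrm{id}_{\mathbb{M}}(B)$. This contradicts pointwise algebraic convergence of $(\varphi_n)$ to $\mathrm{id}_{\mathbb{M}}$, which by the first paragraph rules out any algebraic limit at all. The only step requiring genuine work is the construction in Proposition \ref{strange}, already established via the Hamming code argument; the present reduction is purely formal once that set $B$ is in hand.
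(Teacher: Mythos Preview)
Your proof is correct and follows essentially the same approach as the paper: use the set $B$ from Proposition \ref{strange} and show that $\bigvee_n\bigwedge_{m>n}\varphi_m(B)\ne[B]$. Two small differences are worth noting: the paper proves the stronger fact $\bigcap_{m>n}flip_m[B]=\emptyset$ (which tacitly uses the case $x\notin B$ established inside the proof of Proposition \ref{strange}), whereas your weaker claim $B\cap\bigcap_{m>n}flip_m[B]=\emptyset$ needs only the property stated for $x\in B$; and you make explicit, via Proposition \ref{algebraic_metric} and uniqueness of metric limits, why excluding the identity as a limit excludes every possible algebraic limit---a point the paper leaves implicit.
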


\begin{proof}
	Let $B$ be a set given by Proposition \ref{strange}. For each $m$ we have
	\[ \bigcap_{n>m} flip_n[B] = \emptyset. \]
	Indeed, if $x\in \bigcap_{n>m} flip_n[B]$, then for every $n>m$ there is $y_n\in B$ such that $x = flip_n(y_n)$. But this means that $flip_n(x)=y_n\in B$ for each $n>m$, which is a contradiction with the property of $B$ promised by Proposition \ref{strange}.

	It follows that $\bigvee_m \bigwedge_{n>m} \varphi_n(B) = 0$, but $B\ne 0$ and so $(\varphi_n)$ does not converge pointwise algebraically to the identity $\psi$.
\end{proof}

\begin{rem}\label{>1/2} Notice that in fact in Proposition \ref{strange} we can ask for $B$ of arbitrarily large measure (smaller than $1$)---it is enough to adjust the definition of $\alpha$ in the proof.
\end{rem}

\begin{rem} Using the Vitali equivalence relation, it is easy to construct a \emph{non-measurable} set $A$ such that $\lambda^*(A)=1$ with even stronger property: for each $x \in A$ and every $n\in \omega$ we have $flip_n(x)\notin A$. 
	Using this set is probably the easiest method of showing the existence of a infinite game with perfect information which is not determined (see e.g. \cite{Kopczynski}).
It seems however that to obtain a Borel set, we have to be much more careful and we do not know any way to do this without using Hamming codes.
\end{rem}

\subsection{A sequence which is pointwise Borel metric convergent and pointwise algebraically convergent but not uniformly convergent}\label{ex_borel_alg_not_unif}

The above example may be adapted to obtain a sequence which is pointwise Borel metric convergent and pointwise algebraically convergent but not uniformly convergent.

Let $(\varphi_n)$ be the sequence defined in Section \ref{ex_borel_not_unif_alg}. Define $\psi_n\colon \mathbb{C} \to \mathbb{M}$ by $\psi_n = \varphi_n \restriction \mathbb{C}$. Here, we treat $\mathbb{C}$ as a subalgebra of $\mathbb{M}$, identifying clopen
subsets of $2^\omega$ with its equivalence relations (i.e. the elements of $\mathbb{M}$). By the same argument which showed that $(\varphi_n)$ was pointwise metric convergent to the identity on $\mathbb{M}$, $(\psi_n)$ is pointwise \textit{Borel} metric convergent to the identity on $\mathbb{C}$. Since the sets $X_n$'s from Section \ref{ex_borel_not_unif_alg} are clopen in $2^\omega$, $(\psi_n)$ is not uniformly convergent. It is however pointwise algebraically convergent, since $flip_n[C]$ is eventually constant for $C$ being a clopen subset of $2^\omega$.

\medskip

In Section \ref{secEfimov} we prove however that if a Boolean algebra $\mathbb{A}$ is somewhat more complicated than $\mathbb{C}$, e.g. $\mathbb{A}$ is $\sigma$-complete, then there is no similar example in $\mathcal{H}(\mathbb{A},\mathbb{M}_\kappa)$.

\section{Homomorphisms and forcing names for ultrafilters}\label{sec-homo-names}


Fix, once and for all, an infinite cardinal $\kappa$ and suppose that we force with the notion $\mathbb{M}_\kappa$ over a model $V$ of set theory. If $\mathbb{A}$ is a Boolean algebra in $V$ and $G$ is an $\mathbb{M}_\kappa$-generic filter over $V$, then we can
consider $\mathbb{A}$ as an element of $V[G]$, however, there might be new ultrafilters on $\mathbb{A}$ in $V[G]$. It appears nevertheless that to each $\mathbb{M}_\kappa$-name $\dot{\mathcal{U}}$ for an ultrafilter, i.e. to such $\dot{\mathcal{U}}$ that $\Vdash_{\mathbb{M}_\kappa}``\dot{\mathcal{U}}$ is an ultrafilter on $\mathbb{A}$'', we can assign a homomorphism from $\mathbb{A}$ to $\mathbb{M}_\kappa$ in $V$ in a quite natural way. Namely, if $\dot{\mathcal{U}}$ is such an $\mathbb{M}_\kappa$-name, then the function $\varphi_{\dot{\mathcal{U}}}\colon \mathbb{A} \to \mathbb{M}_\kappa$ defined fo every $A\in\mathbb{A}$ (recall that $\mathbb{A}\in V$) by the formula 
\[\varphi_{\dot{\mathcal{U}}}(A) = \llbracket A\in \dot{\mathcal{U}}\rrbracket\] 
is a Boolean homomorphism belonging to $V$. (We skip writting the symbol $\check{ }$ over canonical names for elements of $V$.)

Now fix a homomorphism $\varphi\colon \mathbb{A} \to \mathbb{M}_\kappa$ in $V$ and define an $\mathbb{M}_\kappa$-name $\tau_\varphi$ for a family of subsets $\mathbb{A}$ by
\[ \tau_\varphi=\big\{\langle A, \varphi(A)\rangle\colon A \in \mathbb{A}\big\}. \]
Then, $\Vdash_{\mathbb{M}_\kappa}$``$\tau_\varphi$ is an ultrafilter on $\mathbb{A}$''; in fact, we actually get that $\Vdash_{\mathbb{M}_\kappa}\tau_\varphi=\varphi^{-1}[\Gamma]$, where $\Gamma$ is the canonical $\mathbb{M}_\kappa$-name for an $\mathbb{M}_\kappa$-generic filter over $V$.

\begin{lem}\label{hom_uf_ass}
	For every $\mathbb{M}_\kappa$-name $\dot{\mathcal{U}}$ for an ultrafilter on a Boolean algebra $\mathbb{A}$ and for every homomorphism $\varphi\colon\mathbb{A}\to\mathbb{M}_\kappa$ it holds $\varphi_{(\tau_\varphi)}=\varphi$ and $\Vdash_{\mathbb{M}_\kappa}
	\tau_{(\varphi_{\dot{\mathcal{U}}})}=\dot{\mathcal{U}}$.
\end{lem}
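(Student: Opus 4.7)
The plan is to verify both equalities by unwinding the definitions of $\varphi_{\dot{\mathcal{U}}}$ and $\tau_\varphi$ and using the standard Boolean-value computation for atomic membership formulas in the forcing language. Since everything comes down to Boolean-valued truth, the proof should be essentially a calculation; there is no genuine difficulty, only some bookkeeping about which names are canonical.

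For the first equality, fix $A \in \mathbb{A}$. I would compute
\[
\varphi_{(\tau_\varphi)}(A) \;=\; \llbracket A \in \tau_\varphi \rrbracket \;=\; \bigvee_{B \in \mathbb{A}} \bigl(\varphi(B) \wedge \llbracket \check{A} = \check{B} \rrbracket\bigr),
\]
using that $\tau_\varphi = \{\langle \check{B}, \varphi(B)\rangle : B \in \mathbb{A}\}$. Because $\llbracket \check{A} = \check{B} \rrbracket = 1$ when $A = B$ and $0$ otherwise, the supremum collapses to the single term $\varphi(A)$, giving $\varphi_{(\tau_\varphi)} = \varphi$.

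For the second equality, let $G$ be an $\mathbb{M}_\kappa$-generic filter over $V$ and argue that the two names have the same interpretation in $V[G]$. Unpacking,
\[
\bigl(\tau_{(\varphi_{\dot{\mathcal{U}}})}\bigr)_G \;=\; \bigl\{A \in \mathbb{A} : \varphi_{\dot{\mathcal{U}}}(A) \in G\bigr\} \;=\; \bigl\{A \in \mathbb{A} : \llbracket A \in \dot{\mathcal{U}} \rrbracket \in G\bigr\}.
\]
By the forcing theorem, $\llbracket A \in \dot{\mathcal{U}} \rrbracket \in G$ if and only if $V[G] \models A \in \dot{\mathcal{U}}_G$, so the right-hand side is exactly $\dot{\mathcal{U}}_G$. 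Since $G$ was arbitrary, $\Vdash_{\mathbb{M}_\kappa} \tau_{(\varphi_{\dot{\mathcal{U}}})} = \dot{\mathcal{U}}$, as required.

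The only point worth being careful about is that in the definition of $\tau_\varphi$ one must read the first coordinates $A$ as their canonical names $\check{A}$; once this is fixed, both verifications reduce to the identity $\llbracket \check{A} \in \{\langle \check{B}, p_B\rangle : B \in \mathbb{A}\} \rrbracket = p_A$, which is the only fact used. I do not foresee any obstacle beyond making these conventions explicit.
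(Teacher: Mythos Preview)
Your proof is correct; the paper itself states this lemma without proof, treating both identities as immediate consequences of the definitions of $\varphi_{\dot{\mathcal{U}}}$ and $\tau_\varphi$ (together with the remark just before the lemma that $\Vdash_{\mathbb{M}_\kappa}\tau_\varphi=\varphi^{-1}[\Gamma]$). Your explicit unwinding via the Boolean-value computation $\llbracket \check A\in\tau_\varphi\rrbracket=\varphi(A)$ for the first identity and the truth lemma $\llbracket A\in\dot{\mathcal U}\rrbracket\in G\iff A\in\dot{\mathcal U}_G$ for the second is exactly the intended verification.
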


In this way we obtain a bijective correspondence between elements of $\mathcal{H}(\mathbb{A},\mathbb{M}_\kappa)$ and  ultrafilters on $\mathbb{A}$ in the forcing extension by $\mathbb{M}_\kappa$. One of the advantages of such an approach is that we may use properties of measures associated to homomorphisms (as in Section \ref{sec-convergence}) to study $\mathbb{M}_\kappa$-names for ultrafilters on \emph{old} Boolean algebras (in particular, to study ultrafilters
on the Cantor algebra, i.e. reals). 

\begin{exa} Suppose that $\varphi\colon \mathbb{A} \to \mathbb{M}_\kappa$ is such that the measure $\lambda_\kappa \circ \varphi$ is a point mass measure $\delta_\mathcal{V}$ for some $\mathcal{V}\in St(\mathbb{A})\cap V$. Then, $\Vdash_{\mathbb{M}_\kappa} \tau_\varphi = \mathcal{V}$. 
\end{exa}

A study of the interplay between the properties of $\mathbb{M}_\kappa$-names for ultrafilters and measures is a subject of forthcoming paper \cite{Cegielka}.

%

\textbf{In what follows} $(\varphi_n)$ and $\varphi$ will denote (a sequence of) elements of $\mathcal{H}(\mathbb{A},\mathbb{M}_\kappa)$ associated---as described above---to (a sequence of) $\mathbb{M}_\kappa$-names $(\dot{\mathcal{U}}_n)$ and $\dot{\mathcal{U}}$ for ultrafilters on a fixed Boolean algebra $\mathbb{A}\in V$, respectively. Note that by Lemma \ref{hom_uf_ass}, it has no meaning whether we associate homomorphisms in $\mathcal{H}(\mathbb{A},\mathbb{M}_\kappa)$ to $\mathbb{M}_\kappa$-names, or $\mathbb{M}_\kappa$-names to homomorphisms in $\mathcal{H}(\mathbb{A},\mathbb{M}_\kappa)$.

Recall here that for every sequence $(\phi_n)$ of formulae the following equalities hold:
\[\llbracket\exists^\infty n\colon\ \phi_n\rrbracket=\llbracket\forall n\exists m>n\ \phi_n\rrbracket=\bigwedge_n\bigvee_{m>n}\llbracket\phi_n\rrbracket\]
and
\[\llbracket\forall^\infty n\colon\ \phi_n\rrbracket=\llbracket\exists n\forall m>n\ \phi_n\rrbracket=\bigvee_n\bigwedge_{m>n}\llbracket\phi_n\rrbracket;\]
we will use them frequently.

\subsection{Pointwise algebraic convergence}\label{alg->forcing}

There is a strong connection between the pointwise algebraic convergence of sequences of homomorphisms from a Boolean algebra $\mathbb{A}$ into $\mathbb{M}_\kappa$ in the ground model $V$ and the convergence of the sequences of corresponding ultrafilters in the Stone space $St(\mathbb{A})$ in the $\mathbb{M}_\kappa$-generic extension $V[G]$.

\begin{prop}\label{convergence} The following conditions are equivalent:
	\begin{itemize}
		\item $\Vdash_{\mathbb{M}_\kappa}``(\dot{\mathcal{U}}_n)$ converges to  $\dot{\mathcal{U}}$'',
		\item $(\varphi_n)$ converges to $\varphi$ pointwise algebraically.
	\end{itemize}
\end{prop}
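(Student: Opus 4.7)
The plan is to reduce the equivalence, for each fixed $A\in\mathbb{A}$ separately, to a direct identity in the complete Boolean algebra $\mathbb{M}_\kappa$. Since $\mathbb{A}$ lies in the ground model, the universal quantifier over $A\in\mathbb{A}$ commutes with $\Vdash_{\mathbb{M}_\kappa}$, so I can fix $A$ throughout and then quantify over $A$ at the very end.

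First, I would unfold the meaning of convergence in $St(\mathbb{A})$. The basic clopen neighborhoods of an ultrafilter $\mathcal{U}$ are the sets $[A]$ with $A\in\mathcal{U}$, hence $(\mathcal{U}_n)\to\mathcal{U}$ is equivalent to the condition that for every $A\in\mathbb{A}$ eventually $A\in\mathcal{U}_n\leftrightarrow A\in\mathcal{U}$ (one implication by applying the definition to $A$, the other to $A^c$). Translated into Boolean values, $\Vdash_{\mathbb{M}_\kappa}$ ``$(\dot{\mathcal{U}}_n)\to\dot{\mathcal{U}}$'' becomes the requirement that for every $A\in\mathbb{A}$,
\[\bigvee_n\bigwedge_{m>n}\big(\varphi_m(A)\triangle\varphi(A)\big)^c=1,\]
equivalently, after taking complements,
\[\bigwedge_n\bigvee_{m>n}\big(\varphi_m(A)\triangle\varphi(A)\big)=0.\]

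Next, I would localize this identity by intersecting with $\varphi(A)$ and with $\varphi(A)^c$ separately. In every complete Boolean algebra a finite meet distributes over arbitrary meets and joins, i.e.\ $c\wedge\bigwedge_\alpha x_\alpha=\bigwedge_\alpha(c\wedge x_\alpha)$ and $c\wedge\bigvee_\alpha x_\alpha=\bigvee_\alpha(c\wedge x_\alpha)$. Using this together with $\varphi(A)\wedge(\varphi_m(A)\triangle\varphi(A))=\varphi(A)\setminus\varphi_m(A)$ and $\varphi(A)^c\wedge(\varphi_m(A)\triangle\varphi(A))=\varphi(A)^c\wedge\varphi_m(A)$, a straightforward calculation gives
\[\varphi(A)\wedge\bigwedge_n\bigvee_{m>n}\big(\varphi_m(A)\triangle\varphi(A)\big)=\varphi(A)\setminus L(A),\]
\[\varphi(A)^c\wedge\bigwedge_n\bigvee_{m>n}\big(\varphi_m(A)\triangle\varphi(A)\big)=U(A)\setminus\varphi(A),\]
where $L(A)=\bigvee_n\bigwedge_{m>n}\varphi_m(A)$ and $U(A)=\bigwedge_n\bigvee_{m>n}\varphi_m(A)$. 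Reassembling the two disjoint pieces yields
\[\bigwedge_n\bigvee_{m>n}\big(\varphi_m(A)\triangle\varphi(A)\big)=\big(\varphi(A)\setminus L(A)\big)\vee\big(U(A)\setminus\varphi(A)\big).\]

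Finally, the right-hand side is $0$ iff both $\varphi(A)\le L(A)$ and $U(A)\le\varphi(A)$; since $L(A)\le U(A)$ always holds, this collapses to $L(A)=U(A)=\varphi(A)$, which is precisely the pointwise algebraic convergence of $(\varphi_n)$ to $\varphi$ at $A$. Quantifying over $A\in\mathbb{A}$ finishes the equivalence. The main point of care will be the localization step: one has to avoid invoking any infinite distributive law stronger than what holds in a general complete Boolean algebra, and the bookkeeping of de Morgan applied to $\bigwedge_{m>n}\varphi_m(A)^c$ versus $\bigvee_{m>n}\varphi_m(A)$ needs to be done carefully, but no step beyond standard finite-over-infinite distributivity and complementation is required.
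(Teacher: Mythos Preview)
Your proof is correct and follows essentially the same Boolean-value computation as the paper's proof: both reduce the forcing statement to the identity $\bigwedge_n\bigvee_{m>n}\big(\varphi_m(A)\triangle\varphi(A)\big)=0$ and then relate this to $L(A)=U(A)=\varphi(A)$ via distributivity and de~Morgan. Your presentation is slightly cleaner in that you establish the single identity $\bigwedge_n\bigvee_{m>n}\big(\varphi_m(A)\triangle\varphi(A)\big)=(\varphi(A)\setminus L(A))\vee(U(A)\setminus\varphi(A))$ directly, whereas the paper argues both implications by contrapositive and splits into cases, but the underlying mathematics is identical.
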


\begin{proof} 
	Suppose that it is not true that  $\Vdash_{\mathbb{M}_\kappa}``(\dot{\mathcal{U}}_n)$ converges to  $\dot{\mathcal{U}}$''. It means that there is a non-zero $p\in \mathbb{M}_\kappa$ which forces the opposite, i.e. that there is $A\in \mathbb{A}$ such that $p \Vdash
	\exists^\infty n\  A\in \dot{\mathcal{U}}\setminus \dot{\mathcal{U}}_n$. It follows that
\[p \leq \bigwedge_m \bigvee_{n>m} \llbracket A\in \dot{\mathcal{U}}\setminus \dot{\mathcal{U}}_n \rrbracket,\] 
and hence
\[\bigwedge_m \bigvee_{n>m}\big(\varphi(A) \setminus	\varphi_n(A)\big) \ne 0.\] 
Thus, $(\varphi_n(A))$ does not converge to $\varphi(A)$ and hence $(\varphi_n)$ does not converge pointwise algebraically to $\varphi$.

If $(\varphi_n(A))$ does not converge algebraically to $\varphi(A)$ for some $A\in \mathbb{A}$, then 
\[p=\varphi(A) \setminus\big(\bigwedge_m \bigvee_{n>m}\varphi_n(A)\big)\ne 0\quad\text{or}\quad q=\big(\bigwedge_m \bigvee_{n>m}\varphi_n(A)\big)\setminus \varphi(A) \ne 0.\]

Assume first that the former case holds, i.e. $p\neq0$. We have:
\[p=\varphi(A)\wedge\big(\bigvee_m \bigwedge_{n>m}\varphi_n(A^c)\big),\]
so $p\Vdash A\in\dot{\mathcal{U}}$ and $p\Vdash\exists^\infty n\ A^c\in\dot{\mathcal{U}}_n$. It follows that $p\Vdash``(\dot{\mathcal{U}}_n)$ does not converge to $\dot{\mathcal{U}}$''. 

We proceed similarly in the latter case, i.e. when $q\neq\emptyset$.

\end{proof}

\subsection{Uniform convergence}

In this section we will show that the notion of uniform convergence of homomorphisms is connected to trivial convergence of ultrafilters in the random extension.



\begin{prop} \label{uniform-convergence}
	If $\Vdash_{\mathbb{M}_\kappa} \forall^\infty n\in\omega\ \dot{\mathcal{U}}_n = \dot{\mathcal{U}}$, then $(\varphi_n)$ converges uniformly to $\varphi$. 
\end{prop}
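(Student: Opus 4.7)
The plan is to rewrite the assumption about $\mathbb{M}_\kappa$-names as a Boolean-algebraic statement about a single ``disagreement'' element of $\mathbb{M}_\kappa$ for each $n$, and then to extract uniform convergence by applying the continuity of the measure $\lambda_\kappa$ along a decreasing sequence.

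First I would translate the equality of the two names into values in $\mathbb{M}_\kappa$. Since each $\dot{\mathcal{U}}_n$ and $\dot{\mathcal{U}}$ is forced to be a subset of the ground-model algebra $\mathbb{A}$, and since $\llbracket A\in\dot{\mathcal{U}}_n\rrbracket=\varphi_n(A)$ and $\llbracket A\in\dot{\mathcal{U}}\rrbracket=\varphi(A)$, we have
\[\llbracket \dot{\mathcal{U}}_n = \dot{\mathcal{U}}\rrbracket = \bigwedge_{A\in\mathbb{A}}\big(1-(\varphi_n(A)\triangle\varphi(A))\big) = 1 - D_n,\]
where $D_n:=\bigvee_{A\in\mathbb{A}}\big(\varphi_n(A)\triangle\varphi(A)\big)\in\mathbb{M}_\kappa$ (the supremum exists since $\mathbb{M}_\kappa$ is a complete Boolean algebra).

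Next I would rephrase the hypothesis. From $\Vdash_{\mathbb{M}_\kappa}\forall^\infty n\ \dot{\mathcal{U}}_n=\dot{\mathcal{U}}$ we get
\[1 = \bigvee_{N}\bigwedge_{n>N}(1-D_n) = 1 - \bigwedge_{N}\bigvee_{n>N}D_n,\]
so, letting $F_N:=\bigvee_{n>N}D_n$, the sequence $(F_N)_N$ is $\subseteq$-decreasing in $\mathbb{M}_\kappa$ with $\bigwedge_N F_N=0$. Because $\mathbb{M}_\kappa$ is ccc and $\lambda_\kappa$ is continuous (from above) on decreasing sequences, this yields $\lim_{N\to\infty}\lambda_\kappa(F_N)=0$, and in particular $\lim_{n\to\infty}\lambda_\kappa(D_n)=0$ since $D_n\le F_{n-1}$ for $n>0$.

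Finally, since $\varphi_n(A)\triangle\varphi(A)\le D_n$ for every $A\in\mathbb{A}$, we obtain
\[d_{hom}(\varphi_n,\varphi) = \sup_{A\in\mathbb{A}}\lambda_\kappa\big(\varphi_n(A)\triangle\varphi(A)\big) \le \lambda_\kappa(D_n) \xrightarrow[n\to\infty]{} 0,\]
which is exactly uniform convergence of $(\varphi_n)$ to $\varphi$. The main technical point to be careful about is that the supremum defining $D_n$ is taken over the possibly uncountable family $\mathbb{A}$; this is precisely where ccc of $\mathbb{M}_\kappa$ (guaranteeing that $F_N$ is realized by a countable subfamily) together with $\sigma$-additivity of $\lambda_\kappa$ make the continuity argument legitimate.
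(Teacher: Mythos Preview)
Your proof is correct and takes a somewhat different route from the paper's. The paper argues by contradiction: assuming $(\varphi_n)$ does not converge uniformly to $\varphi$, it chooses for each $n$ a witness $A_n\in\mathbb{A}$ with $\lambda_\kappa(\varphi(A_n)\setminus\varphi_n(A_n))>\varepsilon$, sets $p_n=\varphi(A_n)\setminus\varphi_n(A_n)$ and $p=\bigwedge_m\bigvee_{n>m}p_n$, notes $\lambda_\kappa(p)\ge\varepsilon$, and then passes to a generic filter $G\ni p$ to find a condition $r\le p$ forcing $\dot{\mathcal{U}}_n=\dot{\mathcal{U}}$ for all large $n$, which contradicts $r\le p$. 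Your argument stays entirely inside the complete Boolean algebra: you identify the exact disagreement element $D_n=\llbracket\dot{\mathcal{U}}_n\neq\dot{\mathcal{U}}\rrbracket$, translate the hypothesis into $\bigwedge_N\bigvee_{n>N}D_n=0$, apply continuity of $\lambda_\kappa$ along the countable decreasing sequence $(F_N)$, and bound $d_{hom}(\varphi_n,\varphi)\le\lambda_\kappa(D_n)$. This is more direct---it avoids any talk of generic filters and gives an explicit quantitative bound---while the paper's version makes the forcing content visible. One minor remark: the ccc of $\mathbb{M}_\kappa$ is not actually needed in your continuity step, since $(F_N)_{N\in\omega}$ is already countable and $\sigma$-additivity of $\lambda_\kappa$ suffices; completeness of $\mathbb{M}_\kappa$ alone ensures the possibly uncountable supremum $D_n$ exists.
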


\begin{proof}
 Assume that $(\varphi_n)$ does not converge uniformly to $\varphi$. Then, without loss of generality, there is $\varepsilon>0$ such that for each $n\in \omega$ there is $A_n\in\mathbb{A}$ such that  $\lambda\big(\varphi(A_n)
	\setminus \varphi_n(A_n)\big)>\varepsilon$. For every $n\in\omega$ put $p_n=\varphi(A_n)\setminus \varphi_n(A_n)$. Since $\lambda\big(\bigvee_{n>m}p_n\big)>\varepsilon$ for every $m\in\omega$, for $p = \bigwedge_{m} \bigvee_{n>m} p_n$ we have $\lambda(p)\ge\varepsilon$ and thus $p\neq 0$. 

	Let $G$ be an $\mathbb{M}_\kappa$-generic filter over $V$ containing $p$. We work in $V$. There is  $q\in G$ below $p$ such that $q\Vdash_{\mathbb{M}_\kappa} \forall^\infty n\in\omega\ \dot{\mathcal{U}}_n = \dot{\mathcal{U}}$, and hence there is $r\in G$ below $q$ and $m\in\omega$ such that for every $n>m$ we have $r\Vdash\dot{\mathcal{U}}_n=\dot{\mathcal{U}}$. It follows immediately that $p_n\perp r$ for every $n>m$ and thus $p\perp r$, a contradiction, since $r\leq p$.
\end{proof}

It appears that a fact which is in a sense converse to Proposition \ref{uniform-convergence} also holds and constitutes actually the main result of this section (Theorem \ref{uniform-convergence2}). Note here that in the proof of Proposition \ref{uniform-convergence}, as well as of Proposition \ref{convergence}, we actually did not use that the Boolean algebra $\mathbb{M}_\kappa$ carries a strictly positive measure and thus those results remain in fact true for any $\sigma$-complete Boolean algebra $\mathbb{B}$ in place of $\mathbb{M}_\kappa$ (and hence, of course, for any reasonable notion of forcing).

\begin{thm}\label{uniform-convergence2} If $(\varphi_n)$ converges uniformly to $\varphi$, then for almost all $n$ there is $p_n \in \mathbb{M}_\kappa$ such that $p_n \Vdash \dot{\mathcal{U}}_n = \dot{\mathcal{U}}$.
\end{thm}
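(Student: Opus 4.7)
The plan is to prove the contrapositive. Suppose that for infinitely many $n \in \omega$ there is no condition $p_n \in \mathbb{M}_\kappa$ with $p_n \Vdash \dot{\mathcal{U}}_n = \dot{\mathcal{U}}$; I will show that $(\varphi_n)$ cannot converge uniformly to $\varphi$. The first observation is that since $\mathbb{M}_\kappa$ is a complete Boolean algebra, the Boolean value $\llbracket \dot{\mathcal{U}}_n = \dot{\mathcal{U}} \rrbracket$ coincides with the supremum of all conditions forcing $\dot{\mathcal{U}}_n = \dot{\mathcal{U}}$. Hence "no $p_n$ forces the equality" means exactly that this Boolean value is $0$, and consequently $\Vdash_{\mathbb{M}_\kappa} \dot{\mathcal{U}}_n \neq \dot{\mathcal{U}}$ for each such $n$.

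Now I invoke Theorem \ref{distinct}, which is precisely tailored to this situation: for each such $n$ we obtain a condition $p_n \in \mathbb{M}_\kappa$ with $\lambda_\kappa(p_n) > 1/8$ (using the theorem with, say, tolerance $1/8$) and an element $A_n \in \mathbb{A}$ such that $p_n \Vdash A_n \in \dot{\mathcal{U}}_n \triangle \dot{\mathcal{U}}$. Translating back to the language of homomorphisms, the correspondence between $\dot{\mathcal{U}}_n,\dot{\mathcal{U}}$ and $\varphi_n,\varphi$ gives
\[
\llbracket A_n \in \dot{\mathcal{U}}_n \triangle \dot{\mathcal{U}} \rrbracket
 = \varphi_n(A_n) \triangle \varphi(A_n),
\]
so $p_n \leq \varphi_n(A_n) \triangle \varphi(A_n)$ in $\mathbb{M}_\kappa$, and therefore
\[
d_\mu\bigl(\varphi_n(A_n), \varphi(A_n)\bigr)
 = \lambda_\kappa\bigl(\varphi_n(A_n) \triangle \varphi(A_n)\bigr)
 \geq \lambda_\kappa(p_n) > 1/8.
\]

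Since this holds for infinitely many $n$, we have $d_{hom}(\varphi_n,\varphi) > 1/8$ for infinitely many $n$, which directly contradicts the assumption that $(\varphi_n)$ converges uniformly to $\varphi$. This establishes the contrapositive and completes the proof.

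The only non-routine step is the appeal to Theorem \ref{distinct}, which supplies a condition of non-negligible (indeed, nearly maximal) measure witnessing the inequality of two ultrafilter names; without such control over the measure of $p_n$, the naive ccc argument would yield only a countable antichain of witnesses, giving no contradiction with uniform convergence. So the real content of the present theorem is packaged inside Theorem \ref{distinct}; the argument here is essentially a clean contrapositive translation between the forcing-theoretic statement and the metric statement about $\mathcal{H}(\mathbb{A},\mathbb{M}_\kappa)$.
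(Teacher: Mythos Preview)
Your argument is correct and is essentially the same as the paper's: both proofs rest entirely on Theorem \ref{distinct} and simply translate between the forcing statement $\Vdash \dot{\mathcal{U}}_n \ne \dot{\mathcal{U}}$ and a uniform lower bound on $\lambda_\kappa\bigl(\varphi_n(A_n)\triangle\varphi(A_n)\bigr)$. The only cosmetic difference is that you frame the whole argument as a contrapositive, whereas the paper argues directly (choosing $m$ so that $d_{hom}(\varphi_n,\varphi)<1/5$ for $n>m$ and then applying the contrapositive of Theorem \ref{distinct} to each such $n$); the numerical thresholds $1/8$ versus $1/5$ are immaterial.
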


We will prove this theorem in a series of lemmas and propositions.  One of them, Theorem \ref{distinct}, is interesting on its own sake.

\begin{thm}\label{distinct}
	Let $\mathbb{A}$ be an atomless Boolean algebra in $V$. If $\dot{\mathcal{U}}$ and $\dot{\mathcal{V}}$ are $\mathbb{M}_\kappa$-names for ultrafilters on $\mathbb{A}$ such that $\Vdash_{\mathbb{M}_\kappa} \dot{\mathcal{U}} \ne\dot{\mathcal{V}}$, then for every
	$\varepsilon>0$ there is $p\in \mathbb{M}_\kappa$ and $C\in \mathbb{A}$ such that $\lambda(p)>1/4 - \varepsilon$ and $p\Vdash C\in \dot{\mathcal{U}} \triangle \dot{\mathcal{V}}$. 
\end{thm}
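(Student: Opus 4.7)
The plan is to translate the forcing hypothesis into a statement about Boolean values in $\mathbb{M}_\kappa$, use the countable chain condition to reduce it to a finite combinatorial problem about two partitions of unity, and then invoke the combinatorial Proposition \ref{malowanie-plotu-new}. Writing $\varphi_{\dot{\mathcal{U}}}, \varphi_{\dot{\mathcal{V}}}$ for the homomorphisms from $\mathbb{A}$ to $\mathbb{M}_\kappa$ associated via Lemma \ref{hom_uf_ass} to $\dot{\mathcal{U}}, \dot{\mathcal{V}}$, and setting $p_C := \varphi_{\dot{\mathcal{U}}}(C) \triangle \varphi_{\dot{\mathcal{V}}}(C) = \llbracket C \in \dot{\mathcal{U}} \triangle \dot{\mathcal{V}} \rrbracket$ for each $C \in \mathbb{A}$, the hypothesis $\Vdash \dot{\mathcal{U}} \neq \dot{\mathcal{V}}$ is equivalent to $\bigvee_{C \in \mathbb{A}} p_C = 1_{\mathbb{M}_\kappa}$. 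It therefore suffices to produce, for every $\varepsilon > 0$, an element $C \in \mathbb{A}$ with $\lambda_\kappa(p_C) > 1/4 - \varepsilon$, because then $p := p_C$ does the job.

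Next, by the countable chain condition I would pick a sequence $(C_n)_{n<\omega}$ in $\mathbb{A}$ with $\bigvee_n p_{C_n} = 1$ and, for a small auxiliary $\delta > 0$ (to be fixed in terms of $\varepsilon$), choose $N$ so large that $\lambda_\kappa\big(\bigvee_{n<N} p_{C_n}\big) > 1 - \delta$. The key move is then to pass to the finite subalgebra $\mathcal{B}_N \subseteq \mathbb{A}$ generated by $C_0, \ldots, C_{N-1}$, with atoms $a_s := \bigwedge_{i<N} C_i^{s(i)}$ indexed by $s \in 2^N$ (using $X^0 = X$ and $X^1 = X^c$). Applying $\varphi_{\dot{\mathcal{U}}}$ and $\varphi_{\dot{\mathcal{V}}}$ yields two partitions of unity in $\mathbb{M}_\kappa$ whose common refinement is the partition $\{W_{s,t} := \varphi_{\dot{\mathcal{U}}}(a_s) \wedge \varphi_{\dot{\mathcal{V}}}(a_t)\}_{s,t \in 2^N}$. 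A direct computation then shows that for every $T \subseteq 2^N$, the element $C_T := \bigvee_{s \in T} a_s \in \mathbb{A}$ satisfies
\[
  p_{C_T} \;=\; \bigvee_{\substack{s,t \in 2^N \\ \mathbbm{1}_T(s) \neq \mathbbm{1}_T(t)}} W_{s,t},
\]
while the total off-diagonal mass $\sum_{s \neq t} \lambda_\kappa(W_{s,t})$ coincides with $\lambda_\kappa\big(\bigvee_{n<N} p_{C_n}\big) > 1 - \delta$.

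The entire problem is thereby reduced to finding $T \subseteq 2^N$ for which the ``separated'' $W$-mass exceeds $1/4 - \varepsilon$, which is exactly the content of Proposition \ref{malowanie-plotu-new}. Its plausibility is visible through an averaging argument: for $T$ chosen uniformly at random, each pair $(s,t)$ with $s \neq t$ satisfies $s \in T$ and $t \notin T$ with probability $1/4$, so the expected mass of the one-sided difference $\varphi_{\dot{\mathcal{U}}}(C_T) \setminus \varphi_{\dot{\mathcal{V}}}(C_T)$ equals $(1-\delta)/4$, and hence some $T$ attains at least this value. Proposition \ref{malowanie-plotu-new} supplies the corresponding deterministic ``fence-painting'' choice of $T$, and for $\delta$ small enough relative to $\varepsilon$ yields $\lambda_\kappa(p_{C_T}) > 1/4 - \varepsilon$; setting $p := p_{C_T}$ and $C := C_T$ then finishes the argument. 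The main obstacle is Proposition \ref{malowanie-plotu-new} itself---the rest of the proof is essentially routine bookkeeping with ccc and Boolean-valued semantics---and the atomlessness of $\mathbb{A}$ should enter, if at all, only at that combinatorial step.
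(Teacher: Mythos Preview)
Your argument is correct and shares the paper's overall strategy---use the ccc to reduce to countably many witnesses and then invoke the combinatorial Proposition \ref{malowanie-plotu-new}---but your organization is more direct. The paper first passes to the \emph{countable} subalgebra $\mathbb{B}$ generated by the witnesses and then applies an intermediate Proposition \ref{14}, whose proof goes through Stone duality (the dual maps $f_\varphi,f_\psi\colon St(\mathbb{M}_\kappa)\to St(\mathbb{B})$, normality of $\lambda_\kappa$ to pass to clopen sets, etc.) and a further approximation Lemma \ref{1/4} before finally reaching Proposition \ref{malowanie-plotu-new}. You bypass that topological detour entirely by truncating to a \emph{finite} subalgebra $\mathcal{B}_N$ at the cost of $\delta$, writing down the partition $\{W_{s,t}\}$ explicitly, and applying the combinatorial step immediately. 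Your route is more elementary; the paper's route has the minor advantage of isolating Proposition \ref{14} as a reusable statement about homomorphisms into $\mathbb{M}_\kappa$. One small imprecision worth noting: Proposition \ref{malowanie-plotu-new} as stated produces a measurable set $L$ with $f[L]\cap g[L]=\emptyset$, not directly a subset $T$ of the range; but taking $T=f[L]$ gives $L\subseteq\varphi_{\dot{\mathcal{U}}}(C_T)\setminus\varphi_{\dot{\mathcal{V}}}(C_T)$, which is what you need---and in fact your averaging argument over random $T$ already yields this form directly, without even appealing to the proposition. You are also right that atomlessness of $\mathbb{A}$ is not actually used in either proof.
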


Theorem \ref{distinct} requires a brief comment. Assume that $\mathbb{A}$ is a Boolean algebra and $\dot{\mathcal{U}}$ and $\dot{\mathcal{V}}$ are $\mathbb{M}_\kappa$-names for ultrafilters on $\mathbb{A}$ such that $\Vdash_{\mathbb{M}_\kappa} \dot{\mathcal{U}} \ne \dot{\mathcal{V}}$. Since $\mathbb{M}_\kappa$ is a ccc forcing, there is a maximal
antichain $(p_n)$ in $\mathbb{M}_\kappa$ and a sequence $(A_n)$ of elements of $\mathbb{A}$ such that $p_n \Vdash A_n \in \dot{\mathcal{U}}\setminus \dot{\mathcal{V}}$. The problem with this general statement is that \emph{a priori} we do not have
any control on the measures of $p_n$'s---it might as lief happen that all of them are very small in terms of the measure. Theorem \ref{distinct} shows however that we can always find one ``large'' condition in $\mathbb{M}_\kappa$ which distinguishes the ultrafilters.

The proof of Theorem \ref{distinct} requires some auxiliary lemmas. 

\begin{prop}\label{malowanie-plotu-new}
Assume that $(X, \Sigma, \mu)$ is a measure space with a non-negative (not necessarily probability) measure $\mu$, $K$ is a compact space, and $f,g\colon X \to K$ are simple measurable functions. If $f \ne g$ $\mu$-almost everywhere, then there exists $L\in\Sigma$ such that $\mu(L)\ge \mu(X)/4$ and $f[L] \cap g[L] =\emptyset$.
\end{prop}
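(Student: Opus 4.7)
The plan is a short probabilistic/averaging argument exploiting that $f$ and $g$ are simple. Let $V = f[X] \cup g[X]$, a finite subset of $K$, and for every pair $(u,v) \in V\times V$ put
\[ X_{u,v} \;=\; f^{-1}[\{u\}] \cap g^{-1}[\{v\}] \;\in\; \Sigma. \]
The sets $X_{u,v}$ form a finite measurable partition of $X$, and since $f \ne g$ holds $\mu$-almost everywhere we have $\mu(X_{u,u}) = 0$ for every $u \in V$; hence
\[ \sum_{(u,v)\in V\times V,\ u\ne v} \mu(X_{u,v}) \;=\; \mu(X). \]

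Next, independently assign every $v \in V$ to one of two classes $S$ or $T$ by a fair coin flip, and consider the (random) measurable set
\[ L \;=\; \bigcup_{u \in S,\ v \in T} X_{u,v}. \]
For any $u \ne v$ the events $\{u \in S\}$ and $\{v \in T\}$ are independent, each of probability $1/2$, so the expected value of $\mu(L)$ equals
\[ \sum_{u\ne v} \mu(X_{u,v}) \cdot \frac{1}{4} \;=\; \frac{\mu(X)}{4}. \]
Because there are only finitely many partitions of the finite set $V$, some specific partition $V = S \sqcup T$ realises $\mu(L) \ge \mu(X)/4$. For this $L$, every $x \in L$ lies in some $X_{u,v}$ with $u \in S$ and $v \in T$, so $f(x) \in S$ and $g(x) \in T$; thus $f[L] \subseteq S$ and $g[L] \subseteq T$, and these are disjoint.

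I expect no serious obstacle: the only nontrivial idea is to average over $2$-colourings of the finite value set $V$ rather than to try to construct $L$ by hand from the combinatorial structure of the $X_{u,v}$'s. The factor $1/4$ appears exactly as the probability that a ``directed arc'' $(u,v)$ with $u \ne v$ satisfies $u \in S$ and $v \in T$; the diagonal $u = v$ contributes nothing by the almost everywhere hypothesis, so the full mass $\mu(X)$ is counted with weight $1/4$.
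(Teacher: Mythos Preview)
Your proof is correct and follows essentially the same averaging-over-2-colourings idea as the paper's proof: the paper phrases the same computation as a counting equality $\sum_N \nu(N_0)=\sum_N \nu(N_1)$ over subsets $N$ of the value set, whereas you phrase it as an expectation over random partitions $V=S\sqcup T$. Your version is slightly cleaner in that you colour the whole union $V=f[X]\cup g[X]$ at once, avoiding the paper's separate bookkeeping for pieces landing outside $H=f[X]\cap g[X]$.
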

\begin{proof}
Let $\mathcal{P}$ be a finite partition of $X$ into measurable pieces such that for every $A\in\mathcal{P}$ the functions $f$ and $g$ are constant on $A$ and $\mu(A)>0$. Define the functions $f_{\mathcal{P}},g_{\mathcal{P}}\colon\mathcal{P}\to K$ as follows:
\[f_{\mathcal{P}}(A)=x\quad\text{if and only if}\quad f[A]=\{x\},\]
and similarly
\[g_{\mathcal{P}}(A)=x\quad\text{if and only if}\quad g[A]=\{x\}.\]
For every $A\in\mathcal{P}$ we have $f_{\mathcal{P}}(A)\neq g_{\mathcal{P}}(A)$ (since $\mu(A)>0$). Put $H=f[X]\cap g[X]$. Define an auxiliary measure $\nu$ on $\mathcal{P}(H\times H)$ as follows:
\[\nu\big(\{(x,y)\}\big)=\sum\big\{\mu(A)\colon\ f_{\mathcal{P}}(A)=x,\ g_{\mathcal{P}}(A)=y, A\in\mathcal{P}\big\}\]
for every $(x,y)\in H\times H$. It follows that $\nu\big(\{(x,x)\}\big)=0$ for every $x\in H$.

For every $N\subseteq H$ let
\[N_0=(N\times N)\cup(N^c\times N^c)\quad\text{and}\quad N_1=(N\times N^c)\cup(N^c\times N).\]
It follows that for every $(x,y)\in H\times H$ with $x\neq y$ we have:
\[\big|\{N\subseteq H\colon\ (x,y)\in N_0\}\big|=\big|\{N\subseteq H\colon\ (x,y)\in N_1\}\big|,\]
and so
\[\sum_{N\in\mathcal{P}(H)}\nu(N_0)=\sum_{N\in\mathcal{P}(H)}\nu(N_1).\]
There exists $N\subseteq H$ such that $\nu(N_1)\ge\nu(N_0)$. Since $N_0\cup N_1=H\times H$, $\nu(N_1)\ge\nu(H\times H)/2$ and hence either $\nu(N\times N^c)\ge\nu(H\times H)/4$ or $\nu(N^c\times N)\ge\nu(H\times H)/4$. Without loss of generality we may assume that $\nu(N\times N^c)\ge\nu(H\times H)/4$ as this assumption makes no difference for the following arguments.

For the set $L'\subseteq X$ defined as follows:
\[L'=\bigcup\big\{A\in\mathcal{P}\colon\ f_{\mathcal{P}}(A)\in N,\ g_{\mathcal{P}}(A)\in N^c\big\},\]
we have:
\[\mu(L')=\sum\big\{\mu(A)\colon\ f_{\mathcal{P}}(A)\in N,\ g_{\mathcal{P}}(A)\in N^c,\ A\in\mathcal{P}\big\}=\nu(N\times N^c)\ge\]
\[\ge\nu(H\times H)/4=\frac{1}{4}\sum\big\{\mu(A)\colon f_{\mathcal{P}}(A)\in H,\ g_{\mathcal{P}}(A)\in H,\ A\in\mathcal{P}\big\}.\]
Note that for every $x\in L'$ we have $f(x)\in N$ and $g(x)\in N^c$, so neither $f(x)\in g[L']$, nor $g(x)\in f[L']$. Finally, for the set $L\subseteq X$ given by the formula:
\[L=L'\cup\bigcup\big\{A\in\mathcal{P}\colon\ f_{\mathcal{P}}(A)\not\in H\text{ or }g_{\mathcal{P}}(A)\not\in H\big\},\]
we also have that $f[L]\cap g[L]=\emptyset$ as well as it holds:
\[\mu(L)=\mu(L')+\sum\big\{\mu(A)\colon\ f_{\mathcal{P}}(A)\not\in H\text{ or }g_{\mathcal{P}}(A)\not\in H\big\}\ge\]
\[\ge\frac{1}{4}\sum\big\{\mu(A)\colon f_{\mathcal{P}}(A)\in H,\ g_{\mathcal{P}}(A)\in H\big\}+\frac{1}{4}\sum\big\{\mu(A)\colon\ f_{\mathcal{P}}(A)\not\in H\text{ or }g_{\mathcal{P}}(A)\not\in H\big\}=\]
\[=\frac{1}{4}\sum\big\{\mu(A)\colon\ A\in\mathcal{P}\big\}=\mu(X)/4.\]
\end{proof}

\noindent Note that in the above proposition we may in fact assume that $K$ is finite.

\begin{lem}\label{1/4} Assume that $(X,\Sigma,\mu)$ is a probability space, $K$ is a closed subset of $2^\omega$, and $f,g\colon X \to K$ are measurable and $f\ne g$ $\mu$-almost everywhere. Then, for every $\varepsilon>0$ there is $L\in\Sigma$ such that $\mu(L)>1/4-\varepsilon$ and $f[L] \cap g[L] = \emptyset$.
\end{lem}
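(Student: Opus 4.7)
The plan is to reduce the statement to Proposition \ref{malowanie-plotu-new} by approximating the (possibly non-simple) measurable functions $f,g$ by simple measurable ones obtained from truncating elements of $2^\omega$ at a finite coordinate.

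First I would, for each $n\in\omega$, define $f_n,g_n\colon X\to 2^n$ by $f_n(x)=f(x)\restriction n$ and $g_n(x)=g(x)\restriction n$. These are simple measurable functions whose range lies in the finite (hence compact) set $2^n$. Consider the measurable sets
\[ A_n=\{x\in X\colon f(x)\restriction n = g(x)\restriction n\}. \]
The sequence $(A_n)$ is decreasing and $\bigcap_n A_n=\{x\in X\colon f(x)=g(x)\}$, which has measure zero by hypothesis, hence $\mu(A_n)\to 0$.

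Given $\varepsilon>0$, I would fix any $\delta$ with $0<\delta<4\varepsilon$ and choose $n$ so large that $\mu(A_n)<\delta$. Set $Y=X\setminus A_n$, so $\mu(Y)>1-\delta$. On $Y$ we have $f_n(x)\neq g_n(x)$ for every $x$, so we may apply Proposition \ref{malowanie-plotu-new} to the measure space $(Y,\Sigma\restriction Y,\mu\restriction Y)$ (with total mass $\mu(Y)$) and the simple measurable functions $f_n\restriction Y$ and $g_n\restriction Y$. This produces $L\in\Sigma$, $L\subseteq Y$, with
\[ \mu(L)\geq \mu(Y)/4 \geq (1-\delta)/4 > 1/4-\varepsilon \]
and $f_n[L]\cap g_n[L]=\emptyset$.

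The final observation, which is the whole point of working with the truncations, is that $f_n[L]\cap g_n[L]=\emptyset$ automatically upgrades to $f[L]\cap g[L]=\emptyset$: if there were $x,y\in L$ with $f(x)=g(y)$, then truncating both sides to the first $n$ coordinates would yield $f_n(x)=g_n(y)$, contradicting disjointness of $f_n[L]$ and $g_n[L]$. The main (mild) subtlety is therefore just bookkeeping the quantifiers: the bound $1/4$ in Proposition \ref{malowanie-plotu-new} is applied to a subspace $Y$ of measure slightly less than $1$, and the $\varepsilon$-loss in the conclusion exactly absorbs this defect---no deeper tool than the discrete proposition and an approximation by cylinders is needed.
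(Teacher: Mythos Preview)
Your proof is correct and follows essentially the same approach as the paper's: approximate $f,g$ by their truncations to level $n$, throw away the small set where the truncations agree, apply Proposition \ref{malowanie-plotu-new} on the remainder, and observe that disjointness of the truncated images implies disjointness of the original images. The only cosmetic difference is that the paper routes the truncations back into $2^\omega$ by assigning to each $s\in 2^n$ a fixed representative branch, whereas you simply let the truncations take values in the finite set $2^n$; both are valid targets for Proposition \ref{malowanie-plotu-new}.
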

\begin{proof}
	For every $s\in 2^{<\omega}$ let $s_0 \in 2^\omega$ be the leftmost branch with root $s$ (think of $s_0$ as of a fixed representative of the clopen $\{x\in 2^\omega\colon s \subseteq x\}$). For $n\in \omega$ define the simple measurable functions $f_n,g_n\colon X \to K$ by putting $f_n(x) = s_0$ and $g_n(x)=t_0$, where $s=f(x)\restriction n$ and $t=g(x)\restriction n$. 

 For every $n\in\omega$ put $A_n =\big\{x\in X\colon f_n(x) = g_n(x)\big\}$ and note that $A_{n+1}\subseteq A_n$. It follows that $\mu\big(A_{n_0}\big)<\varepsilon$ for some $n_0\in\omega$. Indeed, if $\mu(A_n)\ge\varepsilon$ for every $n\in \omega$, then $\mu(\bigcap_n A_n) \geq \varepsilon$, and since $f(x) = g(x)$ for every  $x\in \bigcap_n A_n$, we get a contradiction with the assumption that $f\neq g$ $\mu$-almost everywhere.

Let $X_0 = X \setminus A_{n_0}$, so $\mu(X_0)>1-\varepsilon$ and for every $x\in X_0$ we have $f_n(x)\neq g_n(x)$. By Lemma \ref{malowanie-plotu-new} there is a measurable subset $L\subseteq X_0$ such that $\mu(L)\ge1/4\mu(X_0) > 1/4-\varepsilon$ and $f_n[L] \cap g_n[L] = \emptyset$. It follows that for every $x,x'\in L$ we have $f_n(x)\neq g_n(x')$ and hence $f(x)\neq g(x')$, so $L$ is as desired.
\end{proof}


\begin{prop}\label{14} Let $\mathbb{A}$ be a countable Boolean algebra. Assume that $\varphi\colon \mathbb{A} \to \mathbb{M}_\kappa$ and $\psi\colon \mathbb{A} \to \mathbb{M}_\kappa$ are such homomorphisms that $\bigvee\big\{\varphi(A)\triangle \psi(A)\colon A\in \mathbb{A}\big\} = 1$. Then, for each
	$\varepsilon>0$ there is $C\in
	\mathbb{A}$ such that $\lambda_\kappa\big( \varphi(C) \triangle \psi(C)\big)>1/4-\varepsilon$.
\end{prop}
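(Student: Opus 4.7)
The plan is to reduce the statement to Lemma \ref{1/4} applied to the dual continuous maps $f_\varphi,f_\psi\colon St(\mathbb{M}_\kappa)\to St(\mathbb{A})$, where $St(\mathbb{M}_\kappa)$ is equipped with the Radon measure $\widehat{\lambda}_\kappa$. Since $\mathbb{A}$ is countable, $St(\mathbb{A})$ is a compact metrizable zero-dimensional space, hence identifiable with a closed subset of $2^\omega$, so the hypotheses of Lemma \ref{1/4} on the target will be met. Both $f_\varphi$ and $f_\psi$ are continuous and hence measurable.

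The first substantive step is to translate the algebraic assumption $\bigvee\{\varphi(A)\triangle\psi(A)\colon A\in\mathbb{A}\}=1_{\mathbb{M}_\kappa}$ into the statement $f_\varphi\ne f_\psi$ $\widehat{\lambda}_\kappa$-almost everywhere. Writing $\widehat{B}$ for the clopen in $St(\mathbb{M}_\kappa)$ corresponding to $B\in\mathbb{M}_\kappa$, a countable supremum in $\mathbb{M}_\kappa$ agrees with the union of the corresponding clopens up to a $\widehat{\lambda}_\kappa$-null set (this is where $\sigma$-completeness of $\mathbb{M}_\kappa$ together with countability of $\mathbb{A}$ is used). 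Therefore
\[\widehat{\lambda}_\kappa\Big(\bigcup_{A\in\mathbb{A}}\widehat{\varphi(A)\triangle\psi(A)}\Big)=\widehat{\lambda}_\kappa(\widehat{1_{\mathbb{M}_\kappa}})=1.\]
Unwinding the definition of $f_\varphi$ and $f_\psi$, a point $x\in St(\mathbb{M}_\kappa)$ belongs to $\widehat{\varphi(A)\triangle\psi(A)}$ iff $A\in f_\varphi(x)\triangle f_\psi(x)$, so the displayed union is precisely $\{x\colon f_\varphi(x)\ne f_\psi(x)\}$, which hence has full measure. I view this translation step as the main point requiring care; without countability of $\mathbb{A}$ the argument would fail, since an uncountable supremum could be $1$ while the corresponding union of clopens has small Radon measure.

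Now Lemma \ref{1/4} furnishes, for any given $\varepsilon>0$, a measurable set $L\subseteq St(\mathbb{M}_\kappa)$ with $\widehat{\lambda}_\kappa(L)>1/4-\varepsilon/2$ and $f_\varphi[L]\cap f_\psi[L]=\emptyset$. By inner regularity of the Radon measure $\widehat{\lambda}_\kappa$, I would replace $L$ by a compact subset $L'\subseteq L$ still satisfying $\widehat{\lambda}_\kappa(L')>1/4-\varepsilon$. Then $f_\varphi[L']$ and $f_\psi[L']$ are disjoint compact subsets of the compact zero-dimensional space $St(\mathbb{A})$, so a standard argument (cover $f_\varphi[L']$ by finitely many clopens missing $f_\psi[L']$) yields a clopen $C\in\mathbb{A}$ with $f_\varphi[L']\subseteq C$ and $f_\psi[L']\cap C=\emptyset$. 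Consequently $L'\subseteq f_\varphi^{-1}[C]\triangle f_\psi^{-1}[C]=\varphi(C)\triangle\psi(C)$, giving
\[\lambda_\kappa\big(\varphi(C)\triangle\psi(C)\big)\ge\widehat{\lambda}_\kappa(L')>1/4-\varepsilon,\]
as required.
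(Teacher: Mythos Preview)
Your proof is correct and follows the same route as the paper's: pass to the dual maps $f_\varphi,f_\psi\colon St(\mathbb{M}_\kappa)\to St(\mathbb{A})$, show they differ $\widehat{\lambda}_\kappa$-almost everywhere, apply Lemma~\ref{1/4}, pass to a compact subset, and separate the disjoint compact images by a clopen of $St(\mathbb{A})$. The only variations are in the a.e.\ step---the paper argues via normality of $\widehat{\lambda}_\kappa$ (nowhere dense sets are null) rather than via your countable-supremum argument, so that step in the paper does not itself use countability of $\mathbb{A}$---and at the very end, where the paper further shrinks the compact set to a clopen $V\in\mathbb{M}_\kappa$ before separating, a step your version shows can be omitted.
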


\begin{proof}
	Let $X$ and $Y$ be the Stone spaces of $\mathbb{M}_\kappa$ and $\mathbb{A}$, respectively. In what follows, we will treat $\lambda_\kappa$ as a Radon measure on $X$. The functions $f_\varphi,f_\psi\colon X\to Y$ are $\lambda_\kappa$-measurable. We claim that $f_\varphi \ne f_\psi$ $\lambda_\kappa$-almost everywhere. If not, then there is a Borel subset $B\subseteq X$ of positive measure such that $f_\varphi\restriction B = f_\psi\restriction B$. By the regularity of $\lambda_\kappa$, there is a compact subset $K$ of $X$ such that $K\subseteq B$ and $\lambda_\kappa(K)>0$. Recall that $\lambda_\kappa$ is a \textit{normal} measure, i.e. for every nowhere dense subset $Z$ of $X$ we have $\lambda_\kappa(Z)=0$, so $K$ cannot be nowhere dense and thus there is $U\in\mathbb{M}_\kappa$ such that $U\subseteq K$ and $\lambda_\kappa(U)>0$. But for every $A\in \mathbb{A}$ we have
$(\varphi(A)\setminus \psi(A)) \cap U=0$ (since otherwise for each $x\in (\varphi(A)\setminus \psi(A)) \cap U$ we would have $f_\varphi(x)\in A$ and $f_\psi(x)\notin A$ and so $f_\varphi(x)\ne f_\psi(x)$ despite the fact that $x\in B$) and $(\psi(A)\setminus \varphi(A)) \cap U=0$ (by a similar argument). This is a contradiction with the assumption that $\bigvee\big\{\varphi(A)\triangle \psi(A)\colon A\in \mathbb{A}\big\} = 1$.

Now fix $\varepsilon>0$ and use Lemma \ref{1/4} to find a measurable subset $L \subseteq X$ such that $\lambda_\kappa(L)>1/4-\varepsilon$ and $f_\varphi[L] \cap f_\psi[L] = \emptyset$ (notice that $Y$ may be treated as a closed subset of $2^\omega$). Using the regularity of $\lambda_\kappa$, we may find a compact subset $M$ of $X$ such that $M\subseteq L$ and $\lambda_\kappa(M)>1/4-\varepsilon$. By the normality of $\lambda_\kappa$, the boundary of $M$ has measure $0$, so there is $V\in\mathbb{M}_\kappa$ such that $V\subseteq M$ and $\lambda_\kappa(V)>1/4-\varepsilon$. Let $C$ be a clopen in $Y$
	separating $f_\varphi[V]$ and $f_\psi[V]$. Then, $V \subseteq \varphi(C)$ but $V \cap \psi(C) = \emptyset$, and hence $\lambda( \varphi(C) \setminus \psi(C))>1/4-\varepsilon$. 
\end{proof}


\begin{proof}[\textbf{Proof of Theorem \ref{distinct}}] 

	Since $\mathbb{M}_\kappa$ is a ccc poset, there are a sequence $(A_n)$ in $\mathbb{A}$ and a maximal antichain $(p_n)$ in $\mathbb{M}_\kappa$ such that $p_n\le\varphi_{\dot{\mathcal{U}}}(A_n)\triangle\varphi_{\dot{\mathcal{V}}}(A_n)$ for every $n\in\omega$. It follows that
\[\bigvee\big\{\varphi_{\dot{\mathcal{U}}}(A_n)\triangle\varphi_{\dot{\mathcal{V}}}(A_n)\colon\ n\in\omega\big\}=1.\]
Let $\mathbb{B}$ be the subalgebra of $\mathbb{A}$ generated by the set $\{A_n\colon\ n\in\omega\}$. By Proposition \ref{14}, there is $C$ in $\mathbb{B}$ (and hence in $\mathbb{A}$) such that $\lambda_\kappa\big(\varphi_{\dot{\mathcal{U}}}(C)\triangle\varphi_{\dot{\mathcal{V}}}(C)\big)>1/4-\varepsilon$. Put $p=\varphi_{\dot{\mathcal{U}}}(C)\triangle\varphi_{\dot{\mathcal{V}}}(C)$ to finish the proof.
\end{proof}

Finally, we are in the position to prove the main theorem of this section.

\begin{proof}[\textbf{Proof of Theorem \ref{uniform-convergence2}}]
%

	Since $(\varphi_n)$ converges uniformly to $\varphi$, there is $m\in\omega$ such that for each $n>m$ and $A\in\mathbb{A}$ we have $\lambda_\kappa\big(\varphi_n(A) \triangle \varphi(A)\big) < 1/5$. We get that there is $m\in\omega$ such that for each
	$n>m$ there are no $p\in\mathbb{M}_\kappa$ and no $A\in\mathbb{A}$ such that $p\le\varphi_n(A) \triangle \varphi(A)$ and $\lambda_\kappa(p)\ge1/5$, which, by Theorem \ref{distinct}, means that for every $n>m$ we have a non-zero $p_n \in \mathbb{M}_\kappa$ such
	that 
	$p_n \Vdash_{\mathbb{M}_\kappa}\dot{\mathcal{U}}_n = \dot{\mathcal{U}}$. 

\end{proof}




Surprisingly, in general we cannot hope to find anything better than the constant $1/4$ in Proposition \ref{malowanie-plotu-new} and in Theorem \ref{distinct}.

\begin{lem}\label{1/4epsilon}
For each non-zero even $n\in\omega$ put $X_n=\{1,2,\ldots,n\cdot(n-1)\}$ and let $\mu_n$ be such that $\mu_n(\{x\})=1/|X_n|$ for each $x\in X_n$. Let $f_n,g_n\colon X_n\to\omega$ be such functions that for every distinct $a,b\in\{1,\ldots,n\}$ there is $x\in X_n$ for which we have $f_n(x)=a$ and $g_n(x)=b$. Then, for each subset $L$ of $X_n$ such that $f_n[L]\cap g_n[L]=\emptyset$ we have:
\[\mu_n(L)\le\frac{1}{4}+\frac{1}{16n-4}.\]
\end{lem}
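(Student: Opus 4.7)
The plan is to reduce this apparently analytic statement to elementary combinatorics by exploiting the tight cardinality match in the hypothesis. First I note that $|X_n| = n(n-1)$ coincides exactly with the cardinality of the set $S = \{(a,b)\colon a, b \in \{1,\ldots,n\},\, a \ne b\}$. Since the hypothesis asserts that every element of $S$ is attained by $(f_n(x), g_n(x))$ for some $x \in X_n$, a pigeonhole count forces the map $x \mapsto (f_n(x), g_n(x))$ to be a bijection from $X_n$ onto $S$: no $x$ can afford to land on a diagonal $(a,a)$ or outside $\{1,\ldots,n\}^2$ without leaving some required pair uncovered.

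With this bijection in hand, $\mu_n$ transports to the uniform probability measure on $S$. For a candidate $L \subseteq X_n$ with $f_n[L] \cap g_n[L] = \emptyset$, I would set $A = f_n[L]$ and $B = g_n[L]$. Then $A$ and $B$ are disjoint subsets of $\{1,\ldots,n\}$, so $|A| + |B| \le n$, and each element of $L$ corresponds under the bijection to a pair in $A \times B$, giving $|L| \le |A|\cdot|B|$.

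What remains is the discrete optimisation of $|A|\cdot|B|$ subject to $|A|+|B| \le n$ with $|A|, |B|$ nonnegative integers. Since $n$ is even, the maximum is $(n/2)^2 = n^2/4$, attained at $|A| = |B| = n/2$ by AM--GM. Assembling the pieces,
\[\mu_n(L) \;\le\; \frac{n^2/4}{n(n-1)} \;=\; \frac{n}{4(n-1)} \;=\; \frac{1}{4} + \frac{1}{4(n-1)}, \]
which exhibits the required asymptotic form $\tfrac{1}{4} + O(1/n)$.

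The main obstacle I anticipate is matching the precise constant $\tfrac{1}{16n-4}$ announced in the statement: the natural bijection-plus-AM--GM route I have sketched delivers only the weaker error term $\tfrac{1}{4(n-1)}$, and tightening to the sharper form will likely require a more delicate accounting---perhaps by showing that extremal configurations $L = A \times B$ with $|A|=|B|=n/2$ cannot simultaneously be realised for all admissible $A, B$, or by exploiting that certain subsets of the grid $A \times B$ must be excluded to keep $f_n[L]$ and $g_n[L]$ mutually disjoint in a stronger sense than merely having $A \cap B = \emptyset$.
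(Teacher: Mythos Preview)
Your argument is essentially identical to the paper's: the paper also observes (via the same cardinality match) that each off-diagonal pair $(a,b)$ is hit exactly once, bounds $|L|\le k(n-k)$ where $k=|f_n[L]|$, maximises at $k=n/2$ to get $|L|\le n^2/4$, and divides by $|X_n|=n(n-1)$.

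Your concern about the gap between $\tfrac{1}{4(n-1)}$ and $\tfrac{1}{16n-4}$ is well-placed, but the resolution is not a more delicate argument---it is that the paper's final arithmetic is in error. The paper writes
\[
\frac{n^2}{4}\cdot\frac{1}{n(n-1)}=\frac{1}{4-\tfrac{1}{n}}=\frac{1}{4}+\frac{1}{16n-4},
\]
but the first equality should read $\dfrac{1}{4-4/n}$, which simplifies to exactly your $\tfrac{1}{4}+\tfrac{1}{4(n-1)}$. In fact the bound $\tfrac{1}{4}+\tfrac{1}{16n-4}$ as stated is false: for $n=2$ one has $X_2=\{1,2\}$, and taking $L=\{1\}$ (under the essentially unique bijection) gives $\mu_2(L)=\tfrac{1}{2}$, whereas $\tfrac{1}{4}+\tfrac{1}{28}=\tfrac{2}{7}<\tfrac{1}{2}$. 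Your bound $\tfrac{1}{4}+\tfrac{1}{4(n-1)}$ is the sharp one---it is attained by taking $L$ to be the preimage of $A\times B$ for any balanced bipartition $A\sqcup B=\{1,\ldots,n\}$---and it is all that the subsequent corollary requires, since only the fact that the error term tends to $0$ is used there. So there is no missing idea; do not look for one.
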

\begin{proof}
Let $L\subseteq X_n$ be a set such that $f_n[L]\cap g_n[L]=\emptyset$. If $|f_n[L]|=k$ for some $1\le k\le n$, then $|g_n[L]|\le n-k$. Since each pair $(a,b)\in\{1,\ldots,n\}^2$, $a\neq b$, occurs exactly once in the set $f_n[X_n]\times g_n[X_n]$, $L$ may have cardinality at most $k\cdot(n-k)$. The only maximum of the function $\rho_n\colon\mathbb{R}\to\mathbb{R}$ given by the formula $\rho_n(x)=x\cdot(n-x)$ occurs at $x=n/2$ and is equal to $n^2/4$. It follows that the maximal possible cardinality of $L$ is also $n^2/4$ and thus:
\[\mu_n(L)\le\frac{n^2}{4}\cdot\frac{1}{n(n-1)}=\frac{1}{4-\frac{1}{n}}=\frac{1}{4}+\frac{1}{16n-4}.\]
\end{proof}
%
%
%
%
	
\begin{cor} Let $\mathbb{A}$ be an infinite Boolean algebra. Then, for every $\varepsilon>0$ there are $\mathbb{M}_\kappa$-names $\dot{\mathcal{U}}$ and $\dot{\mathcal{V}}$ for ultrafilters on $\mathbb{A}$ having the following properties:
\begin{itemize}
	\item $\Vdash_{\mathbb{M}_\kappa} \dot{\mathcal{U}}\ne \dot{\mathcal{V}}$, and 
	\item for every $p\in\mathbb{M}_\kappa$ for which there is $A\in\mathbb{A}$ such that $p\Vdash A\in\dot{\mathcal{U}}\setminus\dot{\mathcal{V}}$ we have $\lambda_\kappa(p) \leq 1/4+\varepsilon$.
\end{itemize}
\end{cor}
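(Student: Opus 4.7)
The plan is to realise the extremal example from Lemma \ref{1/4epsilon} as a pair of Boolean homomorphisms into $\mathbb{M}_\kappa$. Given $\varepsilon>0$, I first fix an even $n$ with $1/(16n-4)<\varepsilon$; using that $\mathbb{A}$ is infinite, I choose a partition of unity $B_1,\dots,B_n\in\mathbb{A}$ into pairwise disjoint nonzero elements and pick ultrafilters $U_1,\dots,U_n$ on $\mathbb{A}$ with $B_i\in U_i$ (so $B_j\notin U_i$ whenever $j\neq i$). Since $\mathbb{M}_\kappa$ is atomless and carries a strictly positive measure, I also fix a partition $\{X_{a,b}\colon a,b\in\{1,\dots,n\},\ a\neq b\}$ of the element $1\in\mathbb{M}_\kappa$ with $\lambda_\kappa(X_{a,b})=1/(n(n-1))$ for every admissible pair.

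Next, I define $\varphi,\psi\colon\mathbb{A}\to\mathbb{M}_\kappa$ by
\[\varphi(A)=\bigvee\{X_{a,b}\colon A\in U_a\},\qquad \psi(A)=\bigvee\{X_{a,b}\colon A\in U_b\},\]
and let $\dot{\mathcal{U}}=\tau_\varphi$, $\dot{\mathcal{V}}=\tau_\psi$ in the sense of Section \ref{sec-homo-names}. That $\varphi$ and $\psi$ are homomorphisms follows by a direct calculation from the fact that the $X_{a,b}$'s partition $1$: joins, meets and complements in $\mathbb{A}$ translate into the corresponding operations in $\mathbb{M}_\kappa$ because the chosen pieces are pairwise disjoint and sum to the top element. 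Because $B_a\in U_a\setminus U_b$ whenever $a\neq b$, we have $X_{a,b}\le\varphi(B_a)\setminus\psi(B_a)$, and since every nonzero condition meets some $X_{a,b}$, this witnesses $\Vdash_{\mathbb{M}_\kappa}\dot{\mathcal{U}}\neq\dot{\mathcal{V}}$.

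For the measure bound, suppose $p\Vdash A\in\dot{\mathcal{U}}\setminus\dot{\mathcal{V}}$ for some $A\in\mathbb{A}$, and set $S=\{i\colon A\in U_i\}$. Then $p\le\varphi(A)\setminus\psi(A)=\bigvee\{X_{a,b}\colon a\in S,\ b\notin S\}$. I introduce simple measurable functions $f,g\colon 2^\kappa\to\{1,\dots,n\}$ by declaring $f(x)=a$ and $g(x)=b$ whenever $x\in X_{a,b}$. On $L=\varphi(A)\setminus\psi(A)$ one has $f[L]\subseteq S$ and $g[L]\subseteq S^c$, so $f[L]\cap g[L]=\emptyset$. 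The pair $(f,g)$ realises exactly the configuration of Lemma \ref{1/4epsilon} on an equiprobable partition into $n(n-1)$ pieces, every ordered pair with distinct coordinates being attained on a set of measure $1/(n(n-1))$. Rescaling the lemma to the probability space $(2^\kappa,\lambda_\kappa)$ then yields $\lambda_\kappa(L)\le 1/4+1/(16n-4)<1/4+\varepsilon$, and therefore $\lambda_\kappa(p)\le 1/4+\varepsilon$.

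I expect the only real work to be bookkeeping: verifying that $\varphi$ and $\psi$ preserve the Boolean operations, producing the equal-measure partition $(X_{a,b})$ inside $\mathbb{M}_\kappa$ (standard by atomlessness), and translating Lemma \ref{1/4epsilon} from the finite space $X_n$ with its uniform measure to $(2^\kappa,\lambda_\kappa)$. None of these steps involve new ideas; the combinatorial content of the corollary is entirely absorbed into Lemma \ref{1/4epsilon}, and the construction above is simply the natural way to feed that combinatorics into the forcing language.
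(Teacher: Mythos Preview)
Your proof is correct and follows essentially the same construction as the paper: partition $1_{\mathbb{M}_\kappa}$ into $n(n-1)$ equal-measure pieces indexed by ordered pairs $(a,b)$ with $a\neq b$, let the two names equal $\mathcal{U}_a$ and $\mathcal{U}_b$ respectively on the piece $X_{a,b}$, and bound $\lambda_\kappa(\llbracket A\in\dot{\mathcal{U}}\setminus\dot{\mathcal{V}}\rrbracket)=|S|(n-|S|)/(n(n-1))$ via Lemma~\ref{1/4epsilon}. The only differences are cosmetic---the paper indexes by an abstract set $X_n$ with the functions $f_n,g_n$ and argues the measure bound by contrapositive, and your extra choice of a partition $B_1,\dots,B_n$ of $1_{\mathbb{A}}$ (rather than merely $n$ distinct ultrafilters) is harmless but unnecessary.
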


\begin{proof}
	Let $\varepsilon>0$ and fix $n$ such that $\frac{1}{16n-4} < \varepsilon$. Let $(\mathcal{U}_i)_{i<n}$ be sequence of distinct ultrafilters on
$\mathbb{A}$ (in $V$). 

Let $X_n$, $f_n$, $g_n$ be as in Lemma \ref{1/4epsilon}. Fix a maximal antichain $\{P_k\colon k\in X_n\}$ in $\mathbb{M}_\kappa$ consisting of sets of measure $1/|X_n|$. For $i<n$ denote $F_i = \bigvee \{P_k\colon f(k)=i\}$ and $G_i =
\bigvee\{P_k \colon g(k) =i\}$. Notice that both $\{F_i\colon i<n\}$ and $\{G_i\colon i<n\}$ form maximal antichains in $\mathbb{M}_\kappa$ consisting of sets of measure $1/n$.

Now, let $\dot{\mathcal{U}}$ be an $\mathbb{M}_\kappa$-name for an ultrafilter on $\mathbb{A}$ such that $\llbracket \dot{\mathcal{U}} = \mathcal{U}_i \rrbracket = F_i$ for every $i<n$. In other words, $\mathbb{M}_\kappa$ forces that $\dot{\mathcal{U}}$ is one of the
$\mathcal{U}_i$'s and the elements of the antichain $\{F_i\colon i<n\}$ decides which of them. Similarly, choose an $\mathbb{M}_\kappa$-name $\dot{\mathcal{V}}$ in such a way that $\llbracket \dot{\mathcal{V}} = \mathcal{U}_i\rrbracket = G_i$. 
Then, for every $k$ \[ P_k \Vdash \dot{\mathcal{U}} = \mathcal{U}_{f(k)} \mbox{ and } \dot{\mathcal{V}} = \mathcal{U}_{g(k)}, \] 
and since $f(k)\ne g(k)$ we have $\Vdash_{\mathbb{M}_\kappa} \dot{\mathcal{U}} \ne \dot{\mathcal{V}}$.

Let $p\in \mathbb{M}_\kappa$ be such that $\lambda(p)\geq 1/4+\varepsilon$ and let $L = \big\{k\in X_n\colon P_k \wedge p \ne 0\big\}$. Then, $|L| \geq |X_n|(\frac{1}{4}+\varepsilon)$ and so, in terms of Lemma \ref{1/4epsilon},
\[ \mu_n(L)\geq \frac{1}{4} + \varepsilon> \frac{1}{4} + \frac{1}{16n-4}.\]
Thus, by Lemma \ref{1/4epsilon} we have $f[L] \cap g[L] \ne \emptyset$ and so there are $k,l \in L$ such that $f(k) = g(l) = i$ for some $i<n$. Then $P_k \land p$ and $P_l \land p$ are non-zero and
\[ P_k \land p \leq F_i \Vdash \dot{\mathcal{U}} = \mathcal{U}_i \]
but also
\[ P_l \land p \leq G_i \Vdash \dot{\mathcal{V}} = \mathcal{U}_i. \]

Hence, there is no $A\in \mathbb{A}$ such that $p\Vdash A\in \dot{\mathcal{U}}\setminus \dot{\mathcal{V}}$. 
\end{proof}

\begin{rem}\label{rem-malowanie-plotu}
	The core argument of Theorem \ref{distinct} is contained in Proposition \ref{malowanie-plotu-new}, which is a statement in finite combinatorics. It can be formulated in a popular way as	follows. Adam and Eve are painting a picket fence between
	their properties. The fence consists of $n$ many rails. Each rail has two	sides---Adam's and Eve's---and those sides have to be painted in such a way that Adam's side has different colour than Eve's. Proposition \ref{malowanie-plotu-new} says
	that no	matter how many colours they use, there is always a set $B$ of at least $n/4$ many rails with the following property: the set of Adam's colours	used in $B$ is disjoint with the set of Eve's colours used in $B$. (The translation of the
	above into the terms of Proposition \ref{malowanie-plotu-new} is as follows: $X$ is the set of rails, $K$ is the set of colors, $\mu$ is the counting measure, and the functions $f$ and $g$ assign colors to Adam's and Eve's sides of the fence.)
\end{rem}

\section{The Efimov problem}\label{secEfimov}




We now use results from the previous section to obtain a characterization of those Boolean algebras whose Stone spaces do not contain any non-trivial convergent sequences in random extensions.

Recall that a Boolean algebra $\mathbb{A}$ has Seever's \textit{interpolation property} (or, \textit{property (I)}) if for every sequences $(A_n)$ and $(B_n)$ in $\mathbb{A}$ such that $A_n\le B_m$ for every $n,m\in\omega$ there is $C\in\mathbb{A}$ such that $A_n\le C\le B_m$ for every $n,m\in\omega$ (see \cite{Seever}). It is immediate that a Boolean algebra $\mathbb{A}$ has the interpolation property if and only if its Stone space $St(\mathbb{A})$ is an F-space, so e.g. $\mathcal{P}(\omega)/Fin$ has the interpolation property as well as all $\sigma$-complete Boolean algebras do. The result of Dow and Fremlin mentioned in Introduction yields thus the following corollary.

\begin{thm}\cite[Corollary 2.3]{DowFremlin} \label{DowFremlin} 
Let $\mathbb{A}$ be a Boolean algebra in $V$. If $\mathbb{A}$ has the interpolation property in $V$, then in $V^{\mathbb{M}_\kappa}$ the space $\mathrm{St}(\mathbb{A})$ does not have non-trivial convergent sequences.
\end{thm}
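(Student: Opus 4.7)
The plan is to argue in $V[G]$ by contradiction. Suppose there is a convergent sequence $(x_n)_n \to x$ in $\mathrm{St}(\mathbb{A})$ that is not eventually constant; after passing to a subsequence I may assume the $x_n$ are pairwise distinct and all different from $x$. Since $\mathrm{St}(\mathbb{A})$ is totally disconnected, I would inductively choose clopen sets $C_n \in \mathbb{A}$ with $x_n \in C_n$ and $x, x_0, \ldots, x_{n-1} \notin C_n$, and then replace $C_n$ by $C_n \setminus (C_0 \cup \cdots \cup C_{n-1})$ to make the $C_n$ pairwise disjoint while still having $x_n \in C_n$. Splitting by parity,
\[ E = \bigcup_n C_{2n}, \qquad F = \bigcup_n C_{2n+1} \]
are disjoint open $F_\sigma$ (hence cozero) subsets of $\mathrm{St}(\mathbb{A})$ in $V[G]$, and because the subsequences $(x_{2n})$ and $(x_{2n+1})$ still converge to $x$, one has $x \in \overline{E} \cap \overline{F}$.

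If $\mathrm{St}(\mathbb{A})$ were still an F-space in $V[G]$, then disjoint cozero sets would have disjoint closures, giving an immediate contradiction. So the whole proof reduces to showing that the F-space property, i.e.\ the interpolation property of $\mathbb{A}$, is preserved by forcing with $\mathbb{M}_\kappa$. Concretely, I would try to verify that given sequences $(A_n), (B_n)$ in $\mathbb{A}$ in $V[G]$ with $A_n \leq B_m$ for all $n, m$, there exists $C \in \mathbb{A}$ with $A_n \leq C \leq B_m$. Pick names $\dot{A}_n, \dot{B}_n$ in $V$; by the countable chain condition of $\mathbb{M}_\kappa$, each name is forced to take values in a countable set $\mathcal{A}_n$, respectively $\mathcal{B}_n$, of ground-model elements of $\mathbb{A}$. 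The $\omega^\omega$-bounding property of random forcing would then let me reduce the question to interpolating ground-model sequences built from the $\mathcal{A}_n$ and $\mathcal{B}_n$, and apply interpolation of $\mathbb{A}$ in $V$ to produce a candidate $C \in \mathbb{A}$ that a dense set of conditions forces to work.

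The main obstacle I expect is precisely this last reduction: interpolation is trivial when $\mathbb{A}$ is $\sigma$-complete (just take suprema of $\mathcal{A}_n$ and infima of $\mathcal{B}_n$), but for general F-spaces we have no such joins in $\mathbb{A}$, and one must more carefully combine the countably many ``possible values'' of each $\dot{A}_n$ and $\dot{B}_n$ into a single ground-model interpolant that the generic will respect. This is the delicate combinatorial core of Dow and Fremlin's original argument, and it is the place where the measure-theoretic structure of $\mathbb{M}_\kappa$ (rather than just its being ccc and $\omega^\omega$-bounding) is most likely to enter essentially. Once that preservation is established, the two-paragraph F-space contradiction sketched above closes the argument.
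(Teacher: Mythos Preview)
First, note that the paper does not give its own proof of this statement: Theorem~\ref{DowFremlin} is simply quoted from \cite{DowFremlin} and then used as a black box to derive Corollary~\ref{alg_impl_unif}. So there is no ``paper's proof'' to compare your proposal against.

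That said, your proposed route has a genuine gap, and it is not the one you flagged. You claim that when $\mathbb{A}$ is $\sigma$-complete the preservation of interpolation is ``trivial: just take suprema of $\mathcal{A}_n$ and infima of $\mathcal{B}_n$''. This does not work. Even if $\mathbb{A}$ is $\sigma$-complete in $V$, the relevant sequences $(A_n),(B_n)$ live in $V[G]$, and there is no reason the supremum of a $V[G]$-sequence of elements of $\mathbb{A}$ should lie in $\mathbb{A}$. Your ccc reduction to countable sets $\mathcal{A}_n,\mathcal{B}_n\subseteq\mathbb{A}$ of possible values does not save this: the hypothesis $A_n\le B_m$ is only forced, so for particular $a\in\mathcal{A}_n$ and $b\in\mathcal{B}_m$ there need be no condition forcing both $\dot A_n=a$ and $\dot B_m=b$, and hence no reason for $a\le b$ to hold in $\mathbb{A}$. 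Consequently $\sup\bigcup_n\mathcal{A}_n\le\inf\bigcup_m\mathcal{B}_m$ can fail outright, and you have no candidate interpolant.

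More fundamentally, Dow and Fremlin do \emph{not} prove that $St(\mathbb{A})$ remains an F-space in $V[G]$; they argue directly with a name for a putative convergent sequence and use the measure structure of $\mathbb{M}_\kappa$ together with the ground-model F-space property to reach a contradiction. Your strategy of establishing full F-space preservation is strictly stronger than what is needed and, as far as I know, is not known to hold (and may well fail). So the ``delicate combinatorial core'' you anticipate is not just delicate---it is aiming at the wrong target.
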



Theorem \ref{DowFremlin} together with results obtained above allows us to bring down the question about convergence of ultrafilters in the random extensions to the question about convergence of homomorphisms. Note that if the Stone space $St(\mathbb{A})$ of a Boolean algebra $\mathbb{A}$ contains a non-trivial convergent sequence in $V$, then $St(\mathbb{A})$ contains such a sequence in $V^{\mathbb{M}_\kappa}$, too. The following theorem implies thus e.g. that every countable Boolean algebra admits a sequence of homomorphisms into $\mathbb{M}_\kappa$ which is pointwise algebraically convergent but not uniformly convergent (since $St(\mathbb{A})$ is metrizable; cf. the examples in Sections \ref{ex_alg_not_borel} and \ref{ex_borel_alg_not_unif}).  

\begin{thm}\label{translation} 
Let $\mathbb{A}$ be a Boolean algebra in $V$. The following are equivalent:
	\begin{enumerate}
		\item $\Vdash_{\mathbb{M}_\kappa}$ $St(\mathbb{A})$ does not have non-trivial convergent sequences;
		\item every pointwise algebraically convergent sequence of homomorphisms from $\mathbb{A}$ into $\mathbb{M}_\kappa$ converges  uniformly.
	\end{enumerate}
\end{thm}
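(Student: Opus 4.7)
The plan is to use the bridge between $\mathbb{M}_\kappa$-names for ultrafilters on $\mathbb{A}$ and elements of $\mathcal{H}(\mathbb{A},\mathbb{M}_\kappa)$ established in Section \ref{sec-homo-names}. The direction $(1)\Rightarrow(2)$ is essentially one sentence: if $(\varphi_n)$ is pointwise algebraically convergent to $\varphi$, then by Proposition \ref{convergence} the associated names satisfy $\Vdash_{\mathbb{M}_\kappa}\dot{\mathcal U}_n\to\dot{\mathcal U}$; assumption (1) then forces the sequence to be eventually constant, i.e.\ $\Vdash_{\mathbb{M}_\kappa}\forall^\infty n\ \dot{\mathcal U}_n=\dot{\mathcal U}$, and Proposition \ref{uniform-convergence} converts this immediately into uniform convergence of $(\varphi_n)$ to $\varphi$.

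For $(2)\Rightarrow(1)$ I would argue by contraposition. Suppose (1) fails: there exist $p_0\in\mathbb{M}_\kappa$ and names $(\dot{\mathcal U}_n)$, $\dot{\mathcal U}$ with $p_0\Vdash$ ``$(\dot{\mathcal U}_n)$ converges to $\dot{\mathcal U}$ but is not eventually constant''. The first step is to extract a ground-model-indexed forced subsequence that is forced to differ from the limit at every single coordinate: let $\dot n_k$ be the $\mathbb{M}_\kappa$-name for the least $n>k$ with $\dot{\mathcal U}_n\neq\dot{\mathcal U}$ (well-defined below $p_0$), and put $\dot{\mathcal W}_k=\dot{\mathcal U}_{\dot n_k}$, so that $p_0\Vdash\dot{\mathcal W}_k\to\dot{\mathcal U}$ and $p_0\Vdash\dot{\mathcal W}_k\neq\dot{\mathcal U}$ for every $k\in\omega$. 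The second step is to upgrade ``$p_0\Vdash$'' to ``$\Vdash$'', which is necessary to feed the construction into (2): fix any ultrafilter $\mathcal{V}$ on $\mathbb{A}$ in $V$ and mix, letting $\dot{\mathcal W}'_k$ and $\dot{\mathcal U}'$ equal their unprimed counterparts below $p_0$ and be the constant $\mathcal V$ above $\neg p_0$. Then $\Vdash\dot{\mathcal W}'_k\to\dot{\mathcal U}'$, so by Proposition \ref{convergence} the corresponding homomorphisms $(\psi'_k)$ converge pointwise algebraically to $\varphi'$, and (2) then yields uniform convergence.

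The contradiction now comes from Theorem \ref{distinct}. In the restricted forcing $\mathbb{M}_\kappa\upharpoonright p_0$, itself (a rescaling of) a measure algebra, we have $\Vdash\dot{\mathcal W}_k\neq\dot{\mathcal U}$ for every $k$; Theorem \ref{distinct} applied with $\varepsilon=1/8$ produces, for each $k$, a condition $p\leq p_0$ with $\lambda_\kappa(p)>\lambda_\kappa(p_0)/8$ and an element $C_k\in\mathbb{A}$ such that $p\leq\psi_k(C_k)\triangle\varphi(C_k)$, where $\psi_k,\varphi$ are the homomorphisms attached to the unprimed names. Since $p\leq p_0$, the same inequality transfers to the primed versions, $p\leq\psi'_k(C_k)\triangle\varphi'(C_k)$, whence $\sup_{A\in\mathbb{A}}\lambda_\kappa(\psi'_k(A)\triangle\varphi'(A))>\lambda_\kappa(p_0)/8$ for every $k$, contradicting uniform convergence of $(\psi'_k)$ to $\varphi'$.

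The main obstacle is precisely the final step, i.e.\ the passage from a Boolean-valued disagreement of ultrafilter names to a disagreement in the pointwise metric whose measure is bounded uniformly away from zero. Without the forced-subsequence trick of Step 1, one is left only with the fact that $\bigvee_{A}(\varphi'_n(A)\triangle\varphi'(A))$ has measure $\lambda_\kappa(p_0)$ for cofinally many $n$, which is too weak, because in a measure algebra the measure of a join can vastly exceed the supremum of the individual measures of the joined elements. It is exactly the ``$1/4$ of $1$'' phenomenon captured by Theorem \ref{distinct} that turns the blanket inequality $\Vdash\dot{\mathcal W}_k\neq\dot{\mathcal U}$ into a single witness of large measure, and this is why that theorem is essential here. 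A secondary but routine check is that the atomless hypothesis in Theorem \ref{distinct} is not really an obstruction in the present application (Proposition \ref{14}, which does the work, is stated for countable $\mathbb{A}$ without atomlessness), and the trivial case of finite $\mathbb{A}$ should be treated separately, since then both (1) and (2) hold vacuously.
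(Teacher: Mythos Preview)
Your proof is correct and follows essentially the same route as the paper's. The direction $(1)\Rightarrow(2)$ is verbatim the paper's argument. For $(2)\Rightarrow(1)$, the paper also argues by contraposition and also reduces to a sequence of names forced to converge with every term distinct from the limit; the only cosmetic differences are that (i) the paper disposes of the condition $p_0$ by invoking the homogeneity of $\mathbb{M}_\kappa$ (i.e.\ $\mathbb{M}_\kappa\cong p_0\wedge\mathbb{M}_\kappa$, so one may assume $p_0=1$) rather than your mixing trick, and (ii) the paper quotes Theorem~\ref{uniform-convergence2} as a black box for the final contradiction, whereas you unwrap it to its core ingredient, Theorem~\ref{distinct}. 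Your explicit handling of the subsequence extraction and of the atomlessness hypothesis in Theorem~\ref{distinct} are points the paper leaves implicit.
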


\begin{proof} (1)$\implies$(2). Suppose that $(\varphi_n)$ in $\mathcal{H}(\mathbb{A},\mathbb{M}_\kappa)$ converges pointwise algebraically to some homomorphism $\varphi$. Then, by Proposition \ref{convergence}, $\Vdash_{\mathbb{M}_\kappa}$``$\tau_{\varphi_n}$ converges to $\tau_{\varphi}$''. By the assumption, $\Vdash_{\mathbb{M}_\kappa}\forall^\infty n\in\omega\ \tau_{\varphi_n}=\tau_{\varphi}$, and so, by Proposition \ref{uniform-convergence}, $(\varphi_n)$ converges uniformly to $\varphi$.

	(2)$\implies$(1). Suppose now that there exist $p\in \mathbb{M}_\kappa$, and $\mathbb{M}_\kappa$-names $(\dot{\mathcal{U}}_n)$, $\dot{\mathcal{U}}$ for ultrafilters on $\mathbb{A}$ such that $p \Vdash_{\mathbb{M}_\kappa}``(\dot{\mathcal{U}}_n)$
	converges to $\dot{\mathcal{U}}$ and $\forall n \ \dot{\mathcal{U}}_n \ne \dot{\mathcal{U}}$''. Since $\mathbb{M}_\kappa$ is isomorphic to the restricted forcing $p\wedge\mathbb{M}_\kappa=\big\{q\in\mathbb{M}_\kappa\colon q\le p\big\}$ and hence $\mathbb{M}_\kappa$-generic extensions of $V$ are the same as $(p\wedge\mathbb{M}_\kappa)$-generic extensions, we may assume that $p=1$.

	By Proposition \ref{convergence}, the sequence $(\varphi_{\dot{\mathcal{U}}_n})$ converges pointwise algebraically to $\varphi_{\dot{\mathcal{U}}}$ and hence,  by the assumption, uniformly. Theorem \ref{uniform-convergence2} implies that there is
	$n$ and a non-zero $p\in \mathbb{M}_\kappa$ such that $p \Vdash \dot{\mathcal{U}}_n=\dot{\mathcal{U}}$, a contradiction. It follows that there are no non-trivial convergent sequences in $St(\mathbb{A})\cap V^{\mathbb{M}_\kappa}$ .
\end{proof}

Since a compact space containing a copy of $\beta\omega$ must necessarily have weight at least $2^\omega$ and the forcing $\mathbb{M}_\kappa$ preserves cardinals, we immediately get the following corollary. 

\begin{cor}
Assume that $\mathbb{A}\in V$ is a Boolean algebra of size $<\kappa$ and such that in $V$ every pointwise algebraically convergent sequence in $\mathcal{H}(\mathbb{A},\mathbb{M}_\kappa)$ is uniformly convergent. Then, in $V^{\mathbb{M}_\kappa}$, the Stone space of $\mathbb{A}$ is an Efimov space.
\end{cor}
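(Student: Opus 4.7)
The plan is to verify the three defining properties of an Efimov space for $St(\mathbb{A})$ in the random extension $V^{\mathbb{M}_\kappa}$: that it is infinite compact Hausdorff, that it contains no non-trivial convergent sequences, and that it contains no copy of $\beta\omega$. I tacitly assume $\mathbb{A}$ is infinite (otherwise $St(\mathbb{A})$ is finite and the conclusion is vacuous; note this forces $\kappa$ to be uncountable, since $|\mathbb{A}|<\kappa$).

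Compactness and infiniteness of $St(\mathbb{A})$ in $V^{\mathbb{M}_\kappa}$ are immediate, since $\mathbb{A}$ remains an infinite Boolean algebra of the same cardinality in the extension. For the non-existence of non-trivial convergent sequences I will simply invoke Theorem \ref{translation}: the hypothesis of the corollary is exactly clause (2) of that theorem, so clause (1) yields $\Vdash_{\mathbb{M}_\kappa}``St(\mathbb{A})$ has no non-trivial convergent sequences''. This is the only place where the assumption on $\mathcal{H}(\mathbb{A},\mathbb{M}_\kappa)$ is used.

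The third property is where the cardinal assumption $|\mathbb{A}|<\kappa$ enters. First, I will note that $\mathbb{M}_\kappa$ is ccc, hence preserves cardinals, so $|\mathbb{A}|<\kappa$ continues to hold in $V^{\mathbb{M}_\kappa}$. The weight of the Stone space of a (at least countable) Boolean algebra equals its cardinality, so $w(St(\mathbb{A}))=|\mathbb{A}|<\kappa$ in the extension. Next, $\mathbb{M}_\kappa$ adds a generic point of $2^\kappa$, and projecting to countable coordinate-blocks produces $\kappa$ many distinct random reals, so in $V^{\mathbb{M}_\kappa}$ we have $2^\omega\ge\kappa$. On the other hand, $w(\beta\omega)=2^\omega$, and for compact Hausdorff spaces weight does not increase when passing to closed subspaces (any base of the ambient space restricts to a base of the subspace). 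Therefore a copy of $\beta\omega$ inside $St(\mathbb{A})$ would force $w(St(\mathbb{A}))\ge 2^\omega\ge\kappa$, contradicting $w(St(\mathbb{A}))<\kappa$.

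Combining these three ingredients gives that $St(\mathbb{A})$ is an Efimov space in $V^{\mathbb{M}_\kappa}$. The argument is essentially a bookkeeping exercise once Theorem \ref{translation} is available; the only non-trivial point is recognising that the cardinal hypothesis $|\mathbb{A}|<\kappa$ together with the jump in $2^\omega$ produced by $\mathbb{M}_\kappa$ is exactly what rules out copies of $\beta\omega$ via the weight comparison. No separate obstacle remains.
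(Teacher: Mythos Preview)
Your proof is correct and follows essentially the same route as the paper: invoke Theorem \ref{translation} for the absence of convergent sequences, and rule out copies of $\beta\omega$ via the weight comparison $w(St(\mathbb{A}))=|\mathbb{A}|<\kappa\le 2^\omega=w(\beta\omega)$ in the extension. You make explicit the step $2^\omega\ge\kappa$ in $V^{\mathbb{M}_\kappa}$, which the paper's one-line justification leaves tacit but is indeed needed for the argument to go through.
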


The theorem of Dow and Fremlin yields the following corollary.

\begin{cor}\label{alg_impl_unif} 
If $\mathbb{A}$ has the interpolation property, then every pointwise algebraically convergent sequence of homomorphisms in $\mathcal{H}(\mathbb{A},\mathbb{M}_\kappa)$ converges uniformly.
In particular, this holds if $\mathbb{A}$ is $\sigma$-complete or $\mathbb{A}=\mathcal{P}(\omega)/Fin$.
\end{cor}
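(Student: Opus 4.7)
The plan is to combine the two main theorems from this section, namely Theorem \ref{DowFremlin} (Dow--Fremlin) and Theorem \ref{translation}, which together yield the corollary almost immediately. There is no need for any new argument: the work has already been done, and this corollary simply records the synthesis.

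First I would invoke Theorem \ref{DowFremlin}: since $\mathbb{A}$ has the interpolation property in $V$, it follows that $\Vdash_{\mathbb{M}_\kappa}$ ``$St(\mathbb{A})$ has no non-trivial convergent sequences''. This is condition (1) of Theorem \ref{translation}. Hence, by the implication (1)$\implies$(2) of Theorem \ref{translation}, every pointwise algebraically convergent sequence of homomorphisms from $\mathbb{A}$ into $\mathbb{M}_\kappa$ converges uniformly, which is precisely the conclusion.

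For the ``in particular'' clause, I would appeal to the remarks made right before the statement of Theorem \ref{DowFremlin}: a Boolean algebra $\mathbb{A}$ has the interpolation property if and only if $St(\mathbb{A})$ is an F-space, and it is a standard fact that both $\sigma$-complete Boolean algebras and $\mathcal{P}(\omega)/\mathit{Fin}$ have Stone spaces which are F-spaces (equivalently, they satisfy Seever's property (I)). Hence the general statement applies in these two cases.

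The proof is therefore essentially a two-line concatenation, and there is no real obstacle; the corollary is included to emphasize that the abstract equivalence of Theorem \ref{translation}, combined with Dow and Fremlin's forcing-theoretic result, produces a concrete and somewhat surprising \emph{ZFC} statement about sequences of homomorphisms into measure algebras, without any further reference to forcing. The only thing worth double-checking is that one does not need any additional hypothesis (such as an upper bound on $|\mathbb{A}|$) in Theorem \ref{DowFremlin}, but this is already guaranteed by its formulation as stated.
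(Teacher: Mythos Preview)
Your proposal is correct and matches the paper's approach exactly: the paper presents this as an immediate corollary of Theorem \ref{DowFremlin} (Dow--Fremlin) combined with the implication (1)$\Rightarrow$(2) of Theorem \ref{translation}, with the ``in particular'' clause following from the remarks preceding Theorem \ref{DowFremlin}.
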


Recall that in Section \ref{ex_borel_alg_not_unif} we presented an example of a pointwise algebraic convergent sequence of homomorphisms from the Cantor algebra $\mathbb{C}$ to $\mathbb{M}$ which is not uniformly convergent---this example, as contradicting the conclusion of Corollary \ref {alg_impl_unif}, shows that a Boolean algebra $\mathbb{A}$ in the corollary cannot be to simple.


\section{Open questions and problems}

Fix two Boolean algebras $\mathbb{A}$ and $\mathbb{B}$ and an infinite cardinal number $\kappa$. In this final section we briefly list several open questions and problems concerning the space $\mathcal{H}(\mathbb{A},\mathbb{B})$. 

\begin{que}
What topological properties does the space $\mathcal{H}(\mathbb{A},\mathbb{B})$ have when endowed with the topologies defined in Section \ref{sec-convergence}? In particular, what properties does the space $\mathcal{H}(\mathbb{M}_\kappa,\mathbb{M}_\kappa)$ have?
\end{que}

The following two questions seem natural in the context of Proposition \ref{weakstar_cont} and Corollary \ref{convergences_measures_implications}.

\begin{que}
Is the function $F_\mu$ from Proposition \ref{weakstar_cont} continuous if the spaces $\mathcal{H}(\mathbb{A},\mathbb{B})$ and $P(St(\mathbb{A}))$ are endowed with the pointwise Borel metric topology and weak topology, respectively? 
\end{que}

\begin{prob}
Let $\mathbb{A}$ be a Boolean algebra and $\kappa$ a cardinal. Is there a natural convergence on the space $P(St(\mathbb{M}_\kappa))$  which corresponds to the pointwise algebraic convergence in $\mathcal{H}(\mathbb{A},\mathbb{M}_\kappa)$ in the sense of Corollary \ref{convergences_measures_implications}?
\end{prob}

The following question concerns possible extensions of the diagram from the beginning of Section \ref{sec-examples}.

\begin{que}\label{que-one_conv_another_conv}
What algebraic or structural properties of the Boolean algebras $\mathbb{A}$ and $\mathbb{B}$ imply that a sequence in $\mathcal{H}(\mathbb{A},\mathbb{B})$ convergent in one topology is also convergent in some other one?  
\end{que}

In Section \ref{sec-examples} we have presented several examples of sequences of homomorphisms witnessing that the convergence in one topology may not be sufficient to imply the convergence in some other one. We believe that some of those examples may be generalized to work in more generic situations.

\begin{prob}

Provide a general scheme for constructing a definable sequence of homomorphisms in $\mathcal{H}(\mathbb{A},\mathbb{B})$ such that it is convergent in one topology but not in the other one, or a general scheme for constructing a Borel set or a sequence of Borel sets in $St(\mathbb{A})$ witnessing that a given sequence of homomorphisms in $\mathcal{H}(\mathbb{A},\mathbb{B})$ is not convergent in some topology. 

\end{prob}

By the Dow--Fremlin theorem and Theorem \ref{translation} it follows that if $\mathbb{A}$ has the interpolation property, then every pointwise algebraic sequence in $\mathcal{H}(\mathbb{A},\mathbb{M}_\kappa)$ is uniformly convergent (Corollary \ref{alg_impl_unif}). Since the interpolation property implies the Grothendieck property, we pose the following question being a particular case of Question \ref{que-one_conv_another_conv}.

\begin{que}\label{que-alg_conv_unif}
Assume that $\mathbb{A}$ has the (positive) Grothendieck property. Is every pointwise algebraic convergent sequence in $\mathcal{H}(\mathbb{A},\mathbb{M}_\kappa)$ also uniformly convergent?
\end{que}

Note that, again by Theorem \ref{translation}, obtaining an algebraic or topological proof for an affirmative answer to Question \ref{que-alg_conv_unif} would yield an alternative proof to the theorem of Dow and Fremlin.

\bibliographystyle{alpha}
\bibliography{bib-measures}

\end{document}